\begin{document}



\numberwithin{equation}{section}

\newtheorem{definicion}{Definition}[section]
\newtheorem{Definition}[definicion]{Definition}
\newtheorem{definition}[definicion]{Definition}
\newtheorem{Lemma}[definicion]{Lemma}
\newtheorem{Notation}[definicion]{Notation}
\newtheorem{Theorem}[definicion]{Theorem}
\newtheorem{theorem}[definicion]{Theorem}
\newtheorem{Corollary}[definicion]{Corollary}
\newtheorem{corollary}[definicion]{Corollary}
\newtheorem{lema}[definicion]{Lemma}
\newtheorem{notacion}[definicion]{Notation}
\newtheorem{teorema}[definicion]{Theorem}
\newtheorem{corolario}[definicion]{Corollary}
\newtheorem{ejemplo}[definicion]{Example}
\newtheorem{observacion}[definicion]{Observation}
\newtheorem{proposicion}[definicion]{Proposition}
\newtheorem{Remark}[definicion]{Remark}
\newenvironment{Proof}{\noindent\bf Proof \rm}{$\hfill
\square$}

\newcommand{\debaj}[2]{ #1 \to_H #2}

 \newcounter{thlistctr}
 \newenvironment{thlist}{\
 \begin{list}%
 {\alph{thlistctr}}%
 {\setlength{\labelwidth}{2ex}%
 \setlength{\labelsep}{1ex}%
 \setlength{\leftmargin}{6ex}%
 \renewcommand{\makelabel}[1]{\makebox[\labelwidth][r]{\rm (##1)}}%
 \usecounter{thlistctr}}}%
 {\end{list}}

\title[Unorthodox algebras and their logics]{Unorthodox algebras and their associated unorthodox logics}

\author{Hanamantagouda P. Sankappanavar}

\date{}

\address{\llap{**\,}Department of Mathematics\\
                              State University of New York\\
                              New Paltz, New York, 12561\\
                              U.S.A.}

\email{sankapph@hawkmail.newpaltz.edu}

\keywords{De Morgan semi-Heyting algebra, unorthodox algebra, regular unorthodox algebra,
De Morgan semi-Heyting logic, regular unorthodox logic}  
\subjclass[2000]{$Primary:03G25, 06D20, 06D15;$  $Secondary:08B26, 08B15$}

\date{}
\begin{abstract}
This paper grew out of our investigation into a simple, but natural, question:   
Can ``$F \to T$'' be different from ``F'' and ``T''?
To this end, we introduce five ``unorthodox'' algebras 
 that will play a major role, not only in providing 
a positive answer to the question, but also in their similarity to the 2-element Boolean algebra $\mathbf{2}$.  Yet, they are remarkably different from $\mathbf{2}$ in many respects.
In this paper, we will examine these five algebras both algebraically and logically.   
We define, and initiate an investigation 
into, a subvariety, called $\mathbb{RUNO}1$, of the variety of De Morgan semi-Heyting algebras and show that $\mathbb{RUNO}1$ is, in fact, the variety generated by the five algebras.  
Then several applications of this theorem are given. It is shown that $\mathbb{RUNO}1$ is a discriminator variety and that all five algebras are primal.  It is also shown that 
every subvariety of $\mathbb{RUNO}1$ satisfies the Strong Amalgamation Property and the property that epimorphisms are surjective (ES).
  It is shown that the lattice of subvarieties of $\mathbb{RUNO}1$ is a Boolean lattice of 32 elements.  The bases for all the subvarieties of $\mathbb{RUNO}1$ are also given.  
 We introduce a new logic called $\mathcal{RUNO}1$ and show that it is algebraizable with the variety $\mathbb{RUNO}1$ as its equivalent algebraic semantics.
  We then present axiomatizations for all 32 axiomatic extensions of 
$\mathcal{RUNO}1$ and deduce that all the axiomatic extensions are decidable.
The paper ends with some open problems.  
\end{abstract}

\maketitle

\thispagestyle{empty}

\section{Introduction}

 Let ``true'' and ``false'' abbreviate to ``T`` and ``F''  (or, ``1'' and ``0'') respectively, and let $\to$ denote an implication. 
Let us start with a very simple, but natural, question:   
 
{\bf Question A:}  What is ``$F \to T''$?, or equivalently, what is ``$0 \to 1''?$
 
 Of course, a student who is taking the first course in logic would, in most likelihood, be told by her/his professor that $F \to T =T,$ or in the other notation, $0 \to 1= 1$, as the answer to the question. This answer, together with $1 \to 1=1$, $0 \to 0=1$, and $1 \to 0=0$, would lead, of course, to the well-known ``orthodox'' (i.e. Boolean) implication on the Boolean algebra $\mathbf{2}$ and, thereby, to the the variety $\mathbb{V}(\mathbf{2})$ of Boolean algebras and the associated ``orthodox'' (i.e. classical) logic.  
 A curious student, however, might ask his/her professor the following follow-up question, in response to the professor's previous answer:
 
{\bf Question B: Can ``$0 \to 1$'' be different from 1? }

If the student was taking the logic course before the middle of the nineteenth century, it is very likely that he/she would have been told by the professor that it couldn't be different from T (or 1); but 
 if the student is taking the course in 2025, the professor would have certainly answered question B by saying, ``YES, indeed, it is possible''.  This paper grew out of an attempt to give an elaborate and convincing justification to the answer ``YES.'' 

In order to answer Question B completely satisfactorily, however, we need to first refine the Question B into two questions as follows:

{\bf Question B1:}   Can ``$0 \to 1 =0$, so that the resulting logic is ``interesting''? 

 {\bf Question B2:}   Can $0 \to 1$ be different from both $0$ and $1$, so that the resulting logic is also ``interesting'' )? 

  Let us first consider, briefly, an answer to {\bf Question B1}.  For this purpose, consider the following 2-element   
  algebra, with $0 < 1$, in the language $\langle \lor, \land, \to, 0, 1\rangle$:\\

$\mathbf{\overline{2}}$ :                        
\begin{tabular}{r|rr}
	$\to$ & 0 & 1\\
	\hline
	0 & 1 & {\color{red}0} \\
	1 & 0 & 1
\end{tabular} \hspace{.4cm}  \\

\medskip
 Observe that it is indeed the case that $0 \to 1=0$ in the algebra $\mathbf{\overline{2}}$.  So, we could call this algebra ``{\bf anti-Boolean}.''   YET,  $\mathbf{\overline{2}}$, and the variety $\mathbb{V}(\mathbf{\overline{2}})$ that it generates, as well as the corresponding logic are quite interesting in the sense that they share several important properties with the Boolean (i.e., orthodox) algebra $\mathbf{2}$, the variety $\mathbb{V}(\mathbf{2})$ and its corresponding logic namely classical logic.  For instance,  
\begin{itemize}
\item $\mathbf{\overline{2}}$ is a primal algebra,  just as the Boolean algebra $\mathbf{2}$ is.

\item $\mathbb{V}(\mathbf{\overline{2}})$ has an interesting logic.  It is a {\bf connexive logic} in the sense that Aristotle's Theses: $\vdash \neg(\neg\alpha \to \alpha)$, and $\vdash \neg(\alpha \to \neg\alpha)$ and Boethius Theses: $\vdash (\alpha \to  \beta) \to \neg (\alpha \to \neg \beta)$ and  $(\alpha \to \neg\beta) \to \neg (\alpha \to \beta)$ are theorems in that logic.   

\item More interestingly, the variety $\mathbb{V}(\mathbf{\overline{2}})$ turns out to be  
{\bf term-equivalent} to $\mathbb{V}(\mathbf{2})$, the variety of Boolean algebras!  Hence, 
we could say that the classical logic is ``connexive'' too.
 \end{itemize}
 Thus the answer to {\bf Question B1} is a definite ``YES''.  
 For all these claims, and much more, about $\mathbf{\overline{2}}$, $\mathbb{V}(\mathbf{\overline{2}})$, its associated connexive logic, and other related connexive logics, we refer the reader to the recent papers \cite{CoSa25} and \cite{FaLePa24}.

  The main focus of this paper\footnote{The contents of this paper were presented by the author in invited lectures at the conference on Non-Classical Logics, Lodz, Poland, Sept 5-8, 2024 and at the International Conference on Algebra, Logic, and their Applications, Yerevan, Armenia, 13-19 October 2024, as well as at Mathematical Congress of Americas, Miami, Florida, 21-25 July 2025.} is to address {\bf Question B2}.  Since the variety of Boolean algebras is known to have an axiomatization in the language {\bf L} = $\langle \lor, \land, \to, ', 0, 1\rangle$, we will address {\bf Question B2} in the language {\bf L}, together with some of its far reaching ramifications.
The following FIVE algebras in the language {\bf L} will play a major role in providing 
a positive answer to {\bf Question B2}.  These algebras
  can be viewed as logical matrices with 1 as  the only designated truth value.  If we think of the classical logic as ``orthodox'', then we could, informally speaking, think of the following algebras as ``unorthodox'' as their $\to$ operation is such that $0 \to 1$ is a fixed point under the (unary) De Morgan operation $'$ and hence is
  neither $1$ nor $0$.  (The precise definition of `unorthodox'' will be given later in the paper.)   The first four of these five algebras are chain-based and hence $\land$, $\lor$ are not shown.

 \subsection{``Unorthodox Algebras''}
 
 \
 
 \medskip
 All five algebras are of type {\bf L}= $\langle \lor, \land, \to, ', 0, 1\rangle$. The first four have the 3-element chain as their lattice-reduct and the fifth one has the 4-element Boolean lattice as its lattice-reduct.
 \medskip
 \begin{itemize}
\item
 {\bf ALGEBRA} $\mathbf{A1}$: \quad $0<2<1,$ \\
 \vspace{.24cm}
 
\begin{tabular}{r|rrr}
$\to$ & 0 & 1 & 2\\
\hline
    0 & 1 & {\color{red}2} & 1 \\
    1 & 0 & 1 & 2 \\
    2 & 0 & 1 & 1
\end{tabular} \hspace{1.7cm}
\begin{tabular}{r|rrr}
$'$ & 0 & 1 & 2\\
\hline
   & 1 & 0 & 2
\end{tabular} \hspace{1.6cm}
\vspace{.31cm}
\item
\vspace{1.7cm}
{\bf ALGEBRA} $\mathbf{A2}$:  \quad $0<2<1,$\\
\vspace{.24cm}

\begin{tabular}{r|rrr}
$\to$: & 0 & 1 & 2\\
\hline
    0 & 1 & {\color{red}2} & 1 \\
    1 & 0 & 1 & 2 \\
    2 & 0 & {\color{red}2} & 1
\end{tabular} \hspace{1.7cm}
\begin{tabular}{r|rrr}
$'$: & 0 & 1 & 2\\
\hline
   & 1 & 0 & 2
\end{tabular} \hspace{.5cm}\\
\medskip
\end{itemize}

\begin{itemize}
\newpage
\item
 {\bf ALGEBRA} $\mathbf{A3}$:   \quad $0<2<1,$\\
 \vspace{.23cm}

\begin{tabular}{r|rrr}
$\to$: & 0 & 1 & 2\\
\hline
    0 & 1 & {\color{red}2} & {\color{red}2}\\
    1 & 0 & 1 & 2 \\
    2 & 0 & 1 & 1
\end{tabular} \hspace{1.7cm}
\begin{tabular}{r|rrr}
$'$: & 0 & 1 & 2\\
\hline
   & 1 & 0 & 2
\end{tabular} \hspace{.5cm}
\vspace{.2cm}
\item
 {\bf ALGEBRA} $\mathbf{A4}$:  \quad $0<2<1,$\\

\vspace{.23cm}

\begin{tabular}{r|rrr}
$\to$ & 0 & 1 & 2\\
\hline
    0 & 1 & {\color{red}2} & {\color{red}2} \\
    1 & 0 & 1 & 2 \\
    2 & 0 & {\color{red}2} & 1
\end{tabular} \hspace{1.7cm}
\begin{tabular}{r|rrr}
$'$ & 0 & 1 & 2\\
\hline
   & 1 & 0 & 2
\end{tabular} \hspace{.7cm}
\vspace{.6cm}
\end{itemize}
\begin{itemize}
\item
{\bf ALGEBRA} $\mathbf{A5}$:  \quad  $2 \lor 3=1$, $2 \land 3=0$,\\
\vspace{.23cm}

\begin{tabular}{r|rrrr}
$\to$ & 0 & 1 & 2 & 3\\
\hline
    0 & 1 & {\color{red}2} & 1 & {\color{red}2} \\
    1 & 0 & 1 & 2 & 3 \\
    2 & 3 & {\color{red}2} & 1 & {\color{red}0} \\
    3 & 2 & 1 & 2 & 1
\end{tabular} \hspace{1cm}
\begin{tabular}{r|rrrr}
$'$ & 0 & 1 & 2 & 3\\
\hline
   & 1 & 0 & 2 & 3
\end{tabular} \hspace{.5cm}
\end{itemize}

\begin{Remark}  
\begin{thlist}
\item[a] All these algebras share the following properties with the the Boolean (i.e., orthodox) algebra $\mathbf{2}$:
\begin{thlist}
  \item[1] $0 \to 1=  2$ (different from $0$ and $1$).
 
 \item[2] $(0 \to 1)' =  0 \to 1.$  Hence, we can consider these algebras as ``unorthodox.'' 
 
 \item[3] They have no proper subalgebras.  In particular, none of the five algebras have the Boolean algebra $\mathbf 2$ as a subalgebra.  
 
 \item[4] They have no nontrivial automorphism.
 \end{thlist} 
\item[b] The elements in red on the $\to$ table of the first four algebras indicate the places where they differ from the $\to$ table of the 3-element Heyting algebra, while those of the fifth algebra indicate where they differ from the $\to$ table of the 4-element Boolean algebra.
\end{thlist} 

\end{Remark}

In this paper, we will examine these five algebras both algebraically and logically.   So, we first investigate the variety generated by the above five algebras; more specifically, we will examine the following problem:

 {\bf PROBLEM C:  Find an equational base for the variety generated by the above
 five algebras.}

We then study the structure of the lattice of subvarieties of the variety $\mathbb{V}(\mathbf{A1}, \mathbf{A2}, \mathbf{A3}, \mathbf{A4}, \mathbf{A5})$ generated by the five algebras.  We also introduce logics associated with the subvarieties of this variety
and investigate the properties, such as amalgamation property, being a discriminator variety, decidability of the logics and so on.

More specifically, the paper is organized as follows:  Section 2 recalls definitions, notation and preliminary results about De Morgan semi-Heyting algebras that are needed later in the paper.  In Section 3, we define and initiate an investigation 
into a subvariety, called $\mathbb{RUNO}1$, of the variety of De Morgan semi-Heyting algebras.  Section 4 presents a characterization of the 
subdirectly irreducible algebras in $\mathbb{RUNO}1$ and as a consequence, we show that $\mathbb{RUNO}1$ is, in fact, the variety generated by the five algebras, thus solving PROBLEM C. Then several applications of this theorem are given. It is shown that $\mathbb{RUNO}1$ is a discriminator variety and that all five algebras are primal.  It is also shown that 
every subvariety of $\mathbb{RUNO}1$ satisfies the Strong Amalgamation Property and the property that epimorphisms are surjective (ES).
 Sections 5 and 6 give further applications.  It is shown, in Section 5, that the lattice of subvarieties of $\mathbb{RUNO}1$ is a Boolean lattice of 32 elements.  We also present bases for all 32 subvarieties of $\mathbb{RUNO}1$.   
 In section 6 we give several more identities that hold in $\mathbb{RUNO}1$ and in its subvarieties. 
 
The rest of the paper deals with the variety $\mathbb{RUNO}1$ from a logical point of view.   Section 7 will recall, by way of a quick introduction to implicative logics in the sense of Rasiowa, and to abstract algebraic logic due to Blok and Pigozzi, the notions of algebraizability of a logic and its equivalent algebraic semantics as well as the most fundamental results connecting them.  In Section 8, we recall from \cite{CoSa22a} a known algebraizable logic called $\mathcal{DMSH}$ corresponding to  De Morgan semi-Heyting algebras, which plays a crucial role in the rest of the paper.  We then introduce a new logic called $\mathcal{RUNO}1$ as an axiomatic extension of the logic $\mathcal{DMSH}$.
It is then deduced that the logic $\mathcal{RUNO}1$ is implicative and hence algebraizable, 
with the variety $\mathbb{RUNO}1$ as its equivalent algebraic semantics. Section 9 presents bases for all axiomatic extensions of 
$\mathcal{RUNO}1$, from which it follows that all its axiomatic extensions are decidable.
The paper ends with some open problems in Section 10.

\section{Preliminaries}

In this section, we recall some useful results regarding De Morgan semi-Heyting algebras. 
Observe that all the five algebras mentioned above have semi-Heyting reducts.

\indent Semi-Heyting algebras were introduced by the author in 1984-85 as an abstraction of Heyting algebras.  The first results on 
these algebras, however, were not published until 2007 (see \cite{Sa07}).

An algebra $L = \langle L, \land, \lor, \to, 0, 1\rangle$ is a semi-Heyting algebra if the following conditions hold:
\begin{enumerate}[(SH1)]
	\item $\langle L, \land, \lor, 0,1\rangle$ is a lattice with $0$ and $1$,
	\item $x \land (x \to y) \approx x \land y$,
	\item $x \land (y \to z) \approx x \land [(x \land y) \to (x \land z)]$,
	\item $x \to x \approx 1$.
\end{enumerate}
A semi-Heyting algebra is a Heyting algebra if it satisfies:
$$(x \land y) \to x \approx 1.$$
We will denote the variety of semi-Heyting algebras by $\mathbb{SH}$ and that of Heyting algebras by $\mathbb{H}.$

Semi-Heyting algebras share some important properties with Heyting algebras; for instance, semi-Heyting algebras are distributive and pseudocomplemented; the congruences on them are determined by filters and the variety of semi-Heyting algebras is arithmetical 
(for more such properties, see \cite{ACDV11, Ab11a, AbCoVa13, Sa07}). 

A key feature of semi-Heyting algebras is the following \cite{AbCoVa13}: Every semi-Heyting algebra $\langle A,\lor,\land,\to, 0, 1 \rangle$ gives rise naturally to a Heyting algebra $\langle A,\lor,\land,\to_H, 0, 1 \rangle$ by defining the implication $x\to_H y$ as $x\to(x\land y)$.\\

	The following lemmas will be useful in the sequel.
	
\begin{Lemma}	 Let  $\mathbf{L} = \langle L,  \land, \lor, \to, 0, 1\rangle$ be a semi-Heyting algebra.
For $a,b,c \in L,  a \land b \leq c$ if and only if  $a \leq b \to (b \land c)$.
\end{Lemma}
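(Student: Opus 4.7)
The statement is a ``residuation-style'' equivalence for the semi-Heyting implication; both directions should follow from the defining identities (SH2)--(SH4), so no induction or appeal to subdirect decomposition is needed. The key idea on the nontrivial direction is to use (SH3) to \emph{relativize} the term $b \to (b\land c)$ to $a$, which turns the hypothesis $a\land b\le c$ into a statement of the form ``an implication whose antecedent equals its consequent,'' at which point (SH4) collapses it.

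\textbf{Forward direction ($a\land b\le c \Rightarrow a\le b\to(b\land c)$).} I would compute $a \land [b\to(b\land c)]$ and show it equals $a$. Apply (SH3) with $x=a$, $y=b$, $z=b\land c$ to obtain
\[
a\land [b\to(b\land c)] \;=\; a\land\bigl[(a\land b)\to(a\land b\land c)\bigr].
\]
The hypothesis $a\land b\le c$ gives $a\land b\land c = a\land b$, so the right-hand side becomes $a\land[(a\land b)\to(a\land b)] = a\land 1 = a$ by (SH4). Hence $a\le b\to(b\land c)$.

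\textbf{Converse direction ($a\le b\to(b\land c) \Rightarrow a\land b\le c$).} This is a one-line application of (SH2). Meeting both sides of the hypothesis with $b$ gives
\[
a\land b \;\le\; b\land[b\to(b\land c)] \;=\; b\land(b\land c) \;=\; b\land c \;\le\; c,
\]
using (SH2) in the middle equality.

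\textbf{Anticipated difficulty.} There is essentially none: the only nonobvious move is the choice to invoke (SH3) in the forward direction rather than trying to work with $b\to(b\land c)$ directly, since (SH2) alone only tells us about expressions of the form $x\land(x\to y)$ and not $a\land(b\to z)$ when $a\ne b$. Once one recognizes that (SH3) is precisely the tool designed to handle such ``off-diagonal'' meets with an implication, the proof writes itself.
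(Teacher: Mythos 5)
Your proof is correct, and the paper itself gives no proof of this lemma (it is stated as a preliminary fact, presumably regarded as routine from the earlier literature on semi-Heyting algebras). Your argument --- using (SH3) to relativize $b\to(b\land c)$ to $a$ and then collapsing with (SH4) for the forward direction, and meeting with $b$ plus (SH2) for the converse --- is exactly the standard verification one would supply.
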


\begin{Lemma}{\rm \cite[Corollary 3.9]{Co11} }
Let  $\mathbf{L} = \langle L,  \land, \lor, \to, 0, 1\rangle$ be a semi-Heyting algebra.
For $a,b \in L$,  $a \to_H b = 1$ if and only if $a \leq b$. 
\end{Lemma}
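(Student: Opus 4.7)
The statement is a standard biconditional, and I would prove each direction directly from the semi-Heyting axioms combined with the defining equation $a \to_H b := a \to (a \land b)$.

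For the forward direction, suppose $a \to_H b = 1$, i.e.\ $a \to (a \land b) = 1$. The plan is to apply axiom (SH2), namely $x \land (x \to y) \approx x \land y$, with the instantiation $x := a$ and $y := a \land b$. This yields $a \land (a \to (a \land b)) = a \land (a \land b) = a \land b$. But by hypothesis the left-hand side collapses to $a \land 1 = a$, so $a = a \land b$, which is precisely $a \leq b$.

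For the reverse direction, suppose $a \leq b$, so that $a \land b = a$. Substituting into the definition of $\to_H$, we obtain $a \to_H b = a \to (a \land b) = a \to a$, which equals $1$ by axiom (SH4).

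Neither direction is a real obstacle: the only ``trick'' is recognizing that (SH2) forces $a = a \land b$ once we know $a \to (a \land b) = 1$, and this is the step that does all the work. Alternatively, one could derive the forward direction from the preceding lemma by taking $c := a$ in $a \land b \leq c \iff a \leq b \to (b \land c)$ (after an appropriate relabeling), but the direct calculation via (SH2) and (SH4) is the cleanest route and avoids introducing any extra machinery.
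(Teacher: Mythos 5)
Your proof is correct and complete: the forward direction follows from (SH2) with $x:=a$, $y:=a\land b$, and the reverse direction from (SH4), exactly as you compute. The paper offers no proof of this lemma---it is cited from \cite[Corollary 3.9]{Co11}---so there is nothing to compare against; the only small quibble is in your closing aside, where the instantiation of the preceding lemma should be (in that lemma's variables) $a:=1$, $b:=a$, $c:=b$, giving $1\land a\le b$ iff $1\le a\to(a\land b)$, rather than ``$c:=a$'', but your main argument does not depend on that remark.
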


\vspace{1cm}

\section{The variety $\mathbb{RUNO}1$}

In this section we define, and investigate, an important subvariety, called ``$\mathbb{RUNO}1$'' of the variety of dually hemimorphic semi-Heyting algebras.

\begin{Definition}
An algebra $\mathbf A=\langle A, \wedge, \lor, \to, ', 0,1\rangle$ is a semi-Heyting algebra with a dual hemimorphism (or dually hemimorphic semi-Heyting algebra) 	if $\mathbf A$ satisfies the following conditions:	
	\begin{enumerate}[$(E1)$]
		\item  $\langle A,\wedge, \lor, \to, 0,1\rangle $ is a semi-Heyting algebra, 
		\item $0' \approx 1$, \label{conditionSD2d1}
		\item $1' \approx 0$, \label{conditionSD2d2} 
		\item $(x \land y)' \approx x' \lor y'$  {\rm ($\land$-De Morgan law)}.  \label{infDMlaw} \label{conditionSD3d}
	\end{enumerate}
We will denote the variety of dually hemimorphic semi-Heyting algebras by $\mathbb{DHMSH}$.
An algebra $\mathbf A \in \mathbb{DHMSH}$ is a dually quasi-De Morgan semi-Heyting algebra if  $\mathbf A$ satisfies the following:

{\rm(DQD1)} $(x \lor y)'' \approx x'' \lor y''$;

{\rm(DQD2)} $x'' \leq x$.\\
The variety of dually quasi-De Morgan semi-Heyting algebras is denoted by $\mathbb{DQDSH}$.\\ 
An algebra $\mathbf A \in \mathbb{DQDSH}$ is said to be De Morgan if $\mathbf A \models x'' \approx x$.
An algebra $\mathbf A \in \mathbb{DHMSH}$ is said to be {\bf unorthodox} if 
$\mathbf A$ satisfies the axiom:  $(0 \to 1)' = 0 \to 1.$ \\
The variety of unorthodox De Morgan semi-Heyting algebras is denoted by $\mathbb{UNO}$.  
An algebra $\mathbf A \in \mathbb{UNO}$ is called regular if  $\mathbf A$ satisfies the identity:

$x \land x'{^*}' \leq y \lor y^*$ \quad (R).\\
The variety of regular unos is denoted by $\mathbb{RUNO}$. 
An algebra $\mathbf A \in \mathbb{DHMSH}$ is of level 1 if $\mathbf A$ satisfies the identity: 
$x \land x'^* \approx (x \land x'{^*})'^*$.\\
If  $\mathbb{V}$ is a subvariety of $\mathbb{DHMSH}$, then  $\mathbb{V}1$ denotes the subvariety of $\mathbb{V}$ consisting of algebras from $\mathbb{V}$ of level 1.  Thus
$\mathbb{RUNO}1$ denotes the subvariety of $\mathbb{RUNO}$ consisting of algebras from $\mathbb{RUNO}$ of level 1.
\end{Definition}
   
   The significance of the variety  $\mathbb{RUNO}1$ is revealed by the following theorem:
   
 \begin{Theorem}
 The algebras $\mathbf{A}i \in \mathbb{RUNO}1$, $i=1, 2, \cdots, 5$. 
 \end{Theorem}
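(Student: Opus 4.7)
The theorem asserts that each of $\mathbf{A1},\ldots,\mathbf{A5}$ satisfies all the defining identities of $\mathbb{RUNO}1$: the semi-Heyting axioms (SH1)--(SH4); the dual hemimorphism axioms $0'\approx 1$, $1'\approx 0$, $(x\wedge y)'\approx x'\vee y'$; the quasi-De Morgan conditions (DQD1)--(DQD2) together with involutivity $x''\approx x$; the unorthodox identity $(0\to 1)'\approx 0\to 1$; the regularity identity (R); and the level-$1$ identity $x\wedge x'^*\approx (x\wedge x'^*)'^*$. My plan is to verify these axioms directly from the tables, organising the work by how much each axiom depends on the particular $\to$-table versus the shared $'$-table and lattice reduct.

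First I would discharge the ``uniform'' axioms in one sweep. In every one of the five algebras the $'$-operation swaps $0\leftrightarrow 1$ and is the identity on the remaining elements, so $x''\approx x$, $0'\approx 1$, $1'\approx 0$, and consequently (DQD1) and (DQD2) are immediate. On each of the two lattice reducts (the $3$-chain and the $4$-element Boolean lattice) one verifies the $\wedge$-De~Morgan law $(x\wedge y)'\approx x'\vee y'$ by a short case split, using that $'$ is a lattice anti-automorphism. The unorthodox axiom is read off the tables directly: each $\to$-table has $0\to 1 = 2$ (the red entry in row~$0$, column~$1$), and $2'=2$.

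Next I would verify the semi-Heyting axioms (SH1)--(SH4) table by table. (SH1) is structural and (SH4) $x\to x\approx 1$ is the diagonal of the $\to$-table. (SH2) $x\wedge(x\to y)\approx x\wedge y$ is a row-wise check. The potentially expensive axiom is (SH3), $x\wedge(y\to z)\approx x\wedge((x\wedge y)\to (x\wedge z))$, but its $x=0$ case is trivial and its $x=1$ case reduces to the tautology $y\to z=y\to z$; only $x\in\{2\}$ (or $x\in\{2,3\}$ in $\mathbf{A5}$) produces real work. For each of these ``middle'' values I would compute both sides on all pairs $(y,z)$, exploiting further that whenever $x\wedge y=x\wedge z$ both sides collapse, so that only the pairs with $x\wedge y\neq x\wedge z$ demand actual checking.

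Finally, for the regular and level-$1$ identities I would first tabulate the pseudocomplement $x^*:=x\to 0$ from each $\to$-table, then the auxiliary terms $x'^*$ and $x'^{*'}$. For the four $3$-chain algebras, a uniform calculation yields $x\wedge x'^*\in\{0,1\}$ and $x\wedge x'^{*'}\in\{0,2\}$, from which the level-$1$ identity is immediate (since $0'^*=1^*=0$ and $1'^*=0^*=1$) and (R) follows from the observation that $y\vee y^*\geq 2$ for every $y$. For $\mathbf{A5}$ the analogous computation on the $4$-element lattice gives the same conclusion. The main obstacle is really the verification of (SH3), whose three-variable shape threatens a case explosion; however, the reductions above (triviality at $x\in\{0,1\}$ together with the $x\wedge y=x\wedge z$ shortcut) keep the bookkeeping manageable on each algebra, and the remaining axioms reduce to tabulations that can be read off once $x^*$ has been computed.
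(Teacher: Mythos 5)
Your proposal is correct and takes essentially the same route as the paper, which simply declares the verification of the $\mathbb{RUNO}1$ axioms in each finite algebra $\mathbf{A}i$ to be routine; you have merely organized that routine table check explicitly. One minor caveat: in verifying (SH3), the case $x\wedge y = x\wedge z$ only collapses the right-hand side to $x$ (via (SH4)), so you must still confirm $x\le y\to z$ in those cases rather than skip them --- but since your primary plan is to compute both sides on all pairs anyway, this does not affect the soundness of the exhaustive finite check.
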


\begin{Proof}
 It is routine to verify that the axioms of $\mathbb{RUNO}1$ are true in each $\mathbf{A}i$, $i=1, 2, \cdots, 5$.
 \end{Proof} 
 
 \medskip
 We wish to show that the converse of the above theorem holds as well.  To achieve this goal,  we need to characterize the subdirectly irreducible algebras in $\mathbb{RUNO}1$,  Toward this end, it will be helpful to know the maximum height of subdirectly irreducible algebras in $\mathbb{RUNO}1$.

\subsection{On the height of algebras of $\mathbb{RUNO}1$ satisfying (SC)} 

\
 
\indent  The purpose of this subsection is to show that the height of members of $\mathbb{RUNO}1$ that satisfy the condition (SC) (see below) is at most two--a result which will be useful in describing the subdirectly irreducible algebras in $\mathbb{RUNO}1$.
Let $x^+ := x'{^*}'$.

Unless stated otherwise, the lemmas below will have the following hypothesis:

``Let $\mathbf A \in \mathbb{RUNO}1$ satisfy:

{\rm(SC)} \quad $ x \neq 1 \Rightarrow x \land x'^* = 0.$

 Let $a := 0 \to 1$, and $b \in A$ such that $a< b<1$.''

\begin{Lemma} \label{125} 
 $b' \leq b$.  
\end{Lemma}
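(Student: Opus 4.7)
The plan is to prove this by the essentially trivial combination of two facts about $\mathbf A$: that $a = 0 \to 1$ is a fixed point of the unary operation $'$, and that $'$ is a strictly order-reversing involution. The hypotheses (SC), regularity, and level $1$ should not be needed for this step; they are presumably laid down once for the whole subsection and will be consumed by the lemmas that build on this one.

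First, I would record the two ingredients. The unorthodox axiom, being literally $(0 \to 1)' \approx 0 \to 1$, gives $a' = a$. For the order-reversal, the $\land$-De Morgan law (E4) alone suffices: if $x \leq y$ then $x = x \land y$, so $x' = (x \land y)' = x' \lor y'$, hence $y' \leq x'$. Combined with the involutive law $x'' \approx x$ (which holds since $\mathbf A$ is De Morgan), the map $'$ is an order-reversing bijection, and in particular strictly order-reversing: $x < y$ implies $y' < x'$.

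With these in hand, the proof is immediate: apply $'$ to the strict inequality $a < b$ to obtain $b' < a'$, then use $a' = a$ to rewrite this as $b' < a$. Chaining with the other half of the hypothesis $a < b$ yields $b' < a < b$, so in particular $b' \leq b$, as required.

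There is no real obstacle here; the content of the lemma is exactly that an element strictly above the $'$-fixed point $a$ must lie strictly above its own complement. I would expect the actual obstacles to appear in the subsequent lemmas of the subsection, where (SC), regularity, and the level-$1$ identity $x \land x'^{*} \approx (x \land x'^{*})'^{*}$ are used to bound the height of subdirectly irreducible algebras in $\mathbb{RUNO}1$ by two.
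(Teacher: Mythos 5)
Your proof is correct and follows essentially the same route as the paper's: apply order-reversal of $'$ to $a \leq b$, use the unorthodoxy fixed point $a' = a$ to get $b' \leq a$, and chain with $a \leq b$. The extra care you take in justifying order-reversal from the De Morgan law and in noting that (SC), regularity, and level~1 are not needed here is accurate but does not change the argument.
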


\begin{Proof}
From $a \leq b$ we have $b' \leq a'$, implying $b' \leq a$ as $a'=a$ by hypothesis, which yields $b' \leq b$, as $a \leq b$.  
\end{Proof}

\begin{Lemma} \label{182}
 $b' \lor  b'^*= b \lor b'^*$.    
\end{Lemma}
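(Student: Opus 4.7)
The forward inclusion $b' \vee b'^* \le b \vee b'^*$ is immediate from $b' \le b$ (Lemma~\ref{125}). For the reverse I would not attack $b \le b' \vee b'^*$ directly, but rather pass to the De~Morgan dual: applying ${}'$ to both sides and invoking the $\wedge$-De~Morgan law together with involutivity, the desired identity is equivalent to $b' \wedge b'^{*'} = b \wedge b'^{*'}$. Since $b' \le b$ already gives the $\le$ direction, it suffices to show $b \wedge b'^{*'} \le b'$; I will in fact aim for the stronger statement $b \wedge b'^{*'} = 0$, with one exceptional case that is handled differently.

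The proof divides into two cases according as $b'^{*'} = 1$ or not. Suppose first $b'^{*'} \ne 1$. The key move is to apply (SC) to the auxiliary element $x := b'^{*'}$ itself, obtaining $b'^{*'} \wedge (b'^{*'})'^* = 0$. Using involutivity, $(b'^{*'})' = b'^*$, so this collapses to $b'^{*'} \wedge b'^{**} = 0$; the universal property of the pseudocomplement then yields $b'^{*'} \le b'^{***}$, and the Glivenko--Stone identity $y^{***} = y^*$ (valid in every distributive pseudocomplemented lattice, hence in every semi-Heyting algebra) gives $b'^{*'} \le b'^*$. Combining this with (SC) applied to $b$, which is legitimate since $b < 1$ and produces $b \wedge b'^* = 0$, I conclude $b \wedge b'^{*'} \le b \wedge b'^* = 0 \le b'$, as required. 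Suppose instead $b'^{*'} = 1$, i.e., $b'^* = 0$. Then the regularity axiom (R) with $x = b$ and $y = b'$ gives $b = b \wedge 1 = b \wedge b'^{*'} \le b' \vee 0 = b'$; combined with Lemma~\ref{125} this forces $b = b'$, and the target equation becomes trivial.

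The main obstacle, I expect, is spotting the correct application of (SC). Applied directly to $b$ or to $b'$, it yields only the nearly automatic facts $b \wedge b'^* = 0$ and $b' \wedge b'^* = 0$, which leave the $b'^{*'}$ term in the De~Morgan dual untouched. Applying (SC) instead to the auxiliary element $b'^{*'}$, and then exploiting the Glivenko--Stone collapse $y^{***} = y^*$ to strip off the outer prime, is the maneuver that makes the argument go through; the regularity axiom (R) then mops up the remaining degenerate case $b'^* = 0$.
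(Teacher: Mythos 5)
Your proof is correct, but it takes a genuinely different route from the paper's. The paper establishes the identity by a single chain of equalities with no case distinction: starting from $b' \lor b'^*$, it joins on the term $b \land b^+$ (harmless, since regularity (R) with $x=b$, $y=b'$ gives $b \land b^+ \leq b' \lor b'^*$), distributes, absorbs $b'$ into $b$ via Lemma~\ref{125}, and kills the remaining factor using $b' \lor b^+ = 1$, which is just the De Morgan dual of (SC) applied to $b$. So in the paper, regularity is the engine and (SC) is only ever invoked at the element $b$ itself. You instead dualize the whole identity to $b \land b^+ = b' \land b^+$ and reduce it to $b \land b^+ \leq b'$, which you get in the generic case from the sharper fact $b \land b^+ = 0$; the key maneuver is applying (SC) to the auxiliary element $b^+ = b'{^*}'$ and then collapsing via $y^{***} \approx y^*$ (valid since semi-Heyting algebras are pseudocomplemented) to obtain $b^+ \leq b'^*$. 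In your argument regularity is demoted to a cleanup role, needed only in the degenerate case $b'^* = 0$, where it forces $b = b'$. Both arguments are sound and rest on the same hypotheses; the paper's is shorter and uniform, while yours isolates the stronger intermediate statement $b^+ \leq b'^*$ (when $b^+ \neq 1$) and makes visible that regularity is barely needed for this particular lemma.
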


\begin{Proof}
As $b \neq 1$, $b \land b'^* = 0$ by (SC).  Hence,
 $b' \lor  b^+ =0'$, 
whence we have
\begin{equation} \label{225}
 b' \lor b^+ =1.     
\end{equation}
Now,
\begin{align*}
b' \lor b'^* &= (b' \lor  b'^*) \lor  (b \land b^+) &\text{ by regularity}\\
&=  (b' \lor  b'^* \lor  b) \land (b' \lor  b'^* \lor  b'{^*}')\\
&= (b'^* \lor  b) \land (b' \lor  b'^* \lor  b^+) &\text{ by Lemma \ref{125}}\\
&= b \lor b'^*  &\text{ by (\ref{225})},
\end{align*}
proving the lemma.
\end{Proof}

\begin{Lemma}  \label{200}
 $a \land  (b' \lor b'^*)  = b'.$  
\end{Lemma}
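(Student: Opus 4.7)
My plan is to expand the meet over the join using distributivity of the underlying lattice (semi-Heyting algebras are distributive, as noted in Section 2), and then control each summand separately.

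Write
\[
a \land (b' \lor b'^*) = (a \land b') \lor (a \land b'^*).
\]
For the first summand, I will invoke the inequality $b' \le a$ that was already established inside the proof of Lemma \ref{125} (from $a \le b$ we get $b' \le a' = a$ using unorthodoxy). This immediately gives $a \land b' = b'$.

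For the second summand, I will show that $a \land b'^* = 0$. The key observation is that $a \le b$ (by hypothesis) forces
\[
a \land b'^* \le b \land b'^*,
\]
and the right-hand side is $0$: since $b < 1$, hypothesis $(\mathrm{SC})$ applied to $b$ yields $b \land b'^* = 0$. Hence $a \land b'^* = 0$.

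Combining the two computations,
\[
a \land (b' \lor b'^*) = b' \lor 0 = b',
\]
which is the desired identity. The only potentially delicate point is really step~2, where one must be sure that $b' \le a$ holds (and not just $b' \le b$); this is exactly what the hypothesis $a' = a$ buys us via the argument already recorded in Lemma \ref{125}. Aside from that, the whole argument is a one-line distributivity computation powered by $(\mathrm{SC})$, and I do not anticipate a real obstacle. Note that Lemma \ref{182} is not actually needed for this particular step; the earlier bound $b' \le a$ together with $(\mathrm{SC})$ is enough.
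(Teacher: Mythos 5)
Your proof is correct and follows essentially the same route as the paper's: distribute the meet, kill the summand $a \land b'^*$ via $a \le b$ and (SC), and reduce $a \land b'$ to $b'$ using $b' \le a' = a$ (the paper writes this last step as $a \land b' = a' \land b' = (a\lor b)' = b'$, which is the same fact). Your observation that Lemma \ref{182} is not needed here is also consistent with the paper, whose proof of this lemma likewise does not invoke it.
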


\begin{Proof}
Since $b \neq 1,$ we have $b \land b'^* = 0$ by (SC), which implies
  $a \land b'^* = 0,$ as $a \leq b,$  implying
\begin{equation}  \label{129} 
 a \land b'^*  = 0.  
\end{equation}
Hence we have
\begin{align*}
a \land  (b' \lor b'^*) &=(a \land b') \lor (a \land b'^*)\\
                               &= (a \land b') \lor 0 \\
                               &= a' \land b' \\
                               &= b'  &\text{ by  (\ref{129})},
 \end{align*}
 proving the lemma.
\end{Proof}

We are now ready to prove the following theorem which will be useful in the next section. 
\begin{Theorem} \label{Height_Th}
Let $\mathbf{A} \in \mathbb{RUNO}1$ satisfy:

{\rm(SC)} \quad $ x \neq 1 \Rightarrow x \land x'^* = 0.$\\
Then,
 the height of $\mathbf{A}$, $h(\mathbf{A}) \leq 2$.
\end{Theorem}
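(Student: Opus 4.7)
The plan is to combine Lemmas~\ref{182} and~\ref{200} to pin down $b' = a$, derive a contradiction with $a < b$, then dualize via the De Morgan involution, and finally invoke uniqueness of complements in the distributive lattice reduct to bound the height.

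\medskip

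\emph{Step 1: no element lies strictly between $a$ and $1$.} Substituting the identity of Lemma~\ref{182} into Lemma~\ref{200} yields
$a \wedge (b \vee b'^*) = b'$.
Distributing the left-hand side and using $a \leq b$ (so $a \wedge b = a$) together with (SC) applied to $b$ (which gives $b \wedge b'^* = 0$, hence $a \wedge b'^* \leq b \wedge b'^* = 0$), that side collapses to $a$. Therefore $a = b'$. Applying the involution $'$ and invoking $a' = a$ (unorthodoxy) together with $b'' = b$ (De Morganicity), we obtain $a = b$, contradicting the hypothesis $a < b$. Hence no such $b$ exists.

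\emph{Step 2: no element lies strictly between $0$ and $a$.} In any nontrivial member of $\mathbb{RUNO}1$, $0 < a < 1$, since $a' = a$ forbids $a \in \{0,1\}$. If some $c$ satisfied $0 < c < a$, applying $'$ would give $a = a' < c' < 1$, contradicting Step~1.

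\emph{Step 3: height bound.} Suppose, for contradiction, that a chain $0 < y_1 < y_2 < 1$ existed in $\mathbf A$. A routine case analysis shows that neither $y_i$ can be comparable with $a$: any relation of the form $y_i < a$, $y_i = a$, or $y_i > a$ produces an element of the now-empty interval $(0,a)$ or $(a,1)$ (using $0 < y_1 < y_2 < 1$). Hence both $y_i$ are incomparable with $a$. For such a $y_i$, the strict inequalities $a < a \vee y_i$ and $a \wedge y_i < a$, combined with Steps~1--2, force $a \vee y_i = 1$ and $a \wedge y_i = 0$, so each $y_i$ is a complement of $a$ in the distributive lattice reduct. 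Since complements in a distributive lattice are unique, $y_1 = y_2$, a contradiction. Therefore $h(\mathbf{A}) \leq 2$.

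\medskip

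The main obstacle is Step~1: noticing that after one application of distributivity Lemmas~\ref{182} and~\ref{200} collapse to the clean equation $a = b'$, from which the De Morgan involution finishes the argument in one stroke. The remaining steps are essentially bookkeeping---a dualization via $'$ and the standard uniqueness-of-complements fact in distributive lattices.
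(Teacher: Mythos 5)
Your proposal is correct, and its core computation is exactly the paper's: substituting Lemma \ref{182} into Lemma \ref{200}, distributing, and using $a\le b$ together with $a\wedge b'^{*}=0$ to get $a=b'$, then applying the involution and unorthodoxy to conclude $a=b$, contradicting $a<b$. Where you genuinely go beyond the paper is in Steps 2 and 3. The paper's proof passes directly from ``$h(\mathbf A)>2$'' to ``there exists $b$ with $0<a<b<1$,'' which is not automatic: a chain of length $3$ could a priori consist of elements incomparable with $a$ (and indeed, in $\mathbf{A5}$ the element $3$ is incomparable with $a=2$, so incomparability is a real possibility in this variety). Your Step 2 (dualizing via $'$ to empty the interval $(0,a)$) and Step 3 (showing any two elements of a chain $0<y_1<y_2<1$ that avoid $a$ must both be complements of $a$ in the distributive lattice reduct, hence equal) supply precisely the missing reduction. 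So your argument is not a different route so much as a completed version of the paper's: same key lemmas and same decisive identity $a=b'$, plus the bookkeeping needed to make the height bound follow from the emptiness of $(a,1)$.
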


\begin{Proof} 
Since $a'=a$, it is clear that $a \neq 0$.  Now, suppose $h(\mathbf{A}) > 2$.  Then there exists an element $b \in A$ such that
  $0 < a < b < 1 $.   We wish to arrive at a contradiction.
Since $b' \lor b'^* = b \lor b'^*$ from Lemma \ref{182},  
we get from Lemma \ref{200} that
 $a \land (b \lor b'^*) = b',$  
implying $(a \land b) \lor (a \land b'^*) = b'.$  Therefore, in view of (\ref{129}), we obtain
 a  = b'. Hence $a=a'=b''=b$, which is a contradiction, completing the proof.
 \end{Proof}

\vspace{1cm} 
\section{A Characterization of Subdirectly irreducible Algebras in $\mathbb{RUNO}1$}

Our main goal in this section is to characterize the 
subdirectly irreducible algebras in $\mathbb{RUNO}1$.

The following theorem, which is a special case of a result proved in \cite{Sa14a}, is needed here. 

\begin{Theorem} {\rm \cite[Corollary 4.1]{Sa14a}}  \label{sub}
Let $\mathbf A \in \mathbb{DMSH}1$ with $|A| \geq 2$. Then the following are equivalent:
\begin{itemize}
\item[(1)] $ \mathbf A$ is simple; 
\item[(2)] $\mathbf A$ is subdirectly irreducible; 
\item[(3)] For every $x \in A$, if $x \neq 1$, then $x \land x'^* = 0$ \quad (SC).
\end{itemize}
\end{Theorem}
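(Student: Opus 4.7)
The plan is to prove the three-way equivalence as (1)~$\Rightarrow$~(2), (3)~$\Rightarrow$~(1), and (2)~$\Rightarrow$~(3) by contrapositive. The implication (1)~$\Rightarrow$~(2) is immediate: since $|A| \geq 2$, the diagonal congruence $\Delta$ differs from the total congruence $\nabla$, and in a simple algebra $\nabla$ is the unique congruence strictly above $\Delta$, hence the monolith, so $\mathbf{A}$ is subdirectly irreducible.

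For (3)~$\Rightarrow$~(1), I would exploit the congruence-filter correspondence recalled in the Preliminaries. In the $\mathbb{DMSH}$ setting the congruence-defining filters are additionally closed under the term $t(x) := x'^{*}$, because from $x \,\theta\, 1$ one derives $x' \,\theta\, 0$ and then $x'^{*} \,\theta\, 0^{*} = 1$. Let $\theta$ be any congruence with $\theta \neq \Delta$ and set $F := 1/\theta$. Then $F \neq \{1\}$, so I can pick some $x \in F$ with $x \neq 1$. By hypothesis (SC) we have $x \wedge x'^{*} = 0$, and since $F$ contains both $x$ and (by the closure just noted) $x'^{*}$, the filter $F$ contains their meet $0$. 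Hence $F = A$ and $\theta = \nabla$. This shows $\mathbf{A}$ has only the two trivial congruences, i.e., it is simple.

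For (2)~$\Rightarrow$~(3) I argue by contrapositive. Suppose (SC) fails, so there exists $x \in A$ with $x \neq 1$ and $c := x \wedge x'^{*} \neq 0$. The level-1 identity gives $c = c'^{*}$ immediately, and I intend to promote this to the stronger statement that $c$ is a central (Boolean) element of the bounded distributive lattice reduct $\langle A, \wedge, \vee, 0, 1\rangle$, with complement $c'^{*}$ lifted appropriately. Once centrality is in hand, standard universal algebra yields a direct product decomposition $\mathbf{A} \cong \mathbf{A}/\Theta(c,0) \times \mathbf{A}/\Theta(c,1)$ in which both factors are nontrivial because $0 < c < 1$ (the upper bound follows because $c \leq x \neq 1$, and if $c = 1$ then $x = 1$, a contradiction). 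The two factor-congruences are nontrivial and meet in $\Delta$, contradicting subdirect irreducibility.

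The main obstacle is the last step: establishing that $c = x \wedge x'^{*}$ is genuinely a central/Boolean element in the level-1 setting, and that the resulting lattice decomposition is compatible with the operations $\to$ and $'$. This is the technical heart of the argument and is where the interplay between the De Morgan axiom, pseudocomplementation, and the level-1 identity $x \wedge x'^{*} \approx (x \wedge x'^{*})'^{*}$ must be used most carefully. Once this is verified, the product decomposition and failure of subdirect irreducibility follow routinely, and combining all three implications closes the cycle.
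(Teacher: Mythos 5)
Note first that the paper does not actually prove this theorem: it is imported wholesale as \cite[Corollary 4.1]{Sa14a}, so there is no internal argument to compare against and your proposal has to stand on its own. Your directions (1) $\Rightarrow$ (2) and (3) $\Rightarrow$ (1) are correct and complete. In particular the filter argument is exactly right: the $1$-class $F$ of a congruence $\theta$ is a lattice filter closed under $x \mapsto x'^*$ (from $x \mathrel{\theta} 1$ one gets $x' \mathrel{\theta} 0$ and then $x'^* \mathrel{\theta} 0^* = 1$), congruences of semi-Heyting algebras are determined by their $1$-classes (as recalled in the Preliminaries), and once $0 \in F$ the congruence is $\nabla$. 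Neither of these two directions uses the level-1 hypothesis, which correctly signals that all of the content of the theorem sits in (2) $\Rightarrow$ (3).

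That is where the gap is: in (2) $\Rightarrow$ (3) you have stated the conclusion you need rather than proved it. The assertion that $c = x \wedge x'^*$ is a central element --- that it has a lattice complement, that $\Theta(c,0) \cap \Theta(c,1) = \Delta$, and that the resulting decomposition is compatible with $\to$ and $'$ --- \emph{is} the theorem; everything surrounding it is routine. It does not follow ``immediately'' from the level-1 identity: $c = c'^*$ only says that $c$ is a closed element of the pseudocomplemented lattice reduct ($c^{**} = c$), and closed elements of a pseudocomplemented distributive lattice need not be complemented (e.g.\ adjoin a new top $\top$ above the four-element Boolean lattice $\{0,a,b,1\}$; then $a$ is closed but $a \vee a^* = 1 < \top$). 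What actually has to be established, using the De Morgan involution together with level 1, is an identity of the shape $(x \wedge x'^*) \vee (x \wedge x'^*)^* \approx 1$ in $\mathbb{DMSH}1$, together with the fact that the lattice filters $[c)$ and $[c^*)$ are congruence filters of the full algebra (closure under $'^*$ of every element above $c$, not just of $c$ itself, is not automatic). These computations are the substance of the lemmas in \cite{Sa14a}, and your sketch gives no indication of how to carry them out --- indeed you flag this step as ``the main obstacle.'' Until it is filled in, the hard direction, and hence the theorem, remains unproven.
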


We are now ready to characterize the 
subdirectly irreducible algebras in $\mathbb{RUNO}1$.

\begin{Theorem} \label{SubIrr}
Let $\mathbf A \in \mathbb{RUNO}1$ with $|A| > 2$. Then the following are equivalent 
\begin{itemize}
\item[(1)] $ \mathbf A$ is simple 
\item[(2)] $\mathbf A$ is subdirectly irreducible 
\item[(3)] For every $x \in A$, if $x \neq 1$, then $x \land x'^* = 0$ \quad (SC). 
\item[(4)] $\mathbf A \in \{\mathbf A1, \mathbf A2, \mathbf A3, \mathbf A4, \mathbf A5\}$.
\end{itemize}
\end{Theorem}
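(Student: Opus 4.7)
The equivalences (1) $\Leftrightarrow$ (2) $\Leftrightarrow$ (3) are immediate from the already-cited Theorem \ref{sub}, since $\mathbb{RUNO}1$ is a subvariety of $\mathbb{DMSH}1$ and the hypothesis $|A|>2$ here is stronger than the $|A|\geq 2$ required there. Hence the genuine content is the equivalence of (3) with (4). The direction (4) $\Rightarrow$ (3) is a routine inspection: on each of the tables for $\mathbf{A}1,\ldots,\mathbf{A}5$ one verifies that $x\land x'^{*}=0$ for every $x\neq 1$.

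For (3) $\Rightarrow$ (4), the plan is to fix $\mathbf{A}\in\mathbb{RUNO}1$ with $|A|>2$ satisfying (SC) and determine its structure in three stages. First, I would invoke Theorem \ref{Height_Th} to conclude $h(\mathbf{A})\leq 2$, which combined with $|A|>2$ forces $h(\mathbf{A})=2$. Next, using the distributivity of semi-Heyting algebras recalled in Section~2, I would argue that any two distinct middle-level elements $c_{i},c_{j}$ satisfy $c_{i}\lor c_{j}=1$ and $c_{i}\land c_{j}=0$, and that a third such element $c_{3}$ would collapse via $c_{3}=c_{3}\land(c_{1}\lor c_{2})=0$. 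So the lattice reduct of $\mathbf{A}$ is either the $3$-element chain or the $4$-element Boolean lattice.

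Second, I pin down the dual hemimorphism. Since $a:=0\to 1$ is unorthodox, $a'=a$ and $a\notin\{0,1\}$, so $'$ must have a fixed point in the middle level. Because $'$ is an order-reversing involution sending $0$ to $1$, it permutes the middle elements; this forces the unique middle element to be fixed in the chain case, and both middle elements to be fixed in the Boolean case (otherwise $'$ would swap them, leaving no fixed point in the middle).

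Finally, I would determine the implication tables by a finite case analysis. In the chain case with middle element $m$, the axioms (SH2)--(SH4) force $1\to y=y$, $m\to 0=0$, and $m\to m=1$, while (SH2) with $y=1$ and (SH3) applied at $(x,y,z)=(m,0,1)$ restrict both $m\to 1$ and $0\to m$ to the set $\{m,1\}$; the resulting four possibilities match exactly $\mathbf{A}1$--$\mathbf{A}4$. In the $4$-element Boolean case, (SH2) restricts each unknown $\to$-entry to a two-element candidate set, and I would then combine (SH3), the $\land$-De Morgan law, and the regularity identity (R) to show that only the table of $\mathbf{A}5$ survives, up to the isomorphism that renames the two fixed middle elements. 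The main obstacle is this last stage in the Boolean case: many entries must be pinned down simultaneously, and ruling out spurious candidates requires a careful interplay of the semi-Heyting, De Morgan, and regularity axioms, in contrast to the chain case which reduces cleanly to a handful of binary choices.
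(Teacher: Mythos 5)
Your proposal follows essentially the same route as the paper: reduce (1)--(3) to Theorem \ref{sub}, use Theorem \ref{Height_Th} to force height $2$, conclude $|A|\in\{3,4\}$, and identify the algebra by finite case analysis. In fact you supply more detail than the paper does (the distributivity argument bounding the number of middle elements, the determination of $'$, and the pinning down of the $\to$ tables), where the paper simply asserts ``it is easy to see'' and ``up to isomorphism.''
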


\begin{Proof}
We know that (1), (2) and (3) are equivalent by Theorem \ref{sub}.  Now, suppose (3) holds in $\mathbf{A}$.  Then, by Theorem \ref{Height_Th} we know
the height is $\leq 2$.  Since $|A| > 2$, we conclude that the height of A =2.  Then it follows that $|A|=3$ or $|A|=4$.  If $|A|=3$ then it is easy to see that $\mathbf{A} \in \{ \mathbf{A}1, \mathbf{A}2, \mathbf{A}3, \mathbf{A}4\}$.  If $|A|=4$, then $\mathbf{A} =\mathbf{A}5$, up to isomorphism.  Thus, (4) holds in $\mathbf{A}$, whence (3) implies (4), while the statement that (4) implies (3) is routine to verify. Thus  the proof is complete. 
\end{Proof}

\begin{Corollary} \label{Runo1}
 $\mathbb{RUNO}1$ =  $\mathbb{V}(\{\mathbf A1, \mathbf A2, \mathbf A3, \mathbf A4, \mathbf A5\})$. 
\end{Corollary}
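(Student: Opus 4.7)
The proof is a one-line consequence of Theorem \ref{SubIrr} together with Birkhoff's subdirect representation theorem, so the plan is really to justify the two inclusions and to dispose of the potential edge case of small cardinality.

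For the inclusion $\mathbb{V}(\{\mathbf{A}1,\dots,\mathbf{A}5\}) \subseteq \mathbb{RUNO}1$, the work has already been done: the earlier theorem in the paper establishes that each $\mathbf{A}i \in \mathbb{RUNO}1$, and since $\mathbb{RUNO}1$ is an equationally defined class, it is closed under $\mathbf{H}$, $\mathbf{S}$, and $\mathbf{P}$, so it contains the variety generated by the $\mathbf{A}i$'s.

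For the reverse inclusion $\mathbb{RUNO}1 \subseteq \mathbb{V}(\{\mathbf{A}1,\dots,\mathbf{A}5\})$, the plan is to apply Birkhoff's theorem: every $\mathbf{A} \in \mathbb{RUNO}1$ embeds as a subdirect product of subdirectly irreducible members of $\mathbb{RUNO}1$. By Theorem \ref{SubIrr}, any subdirectly irreducible $\mathbf{A} \in \mathbb{RUNO}1$ with $|A|>2$ must be isomorphic to one of $\mathbf{A}1,\mathbf{A}2,\mathbf{A}3,\mathbf{A}4,\mathbf{A}5$. Thus every such subdirect factor lies in $\{\mathbf{A}1,\dots,\mathbf{A}5\}$, which puts $\mathbf{A}$ itself in the variety they generate.

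The only caveat is the small-cardinality case left open by Theorem \ref{SubIrr}, namely $|A|\leq 2$. The $1$-element algebra is trivially in the generated variety (as $\mathbf{H}\mathbf{S}\mathbf{P}$ always contains it). For $|A|=2$, note that in any algebra of $\mathbb{UNO}$ the element $a:=0\to 1$ satisfies $a'=a$, but in a $2$-element algebra $a\in\{0,1\}$, while $0'=1$ and $1'=0$, contradiction. So $\mathbb{RUNO}1$ contains no $2$-element algebra, and the subdirect factors arising from Birkhoff are forced into the list $\{\mathbf{A}1,\dots,\mathbf{A}5\}$. There is no real obstacle here; the substantial content of the corollary is already packaged in Theorem \ref{SubIrr}, so writing the proof is essentially a matter of invoking Birkhoff and citing that theorem.
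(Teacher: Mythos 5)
Your proof is correct and follows exactly the route the paper intends: the corollary is presented as an immediate consequence of Theorem \ref{SubIrr} via Birkhoff's subdirect representation theorem, which is precisely what you spell out. Your additional observation that no $2$-element algebra can satisfy the unorthodoxy axiom (since $0\to 1$ would have to be a fixed point of $'$) is a nice touch that tidies up the $|A|\leq 2$ edge case the paper leaves implicit.
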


\begin{Remark} Thus, we have achieved an axiomatization of the variety 
$\mathbb{V}(\{\mathbf A1, \mathbf A2, \mathbf A3, \mathbf A4, \mathbf A5\})$,
thus solving {\bf PROBLEM C} posed earlier.
\end{Remark}

In the rest of this section we present some remarkable properties of the variety $\mathbb{RUNO}1$ as consequences of Corollary \ref{Runo1}.

\subsection{Discriminator}

\

\medskip

A ternary term $t(x, y, z)$ is a discriminator term for an algebra A if, for $a, b, c \in A$,
 \[
 t(a,b,c) = \begin{cases}
                c & \text{if } a= b \\
                a  & \text{if } a \neq b.
                 \end{cases}
\]

A variety $\mathbb{V}$ of algebras is a discriminator variety if there is a class $\mathbb{K}$ of algebras which generates $\mathbb{V}$ such that there is a ternary term $t(x, y, z)$ which is a discriminator term for every member of $\mathbb{K}$. 

\begin{Theorem} \label{Runo1 discriminator}
$\mathbb{RUNO}1$ is a discriminator variety. 
\end{Theorem}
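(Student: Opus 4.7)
The plan is to exploit Corollary \ref{Runo1}, which tells us that $\mathbb{RUNO}1 = \mathbb{V}(\{\mathbf{A}1,\mathbf{A}2,\mathbf{A}3,\mathbf{A}4,\mathbf{A}5\})$. By the very definition of a discriminator variety, it suffices to produce one ternary term $t(x,y,z)$ in the language $\mathbf{L}$ that simultaneously realises the discriminator function on all five generating algebras. My strategy is to build $t$ from two simpler pieces: a unary term $u$ that detects whether its argument equals $1$, and a binary term $eq$ that detects equality.

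For the first piece, I would take $u(x) := x \wedge x'^{*}$, where $^{*}$ is the pseudocomplement $x \to 0$. By Theorem \ref{SubIrr}, every $\mathbf{A}i$ satisfies (SC), which says exactly $u(x)=0$ whenever $x\ne 1$; and the semi-Heyting identities together with $1'=0$ force $u(1)=1\wedge 0^{*}=1\wedge 1=1$. Hence $u$ is the characteristic term of $\{1\}$ on each $\mathbf{A}i$. For the second piece, using the derived Heyting implication $x \to_H y := x\to(x\wedge y)$ and the lemma from Section 2 stating $a\to_H b = 1 \iff a\le b$, the term
$$eq(x,y) \; := \; (x\to_H y)\wedge(y\to_H x)$$
satisfies $eq(a,b)=1$ iff $a=b$, on every semi-Heyting algebra. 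Composing, set $e(x,y) := u\bigl(eq(x,y)\bigr)$, so that $e(a,b)=1$ when $a=b$ and $e(a,b)=0$ otherwise, on each $\mathbf{A}i$.

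The discriminator can now be assembled in the usual Boolean style. Because each $\mathbf{A}i$ has a distributive lattice reduct and obeys $0'=1$, $1'=0$, the term
$$t(x,y,z) \; := \; \bigl(e(x,y)\wedge z\bigr)\;\vee\;\bigl(e(x,y)'\wedge x\bigr)$$
evaluates to $(1\wedge c)\vee(0\wedge a)=c$ when $a=b$, and to $(0\wedge c)\vee(1\wedge a)=a$ when $a\ne b$; hence $t$ is a discriminator term on each $\mathbf{A}i$. Invoking Corollary \ref{Runo1} one more time, this shows that $\mathbb{RUNO}1$ is a discriminator variety.

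The only real obstacle is the uniformity step: we need a single term that works on all five algebras simultaneously, not a case-by-case construction. That obstacle is removed precisely by (SC), which Theorem \ref{SubIrr} has already shown to be common to every subdirectly irreducible in $\mathbb{RUNO}1$; beyond this, the verifications are routine computations on finite tables and do not need to be written out in full.
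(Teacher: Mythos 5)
Your proof is correct and takes essentially the same route as the paper's: both apply the unary term $d_1(x)=x\wedge x'^{*}$ (which by (SC) is the characteristic term of $\{1\}$ on each $\mathbf{A}i$) to an equality-detecting binary term to obtain a $\{0,1\}$-valued switch $e(x,y)$, and then assemble the discriminator in the Boolean style as $(e\wedge z)\vee(\bar{e}\wedge x)$. The only, immaterial, differences are that the paper's equality test is $(x\vee y)\to(x\wedge y)$ rather than your $(x\to_H y)\wedge(y\to_H x)$, and that it complements the switch with ${}^{*}$ rather than $'$; both choices work since the switch only takes the values $0$ and $1$.
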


\begin{Proof} 
Let $d_1(x) = x \land x'^*$.   
Let
$T(x, y, z)$
  $:=[z \land  d_1((x \lor y) \to (x \land y))] \lor [x \land \{d_1((x \lor y) \to (x \land y))\}^*]$.
 Then it is routine to verify that the term $T(x, y, z)$ is a ternary discriminator term on each of the algebras $\mathbf{Ai}$, {\bf i}=1,2, $\cdots$, 5.    
 Then the theorem follows from Corollary \ref{Runo1}.
  \end{Proof}
  
  The following theorem, due to Bulman-Fleming, Keimel, and
Werner. is well-known (See \cite[Page 165, Theorem 9.4]{BuSa81} for a proof).
  \begin{theorem} \label{Boolean Product}
  Every member of a discriminator variety is isomorphic to a Boolean product
of simple algebras from the variety. 
  \end{theorem}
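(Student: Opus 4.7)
\textbf{Proof proposal for Theorem \ref{Boolean Product}.}

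The plan is to represent an arbitrary algebra $\mathbf A$ in a discriminator variety as an algebra of global sections of a Boolean sheaf whose stalks are the simple quotients of $\mathbf A$. The backbone of the argument is the interplay between the discriminator term $t(x,y,z)$ and the Boolean algebra of factor congruences.

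First I would verify the standard consequences of the existence of a discriminator term. From $t(x,x,z)\approx z$, $t(x,y,x)\approx x$, and $t(x,y,y)\approx x$ (the last two holding pointwise on the generators and hence as identities in the variety), one derives the Maltsev term $p(x,y,z):=t(t(x,y,z),t(x,y,x),z)$ witnessing congruence-permutability, and similar terms giving congruence-distributivity. Thus $\mathbb V$ is arithmetical, so its congruences permute and the Chinese Remainder Theorem applies. A further, crucial consequence is that every subdirectly irreducible member of $\mathbb V$ is simple (because in any subdirectly irreducible algebra the discriminator term forces the monolith to be the full congruence).

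Next I would set up the Stone-type space. For $\mathbf A\in\mathbb V$, let $X$ be the set of maximal congruences of $\mathbf A$. For each pair $a,b\in A$ form the set $U(a,b):=\{\theta\in X : (a,b)\in \theta\}$; using the discriminator term, the sets $U(a,b)$ form a basis of clopens for a topology on $X$ that makes $X$ a Boolean space. Here the key identities to exploit are $t(a,b,c)=c$ modulo $\theta$ iff $(a,b)\in\theta$, and $t(a,b,c)=a$ modulo $\theta$ iff $(a,b)\notin\theta$; these let one show that $U(a,b)$ and its complement are both of the form $U(\cdot,\cdot)$, that finite intersections are again basic clopens, and that distinct maximal congruences are separated by some $U(a,b)$.

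The final step is to build the Boolean product. For each $\theta\in X$ let $A_\theta:=\mathbf A/\theta$, which is simple by the preceding reduction. Define $\Phi:\mathbf A\to\prod_{\theta\in X}A_\theta$ by $\Phi(a)(\theta):=a/\theta$. Injectivity follows because $\bigcap X=\Delta_A$ (this is where we use that $\mathbb V$ is semisimple, itself a consequence of the discriminator term). The equalizer $[[\Phi(a)=\Phi(b)]]=U(a,b)$ is clopen by construction. The patchwork property---if $a_1,\ldots,a_n\in A$ and $U_1,\ldots,U_n$ is a clopen partition of $X$, then there is $a\in A$ agreeing with $a_i$ on $U_i$---follows from congruence-permutability together with the Chinese Remainder Theorem applied to the factor congruences corresponding to the clopens. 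This shows $\Phi$ is a Boolean product representation of $\mathbf A$ by simple algebras.

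The main obstacle, and the only place where one must be genuinely careful, is the patchwork step: one has to pass from local agreement on a finite clopen partition to the existence of a global element of $\mathbf A$. The trick is to use the discriminator term to write the required element explicitly in terms of the $a_i$ and suitable elements witnessing the factor congruences, which in turn correspond to the clopen partition via Stone duality. Everything else reduces to careful bookkeeping with the identities satisfied by $t(x,y,z)$.
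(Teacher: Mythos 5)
The paper does not actually prove this statement: it is quoted as a classical result of Bulman--Fleming, Keimel and Werner, with an explicit pointer to \cite[Theorem~IV.9.4]{BuSa81} for the proof. So the only meaningful comparison is with that standard proof, and your sketch is essentially it: reduce to the arithmetical/semisimple setting, take the space of maximal congruences as a Boolean space with the equalizer sets $U(a,b)$ as clopens, and patch local data using the switching term built from the discriminator. That is the right architecture and matches the cited source.

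Two concrete points need repair or expansion. First, your Mal'cev term is wrong: with $p(x,y,z):=t(t(x,y,z),t(x,y,x),z)=t(t(x,y,z),x,z)$ one gets $p(x,y,y)=t(x,x,y)=y$, not $x$. This is harmless only because the discriminator term itself already satisfies $t(x,y,y)\approx x$ and $t(x,x,y)\approx y$ on the generating class, hence in the whole variety, so $t$ \emph{is} a Mal'cev term and no auxiliary $p$ is needed. Second, the topological side is glossed over at the one place where it is not routine: you must show that the complement of $U(a,b)$ is again a basic clopen (this uses the switching term $s(x,y,u,v)=t(t(x,y,u),t(x,y,v),v)$, via $U(a,b)^c=U(s(a,b,c,d),d)$ for suitable $c\neq d$, or equivalently the fact that principal congruences form a relatively complemented, hence Boolean, sublattice of compact congruences) and that the resulting space is \emph{compact}; compactness of the maximal spectrum is exactly what Stone duality for that Boolean algebra of principal congruences delivers, and it is the ingredient your sketch silently assumes when it invokes ``clopen partitions.'' With those two items filled in, the argument is the standard one and is correct.
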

  
    The following corollary is immediate from Theorem \ref{Boolean Product} and  
  Theorem \ref{Runo1 discriminator}.
  
  \begin{Corollary}\label{discriminator} 
  Every member of $\mathbb {RUNO}1$ is isomorphic to a Boolean product
of $\mathbf{Ai}, {\bf i}=1,2, \cdots, 5$. 
  \end{Corollary}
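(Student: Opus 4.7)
The plan is to chain together the two theorems just proved. Since Theorem \ref{Runo1 discriminator} establishes that $\mathbb{RUNO}1$ is a discriminator variety, Theorem \ref{Boolean Product} applies directly and yields that every $\mathbf A \in \mathbb{RUNO}1$ is isomorphic to a Boolean product of simple members of $\mathbb{RUNO}1$. The only remaining task is to identify, once and for all, the class of simple algebras in $\mathbb{RUNO}1$; the corollary then follows by substituting this explicit class into the Boolean-product representation.

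To identify the simple algebras, I would appeal to Theorem \ref{SubIrr}, which for $|A|>2$ asserts the equivalence of simplicity, subdirect irreducibility, condition (SC), and membership in $\{\mathbf A1,\mathbf A2,\mathbf A3,\mathbf A4,\mathbf A5\}$. Thus every simple algebra in $\mathbb{RUNO}1$ of cardinality greater than $2$ must be (up to isomorphism) one of the five algebras $\mathbf A i$.

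The one small point to dispatch is that Theorem \ref{SubIrr} is stated under the hypothesis $|A|>2$, so the possibility of a simple algebra of cardinality $\leq 2$ needs to be ruled out. The trivial one-element algebra is simple only by convention and contributes nothing to a Boolean product representation. A two-element simple algebra in $\mathbb{RUNO}1$ would have to satisfy the unorthodox axiom $(0\to 1)'=0\to 1$; but in any two-element member of $\mathbb{DHMSH}$ the only candidates for $0\to 1$ are $0$ and $1$, and neither is a fixed point of ${}'$ since $0'=1$ and $1'=0$ by (E2), (E3). Hence there is no two-element algebra in $\mathbb{RUNO}1$ at all, and the simples of $\mathbb{RUNO}1$ are exactly the five algebras $\mathbf A1,\dots,\mathbf A5$.

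There is no real obstacle in this proof; the work has already been done in Theorem \ref{Runo1 discriminator} and Theorem \ref{SubIrr}. The only step demanding a moment of care is confirming that the Boolean-product representation draws its factors from the closed list $\{\mathbf A1,\mathbf A2,\mathbf A3,\mathbf A4,\mathbf A5\}$ rather than some larger class of simples, and the two-element calculation above closes that gap. The corollary therefore follows in one line by combining Theorem \ref{Boolean Product} with Theorem \ref{Runo1 discriminator} and the explicit description of the simples afforded by Theorem \ref{SubIrr}.
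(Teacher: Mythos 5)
Your proposal is correct and follows exactly the route the paper intends: the corollary is stated as immediate from Theorem \ref{Boolean Product} and Theorem \ref{Runo1 discriminator}, with the simple algebras identified via Theorem \ref{SubIrr}. Your additional observation that the unorthodoxy axiom $(0\to 1)'=0\to 1$ excludes any two-element member of $\mathbb{RUNO}1$ is a correct and welcome bit of care that the paper leaves implicit.
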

  
  \begin{Corollary}\label{Primal}
  The algebras $\mathbf{Ai}, {\bf i}=1,2, \cdots, 5$ are primal (just like the two element Boolean algebra).
  \end{Corollary}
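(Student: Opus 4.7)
The plan is to invoke the classical Foster--Pixley characterization of primal algebras: a finite algebra $\mathbf{A}$ with $|A|\geq 2$ is primal if and only if it is (i) simple, (ii) has no proper subalgebras, (iii) has no nontrivial automorphisms, and (iv) admits a ternary discriminator term. Since every $\mathbf{A}i$ is finite, it suffices to check these four conditions for each $i \in \{1,2,3,4,5\}$, and each piece has, in fact, already been recorded earlier in the paper.

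First, each $\mathbf{A}i$ lies in $\mathbb{RUNO}1$ by the theorem following the definition of $\mathbb{RUNO}1$, and each has more than two elements. Simplicity therefore follows directly from Theorem \ref{SubIrr}: the five algebras are exactly the subdirectly irreducible members of $\mathbb{RUNO}1$, and the theorem equates simplicity with subdirect irreducibility on that list. Conditions (ii) and (iii) were stated explicitly in parts (a3) and (a4) of the opening Remark, where it was observed that none of the $\mathbf{A}i$ has a proper subalgebra or a nontrivial automorphism; these are routine inspections of the tables defining the operations.

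For condition (iv), the discriminator term $T(x,y,z) = [z \land d_1((x \lor y) \to (x \land y))] \lor [x \land \{d_1((x \lor y) \to (x \land y))\}^{*}]$ constructed in the proof of Theorem \ref{Runo1 discriminator} was verified on each $\mathbf{A}i$ and realizes the ternary discriminator on each of them. Hence all four Foster--Pixley conditions hold for every $\mathbf{A}i$, and the corollary follows.

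There is no real obstacle here beyond assembling the already-proved ingredients; the only judgement call is which formulation of primality to cite. I would prefer the Foster--Pixley formulation above because every clause maps to exactly one item already established in the paper, making the argument essentially a one-line invocation once the checklist is displayed.
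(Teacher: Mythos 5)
Your proposal is correct and matches the argument the paper implicitly relies on: the corollary is stated without proof, immediately after the discriminator term is exhibited, and the intended justification is exactly the Foster--Pixley checklist you assemble (discriminator term from Theorem \ref{Runo1 discriminator}, no proper subalgebras and no nontrivial automorphisms from the opening Remark). The only cosmetic point is that condition (i) is redundant, since a nontrivial algebra with a discriminator term is automatically simple.
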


   \begin{Corollary}\
   Every algebra in the variety $\mathbb{V}(\mathbf{Ai})$, ${\bf i} =1,2, \cdots, 5$ is a Boolean Power of $\mathbf{A}i$.
    \end{Corollary}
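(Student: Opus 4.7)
The plan is to bootstrap this corollary from two previously established facts: the primality of each $\mathbf{A}i$ (Corollary \ref{Primal}) and the Bulman-Fleming--Keimel--Werner representation (Theorem \ref{Boolean Product}). The target statement is essentially Foster's classical theorem on primal algebras, so the work reduces to verifying that the hypotheses of Theorem \ref{Boolean Product} apply to $\mathbb{V}(\mathbf{A}i)$ individually, and that in this one-generator setting the only simple factor that can appear is $\mathbf{A}i$ itself.

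First, I would observe that $\mathbb{V}(\mathbf{A}i)$ is a subvariety of the discriminator variety $\mathbb{RUNO}1$ (Theorem \ref{Runo1 discriminator}); indeed, the explicit ternary discriminator term $T(x,y,z)$ exhibited in that proof acts as a discriminator on $\mathbf{A}i$, so $\mathbb{V}(\mathbf{A}i)$ is itself a discriminator variety generated by $\{\mathbf{A}i\}$. Applying Theorem \ref{Boolean Product}, every member of $\mathbb{V}(\mathbf{A}i)$ is isomorphic to a Boolean product of simple algebras drawn from $\mathbb{V}(\mathbf{A}i)$.

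Next I would pin down those simple factors. Because $\mathbb{RUNO}1$ is arithmetical (and in particular congruence distributive), J\'onsson's lemma applies to the subvariety $\mathbb{V}(\mathbf{A}i)$, and since $\mathbf{A}i$ is finite we have $\mathbb{V}(\mathbf{A}i)_{SI} \subseteq HS(\mathbf{A}i)$. By Theorem \ref{SubIrr}, $\mathbf{A}i$ is simple (so $H(\mathbf{A}i)$ contributes only $\mathbf{A}i$ and the trivial algebra), and by item (a3) of the Remark $\mathbf{A}i$ has no proper nontrivial subalgebras. Consequently the unique nontrivial simple algebra in $\mathbb{V}(\mathbf{A}i)$ is $\mathbf{A}i$ itself. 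Combining this with the Boolean product representation from the previous step, every algebra in $\mathbb{V}(\mathbf{A}i)$ is a Boolean product of copies of $\mathbf{A}i$, which is by definition a Boolean power of $\mathbf{A}i$.

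The main obstacle, such as it is, lies in step two rather than in any calculation: one must justify the reduction from ``simples of $\mathbb{V}(\mathbf{A}i)$'' to $\mathbf{A}i$ alone, which rests on J\'onsson's lemma together with the no-proper-subalgebras property. Once those are invoked, the statement is a direct packaging of Theorem \ref{Boolean Product} and Corollary \ref{Primal}, and no new computation inside any $\mathbf{A}i$ is required.
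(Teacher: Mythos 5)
The paper offers no written proof of this corollary; it is presented as immediate from Theorem \ref{Runo1 discriminator}, Theorem \ref{Boolean Product} and Corollary \ref{Primal}, and your proposal fleshes out exactly that intended route (discriminator term on $\mathbf{A}i$, Bulman-Fleming--Keimel--Werner, then identification of the simple stalks via J\'onsson's lemma and the absence of proper subalgebras). The one soft spot is your closing claim that a Boolean product of copies of $\mathbf{A}i$ is ``by definition'' a Boolean power of $\mathbf{A}i$: a Boolean power is a particular Boolean product (the algebra of \emph{all} continuous functions from a Boolean space into the discrete algebra), and the identification of the two is a lemma, not a definition. It does hold here, and for the reason you have already put on the table: since $\mathbf{A}i$ has no proper subalgebras, every element of $A_i$ is the value of a constant term, so every constant function lies in the Boolean product; the clopen-equalizer condition then forces each member to be continuous, and the patchwork property conversely yields every continuous function. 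With that half-page of justification (or a citation of Foster's theorem on primal algebras, which is the standard packaging) supplied in place of ``by definition,'' your argument is complete and agrees with what the paper takes for granted.
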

  
  \begin{Theorem} {\rm(Burris} \cite{Bu92}\rm)
  Every discriminator variety has unary unification type.
  \end{Theorem}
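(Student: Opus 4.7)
The plan is to exploit two structural facts about discriminator varieties: first, that in a discriminator variety $\mathbb{V}$ with ternary discriminator term $t(x,y,z)$, any finite system of equations collapses into a single equation (by nesting the discriminator), so we may assume throughout that the unification problem consists of a single equation $s \approx t$ in finitely many variables; and second, that the structure theorem underlying Corollary \ref{discriminator} (in its general form for arbitrary discriminator varieties) represents every member as a Boolean product of simple algebras, which gives very tight control over finitely presented objects in $\mathbb{V}$.

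First I would reformulate the assertion algebraically. A unifier of $s \approx t$ is a substitution $\sigma$ with $\mathbb{V} \models \sigma(s) \approx \sigma(t)$, and $\mathbb{V}$ has \emph{unary} (unitary) unification type iff every unifiable equation admits a most general unifier, i.e.\ one through which every other unifier factors modulo $\mathbb{V}$-provable equality. Equivalently, for the finitely presented algebra $\mathbf{A} := \mathbf{F}_{\mathbb{V}}(X)/\theta(s,t)$, one must exhibit a universal homomorphism $\mathbf{A} \to \mathbf{F}_{\mathbb{V}}(Y)$ for some auxiliary variable set $Y$.

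Next, using the discriminator term I would build a canonical ``failure indicator'' term $e(s,t)$ that, on any simple algebra in $\mathbb{V}$, takes the value $0$ precisely when $s$ and $t$ agree and a designated nonzero value otherwise. The discriminator-definable idempotents on a free algebra form a Boolean algebra, and projecting onto the complement of the failure locus yields a substitution $\sigma$ that one can write down explicitly: for each variable $x_i$, $\sigma$ sends $x_i$ to a term of the form $t(e(s,t), c, x_i)$ for fresh variables $c$ drawn from the target set. The point is that $\sigma$ collapses the failure part of any stalk to a witness of the equation and leaves the success part untouched, so $\sigma$ unifies $s \approx t$; and any other unifier $\tau$ factors through $\sigma$ because on each stalk it must already live in the success part, which $\sigma$ presents universally.

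The main obstacle will be the verification of universality, not mere unifiability of $\sigma$. This amounts to a projectivity argument for $\mathbf{A}$ over free algebras in $\mathbb{V}$: one must show that any unifying homomorphism factors through the canonical one constructed via the discriminator-definable idempotents. The cleanest route I would take is to reduce stalkwise using the Boolean product representation so that the factorization becomes visible on simple algebras, where the ternary discriminator makes the required splitting into ``equation holds'' and ``equation fails'' components fully explicit, and then to glue the local factorizations via the Boolean skeleton --- whose own unification type is unitary because that of Boolean algebras is. Once this local-to-global step is carried out carefully, the theorem follows.
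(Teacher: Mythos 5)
The paper gives no proof of this statement: it is quoted as a theorem of Burris \cite{Bu92}, so your attempt can only be judged on its own merits against Burris's argument. Your overall strategy --- reduce a finite system to a single equation via the discriminator, then write down an explicit most general unifier using a discriminator-definable ``if--then--else'' --- is the right one, but the construction you actually write down has a genuine gap. With the paper's convention ($t(a,b,c)=c$ if $a=b$, and $t(a,b,c)=a$ otherwise), the term $t(e(s,t),c,x_i)$ is not an if--then--else: it returns $x_i$ only when $e(s,t)=c$ and returns $e(s,t)$ otherwise, so it does not ``leave the success part untouched and collapse the failure part to a witness.'' What is needed is the quaternary switching term $q(x,y,u,v):=t(t(x,y,u),t(x,y,v),v)$, which on simple algebras equals $u$ when $x=y$ and $v$ when $x\neq y$. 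More seriously, the ``else'' slot cannot be filled with fresh variables: on a stalk where $s=t$ fails under the given assignment, sending $x_i$ to an arbitrary fresh $c$ gives no reason for $\sigma(s)\approx\sigma(t)$ to hold. The correct construction presupposes some unifier $\varepsilon$ (this is exactly where the hypothesis that the problem is unifiable enters) and sets $\sigma(x_i):=q(s,t,x_i,\varepsilon(x_i))$.

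Once $\sigma$ is defined this way, the stalkwise projectivity and Boolean-skeleton gluing you propose are unnecessary, and as sketched would be hard to make precise (the factorization of an arbitrary unifier $\tau$ through $\sigma$ is a statement about substitutions into free algebras, not about a single Boolean product). Most generality is a one-line equational computation: for any unifier $\tau$ one has $\tau(\sigma(x_i))=q(\tau(s),\tau(t),\tau(x_i),\tau(\varepsilon(x_i)))\approx\tau(x_i)$ modulo $\mathbb{V}$, since $\mathbb{V}\models\tau(s)\approx\tau(t)$ and $q(x,x,u,v)\approx u$ is an identity of $\mathbb{V}$; hence $\tau=\tau\circ\sigma$ up to $\mathbb{V}$-equality. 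The only place where reasoning on simple algebras is needed is the check that $\sigma$ is itself a unifier, and there it is a direct case split at each stalk: if $s=t$ holds, $\sigma$ acts as the identity on the variables; if not, $\sigma$ acts as $\varepsilon$, which unifies by assumption. I recommend you rework your construction along these lines.
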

  
   \begin{Corollary}
   All subvarieties of $\mathbb {RUNO}1$  have unitary unification type. 
    \end{Corollary}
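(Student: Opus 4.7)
The plan is to derive this directly from Burris's theorem stated just above, by verifying that every subvariety of $\mathbb{RUNO}1$ is itself a discriminator variety. The two crucial inputs are already on the table. First, Theorem \ref{SubIrr} tells us that the nontrivial subdirectly irreducible members of $\mathbb{RUNO}1$ are exactly $\mathbf{A}1, \mathbf{A}2, \mathbf{A}3, \mathbf{A}4, \mathbf{A}5$. Second, in the proof of Theorem \ref{Runo1 discriminator} a single ternary term $T(x,y,z)$ was exhibited that serves as a discriminator simultaneously on all five of these algebras.

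Now let $\mathbb{W}$ be any subvariety of $\mathbb{RUNO}1$. Its nontrivial subdirectly irreducible members form a subclass $\mathbb{K} \subseteq \{\mathbf{A}1, \mathbf{A}2, \mathbf{A}3, \mathbf{A}4, \mathbf{A}5\}$, because a subdirectly irreducible algebra in $\mathbb{W}$ is in particular a subdirectly irreducible algebra in $\mathbb{RUNO}1$. By Birkhoff's subdirect representation theorem, $\mathbb{W} = \mathbb{V}(\mathbb{K})$. Since the very same term $T(x,y,z)$ remains a discriminator on every member of $\mathbb{K}$, the subvariety $\mathbb{W}$ satisfies the definition of a discriminator variety. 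Applying the theorem of Burris to $\mathbb{W}$ then yields that $\mathbb{W}$ has unitary unification type, which is the desired conclusion.

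No real obstacle arises here; the step that deserves a brief mention in the write-up is simply the closure of the class of discriminator varieties under taking subvarieties, which in this particular setting is immediate from the finite classification of subdirectly irreducibles in Theorem \ref{SubIrr} combined with the uniformity of the discriminator term $T$ across all five generators.
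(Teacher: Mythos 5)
Your argument is correct and is exactly the route the paper intends: the corollary is stated as immediate because every subvariety of $\mathbb{RUNO}1$ is again a discriminator variety (its nontrivial subdirectly irreducibles form a subclass of $\{\mathbf{A}1,\dots,\mathbf{A}5\}$ on which the same term $T$ is a discriminator), so Burris's theorem applies. Your write-up just makes explicit the standard closure of discriminator varieties under subvarieties, which the paper leaves unstated.
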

    
     It was shown in \cite{BuWe79} that the first-order theory of finitely generated discriminator variety is decidable.  Hence the following corollary is immediate.
  
\begin{Corollary}
 The first-order theory of each $\mathbb{V}(\mathbf{Ai}), {\bf i}=1,2, \cdots, 5$, is decidable.\\
\end{Corollary}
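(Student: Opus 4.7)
The plan is to observe that each $\mathbb{V}(\mathbf{A}i)$ satisfies the two hypotheses of the Burris--Werner theorem cited just above, namely (i) being finitely generated, and (ii) being a discriminator variety. Hypothesis (i) is immediate, since by definition $\mathbb{V}(\mathbf{A}i)$ is generated by the single finite algebra $\mathbf{A}i$, and each of $\mathbf{A}1,\mathbf{A}2,\mathbf{A}3,\mathbf{A}4$ has three elements while $\mathbf{A}5$ has four. So the substantive point is to justify hypothesis (ii) for each of the five subvarieties.

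For (ii), the key observation is that the discriminator term $T(x,y,z)$ exhibited in the proof of Theorem \ref{Runo1 discriminator} was shown, by routine verification, to be a discriminator term on each of the five algebras $\mathbf{A}1,\ldots,\mathbf{A}5$ individually. Thus for every fixed $i$, the singleton class $\{\mathbf{A}i\}$ generates $\mathbb{V}(\mathbf{A}i)$ and admits the very same ternary term $T$ as a discriminator term on its unique member. By the definition of a discriminator variety, this already shows that $\mathbb{V}(\mathbf{A}i)$ is a discriminator variety for each $i\in\{1,2,3,4,5\}$.

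With both hypotheses in hand, I would simply invoke the Burris--Werner theorem (as cited from \cite{BuWe79}) to conclude that the first-order theory of $\mathbb{V}(\mathbf{A}i)$ is decidable, for each $i=1,\ldots,5$.

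There is no substantive obstacle here; the corollary is a direct packaging of earlier results. The only small subtlety worth flagging explicitly is that one should not simply cite the fact that $\mathbb{RUNO}1$ itself is a finitely generated discriminator variety and claim that decidability transfers automatically to subvarieties: in general, decidability of the first-order theory of a variety need not descend to its subvarieties. That potential pitfall is precisely why the argument above reruns the discriminator verification at the level of each individual generator $\mathbf{A}i$, so that Burris--Werner is applied separately (and directly) to each of the five varieties $\mathbb{V}(\mathbf{A}i)$.
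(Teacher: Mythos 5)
Your proposal is correct and follows essentially the same route as the paper, which treats the corollary as immediate from the Burris--Werner theorem once each $\mathbb{V}(\mathbf{A}i)$ is seen to be a finitely generated discriminator variety; you merely make explicit the two hypotheses (single finite generator, and the same term $T$ serving as a discriminator on each $\mathbf{A}i$) that the paper leaves implicit. Your cautionary remark about decidability not automatically descending to subvarieties is a reasonable point of care, and your fix --- applying Burris--Werner directly to each $\mathbb{V}(\mathbf{A}i)$ --- is exactly what the paper intends.
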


\subsection{Properties AP, SAP, ES}

\

\medskip
Amalgamation Property (AP, for short) is an important property for varieties to have.  We refer the reader to \cite{CzPi99} for its history and its connection to algebraic logic (see also \cite{CoSa24b}).

In this subsection we wish to investigate whether $\mathbb{V}(\mathbf{A}i)$, $i=1,2, \cdots, 5$ have AP, SAP and ES.
\begin{definition}
By a {\it diagram} in a class $\mathbb{K}$ of algebras we mean a quintuple $\langle A, f, B, g, C\rangle$ with $\mathbf A$, $\mathbf B$, $\mathbf C \in \mathbb K$ and $f : \mathbf A \mapsto \mathbf B$ and  $g : \mathbf A \mapsto \mathbf C$ embeddings. By an {\it amalgam}  of this diagram in $\mathbb K$ we mean a triple $\langle f_1, g_1, \mathbf D\rangle$ with $\mathbf D \in \mathbb K$ and with $f_1 : \mathbf B \mapsto \mathbf D$ and $g_1 : C \mapsto D$ embeddings such that $f_1\circ f= g_1\circ g$. If such an amalgam exists, then we say that the diagram is {\it amalgamable} in $\mathbb K$.  If such an amalgam exists and satisfies the condition:  $f_1(B) \cap g_1 (C)=f_1 circ f_(A)$, then we say that the diagram is {\it strongly amalgamable} in $\mathbb K$. 
We say that $\mathbb{K}$ has the Amalgamation Property (AP, for short) if every diagram in $\mathbb{K}$ is amalgamable.  $\mathbb{K}$ has the Strong Amalgamation Property (SAP, for short) if every diagram in $\mathbb{K}$ is strongly amalgamable.
\end{definition}

\begin{Remark}
Since f and g in a diagram are embeddings, we can, without loss of generality, simply think of a diagram $\langle A, f, B, g, C\rangle$ as a triple 
$\langle A, B, C\rangle$ such that $\mathbf A \subseteq \mathbf B \cap \mathbf C$.  Accordingly, the definition of (AP) can be restated.
\end{Remark}

For the rest of this subsection, we let $\mathbb{S}:= \{\mathbf {Ai}, {\bf i}=1,2, \cdots, 5\}$.
Note that $\mathbb{S}$ is prcisely the the class of simple, non-singleton members of $\mathbb {RUNO}1$.

\begin{Lemma} \label{APLemma}
Every non-empty subclass of $\mathbb{S}$ has AP.
\end{Lemma}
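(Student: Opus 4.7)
The plan is to exploit two structural facts recorded in the Remark following the definition of $\mathbf{A}1,\dots,\mathbf{A}5$: none of these five algebras has a proper subalgebra, and $\mathbb{S}$ is (up to isomorphism) exhausted by them. Together these will force every embedding between members of $\mathbb{S}$ to be surjective, and hence an isomorphism, which essentially trivializes the amalgamation problem.

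More concretely, I would fix a nonempty subclass $\mathbb{K}\subseteq\mathbb{S}$ and a diagram $\langle \mathbf{A}, f, \mathbf{B}, g, \mathbf{C}\rangle$ in $\mathbb{K}$, and first observe that $f(\mathbf{A})$ is a subalgebra of $\mathbf{B}$; since $\mathbf{B}\in\mathbb{S}$ has no proper subalgebras, $f(\mathbf{A})=\mathbf{B}$, so $f$ is an isomorphism, and the same reasoning applies to $g$. The amalgam is then built by taking $\mathbf{D}:=\mathbf{B}$, $f_1:=\mathrm{id}_{\mathbf{B}}$, and $g_1:=f\circ g^{-1}\colon\mathbf{C}\to\mathbf{B}$; since $g_1$ is a composition of isomorphisms it is an embedding, and $f_1\circ f=f=f\circ g^{-1}\circ g=g_1\circ g$ gives the required commuting square. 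Since $\mathbf{D}=\mathbf{B}\in\mathbb{K}$, the amalgam lies in $\mathbb{K}$ as needed.

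I do not expect a genuine obstacle here: the entire content of the lemma is that subalgebra-minimality of the five generating algebras collapses every diagram in a subclass of $\mathbb{S}$ into a triangle of isomorphisms that can be closed by a single choice of isomorphism. As a bonus, the same witness $\langle f_1,g_1,\mathbf{D}\rangle$ also establishes the Strong Amalgamation Property for $\mathbb{K}$, since $f_1(\mathbf{B})\cap g_1(\mathbf{C})=\mathbf{B}=f_1(f(\mathbf{A}))$, which fits well with the paper's stated goal of proving SAP for all subvarieties of $\mathbb{RUNO}1$ further on.
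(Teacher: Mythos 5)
Your proposal is correct and follows essentially the same route as the paper: the paper's proof also rests on the observation that members of $\mathbb{S}$ have no proper subalgebras, so every diagram collapses to (isomorphic copies of) a single algebra and is trivially amalgamable. You merely spell out the resulting triangle of isomorphisms explicitly, which is a harmless (indeed slightly more careful) elaboration of the paper's terser argument.
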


\begin{proof}
Let $\mathbb{S'} \subset \mathbb{S}$ be nonempty.  Recall that every member of $\mathbb{S'} $ has no proper subalgebras.  Hence the only possible diagrams are those in which
all three algebras are equal to each other; and such diagrams can easily be amalgamated, which implies that $\mathbb{S'}$ has AP.
\end{proof}
\begin{Theorem} {\rm(Werner \cite[page 27]{We78}\rm)}  \label{AP}
A discriminator variety has the amalgamation property iff the class of its simple, non-singleton members has the amalgamation property.
\end{Theorem}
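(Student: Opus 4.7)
The strategy is to exploit the Boolean product representation of algebras in a discriminator variety (Theorem \ref{Boolean Product}), which reduces questions about arbitrary members of the variety to questions about simple stalks, and to move between these two levels using stalk projections and gluing.

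For the ($\Rightarrow$) direction, suppose the discriminator variety $\mathbb{V}$ has AP, and let $\langle \mathbf A, \mathbf B, \mathbf C\rangle$ be a diagram whose entries are simple non-singleton members of $\mathbb{V}$. By hypothesis there is an amalgam $\langle f_1, g_1, \mathbf D\rangle$ with $\mathbf D \in \mathbb{V}$. I would locate a simple amalgam inside $\mathbf D$ as follows: represent $\mathbf D$ as a Boolean product of simple members of $\mathbb{V}$ over a Boolean space $X$, and for each $x \in X$ consider the stalk $\mathbf D_x$, which is simple. The composed maps $\mathbf B \to \mathbf D \to \mathbf D_x$ and $\mathbf C \to \mathbf D \to \mathbf D_x$ are homomorphisms out of simple algebras, hence are either constant or embeddings. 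The key point is that the set of $x \in X$ for which the stalk map from $\mathbf B$ (respectively $\mathbf C$) collapses two prescribed distinct elements is clopen and proper, so by intersecting finitely many such complements we find an $x$ for which both maps are embeddings; this stalk is the desired simple amalgam.

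For the ($\Leftarrow$) direction, which is the main content, suppose that the class $\mathbb{S}$ of simple non-singleton members of $\mathbb{V}$ has AP, and let $\langle \mathbf A, \mathbf B, \mathbf C\rangle$ be an arbitrary diagram in $\mathbb{V}$. Using Theorem \ref{Boolean Product}, I would represent $\mathbf A$ as a Boolean product of simple stalks over a Boolean space $X$, and then extend the embeddings $\mathbf A \hookrightarrow \mathbf B$ and $\mathbf A \hookrightarrow \mathbf C$ so as to also write $\mathbf B$ and $\mathbf C$ as Boolean products over the same base space $X$ (refining if necessary), with compatible stalk embeddings $\mathbf A_x \hookrightarrow \mathbf B_x$ and $\mathbf A_x \hookrightarrow \mathbf C_x$ at every point $x$. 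At each $x \in X$ the stalk diagram lives in $\mathbb{S}$ (or collapses trivially) and by hypothesis admits an amalgam $\mathbf D_x$ in $\mathbb{S}$; I would then assemble these local amalgams into a global Boolean product $\mathbf D$ over $X$, which automatically lies in $\mathbb{V}$, and verify that the gluings of the stalk embeddings $\mathbf B_x \hookrightarrow \mathbf D_x$ and $\mathbf C_x \hookrightarrow \mathbf D_x$ combine into genuine embeddings $\mathbf B \hookrightarrow \mathbf D$ and $\mathbf C \hookrightarrow \mathbf D$ whose compositions with $\mathbf A \hookrightarrow \mathbf B$ and $\mathbf A \hookrightarrow \mathbf C$ agree.

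The main obstacle is this last step of coherent local-to-global gluing: one must rearrange $\mathbf B$ and $\mathbf C$ as sections over the same base Boolean space as $\mathbf A$ so that the stalkwise amalgamations can be combined continuously, and one must check that the resulting sections form a Boolean product (in particular that the patchwork property holds). Once this sheaf-theoretic bookkeeping is handled, the remainder is routine: the term operations are computed stalkwise, so $\mathbf D$ satisfies every identity of $\mathbb{V}$, and injectivity of the global maps reduces to stalkwise injectivity, which was guaranteed by the choice of local amalgams in $\mathbb{S}$.
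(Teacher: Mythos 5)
First, a point of comparison: the paper does not prove this statement at all --- it is quoted from Werner's monograph \cite{We78} and used as a black box --- so there is no in-paper argument to measure your attempt against. Judged on its own terms, your outline has the right general shape (pass to stalks of Boolean product representations, amalgamate stalkwise, reassemble), but the two places where you defer the work are exactly where the theorem lives, and two of your intermediate assertions fail as stated.

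In the ($\Rightarrow$) direction, ``intersecting finitely many such complements'' does not work: two nonempty clopen subsets of a Boolean space can be disjoint, so the set of $x$ where $\mathbf B \to \mathbf D_x$ is non-collapsing and the set where $\mathbf C \to \mathbf D_x$ is non-collapsing need not meet. The repair is to choose the witnessing pair inside the common image of $\mathbf A$: since $\mathbf A$ is itself simple and non-singleton, the single nonempty clopen set $\{x : f_1(f(a_1))(x) \neq f_1(f(a_2))(x)\}$ for one pair $a_1 \neq a_2$ in $A$ witnesses, via simplicity of $\mathbf B$ and $\mathbf C$, that both stalk maps are embeddings at once. The more serious gap is in ($\Leftarrow$): an embedding of Boolean products $\mathbf A \hookrightarrow \mathbf B$ does not allow you to rewrite $\mathbf B$ over the same base space as $\mathbf A$; it induces a continuous map of base spaces in the \emph{opposite} direction, $X_{\mathbf B} \to X_{\mathbf A}$ (restrict a maximal congruence of $\mathbf B$ to $\mathbf A$). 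The construction that actually works amalgamates over the fibre product $X_{\mathbf B} \times_{X_{\mathbf A}} X_{\mathbf C}$, and one must then prove that the two projections from this fibre product are surjective --- this is where the congruence extension property of discriminator varieties and the fact that subalgebras of simple members are simple or trivial enter --- since surjectivity is what makes the assembled maps $\mathbf B \to \mathbf D$ and $\mathbf C \to \mathbf D$ injective. By contrast, the step you flag as the main obstacle (verifying the patchwork property for $\mathbf D$) can be avoided entirely: it suffices to take the subalgebra of the full product of the stalkwise amalgams generated by the images of $\mathbf B$ and $\mathbf C$, which lies in the variety automatically. As written, then, the proposal is a plausible plan with the decisive steps missing or misstated rather than a proof; note also that for the specific variety $\mathbb{RUNO}1$ the situation is far easier (the simple members are five finite algebras with no proper subalgebras, so the only diagrams in $\mathbb S$ are trivial ones, as in Lemma \ref{APLemma}), whereas Werner's theorem is a general statement requiring the full argument.
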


 The following corollary is immediate from Theorem \ref{Runo1 discriminator}, Lemma \ref{APLemma} and Theorem \ref{AP}.
 
\begin{Corollary} \label{C1}
Every subvariety of $\mathbb {RUNO}1$ has the amalgamation property. 
 \end{Corollary}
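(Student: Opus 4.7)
The plan is to deduce Corollary \ref{C1} by combining the three results the author has just set up: the fact that $\mathbb{RUNO}1$ is a discriminator variety (Theorem \ref{Runo1 discriminator}), the AP statement for subclasses of $\mathbb{S}$ (Lemma \ref{APLemma}), and Werner's theorem (Theorem \ref{AP}) that reduces AP for a discriminator variety to AP for its class of simple, non-singleton members.

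First I would fix a subvariety $\mathbb{V} \subseteq \mathbb{RUNO}1$ and argue that $\mathbb{V}$ is itself a discriminator variety. The discriminator term $T(x,y,z)$ constructed in the proof of Theorem \ref{Runo1 discriminator} is a term of the language $\mathbf{L}$ and acts as a ternary discriminator on every algebra in the generating class $\{\mathbf{A1},\ldots,\mathbf{A5}\}$. Since Corollary \ref{Runo1} together with Theorem \ref{SubIrr} identifies $\{\mathbf{A1},\ldots,\mathbf{A5}\}$ as the simple, non-singleton members of $\mathbb{RUNO}1$, and since the simple members of $\mathbb{V}$ form a subclass of these, the same term $T$ serves as a discriminator on every simple member of $\mathbb{V}$, so $\mathbb{V}$ is a discriminator variety generated by its simple members.

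Next I would apply Werner's theorem (Theorem \ref{AP}) to $\mathbb{V}$: it has the amalgamation property if and only if its class of simple, non-singleton members has AP. Writing $\mathbb{S}'$ for this class, we have $\mathbb{S}' \subseteq \mathbb{S}$, so Lemma \ref{APLemma} gives that $\mathbb{S}'$ has AP (if $\mathbb{S}'$ is empty then $\mathbb{V}$ is trivial and AP holds vacuously). Hence $\mathbb{V}$ has AP, which is exactly the claim of Corollary \ref{C1}.

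The whole argument is really bookkeeping; the only point that needs a brief justification is the passage from $\mathbb{RUNO}1$ being discriminator to $\mathbb{V}$ being discriminator, and this is handled by observing that $T$ is a fixed term in the language that works uniformly on every simple algebra appearing in either variety. There is no real obstacle, but care should be taken to phrase the identification of simple non-singleton members of $\mathbb{V}$ correctly, since Theorem \ref{SubIrr} is stated for $\mathbb{RUNO}1$ and one is implicitly using that a simple algebra of $\mathbb{V}$ is also a simple algebra of $\mathbb{RUNO}1$.
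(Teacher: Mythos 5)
Your proposal is correct and follows exactly the paper's route: the paper states that the corollary is immediate from Theorem \ref{Runo1 discriminator}, Lemma \ref{APLemma} and Theorem \ref{AP}, which is precisely the combination you use. The extra care you take in justifying that each subvariety is itself a discriminator variety and that its simple non-singleton members lie in $\mathbb{S}$ merely makes explicit what the paper leaves implicit.
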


 \begin{definition} [Epimorphism] Let $\mathbb{K}$ be a class of algebras, and $\mathbf{A}, \mathbf{B} \in \mathbb{K}$.
A homomorphism $h : A \to B$ is called a $\mathbb{K}$-epimorphism, or K-epi, if it satisfies the
following condition: For any $\mathbf{C} \in \mathbb{K}$ and any pair of homomorphisms $k, k': \mathbf{B} \to
\mathbf{C}$, if $k \circ h = k' \circ h$, then $k = k'$.
We say that $\mathbb{K}$ has the property: Epimorphisms are surjective (ES, for short) if every epimorphism on every algebra in $\mathbb{K}$ is surjective (i.e., onto).
\end{definition}

Since all members of $\mathbb{S}$ are primal, the following lemma is immediate.

\begin{Lemma} \label{ESLemma}
Every non-empty subclass of $\mathbb{S}$ has ES.
\end{Lemma}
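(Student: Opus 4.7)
My plan is to reduce the statement to the observation, noted already in the introductory Remark (item (a.3)) and reinforced by Corollary \ref{Primal}, that each $\mathbf{A}i$ ($i=1,\dots,5$) has no proper subalgebras. The idea is to bypass the abstract ES-condition altogether and prove the stronger statement that every homomorphism between two members of $\mathbb{S}$ is already surjective; then ES for any nonempty $\mathbb{S}' \subseteq \mathbb{S}$ is immediate, because a $\mathbb{S}'$-epimorphism is in particular such a homomorphism.

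To execute this, let $\mathbb{S}' \subseteq \mathbb{S}$ be nonempty, let $\mathbf{A}, \mathbf{B} \in \mathbb{S}'$, and let $h : \mathbf{A} \to \mathbf{B}$ be any homomorphism (in the type $\mathbf L$). Since $0$ and $1$ are constants in the signature, $h[\mathbf A]$ contains $0_{\mathbf B}$ and $1_{\mathbf B}$, so the homomorphic image $h[\mathbf A]$ is a nonempty subalgebra of $\mathbf B$. By Corollary \ref{Primal}, $\mathbf B$ is primal, meaning every finitary operation on $B$ is a term function of $\mathbf B$. In particular, for each $b \in B$ the constant unary operation $x \mapsto b$ is a term function, so any nonempty subalgebra of $\mathbf B$ must contain every element of $B$. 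Hence $h[\mathbf A] = B$, which shows that $h$ is surjective.

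The only step that requires any thought is the passage from primality to the absence of proper subalgebras, but this is a standard folklore fact about primal algebras of cardinality at least two, and is in fact already recorded for the specific algebras $\mathbf{A}1,\dots,\mathbf{A}5$ in the introductory Remark. I do not expect any genuine obstacle: once primality is in hand, every hom-image is forced to be the whole codomain, so each subclass of $\mathbb{S}$ trivially satisfies ES. It is worth emphasizing that, in parallel with the preceding Lemma~\ref{APLemma} (where the same no-proper-subalgebras property collapses all amalgamation diagrams to trivial ones), the present lemma is a direct structural consequence rather than requiring any delicate epimorphism-tracking argument.
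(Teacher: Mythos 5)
Your argument is correct and follows the same route as the paper, which simply observes that the lemma is immediate because all members of $\mathbb{S}$ are primal; you have merely spelled out the intermediate step (primal $\Rightarrow$ no proper subalgebras $\Rightarrow$ every homomorphic image is the whole codomain). Nothing further is needed.
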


\begin{theorem} Comer \cite{Co69} \label{ES}
Let V be a discriminator variety such that the
class of its simple members has AP and ES. Then V satisfies ES.
\end{theorem}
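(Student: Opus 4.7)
The plan is to reduce ES for $V$ to the hypotheses on simple members by exploiting the Boolean product representation intrinsic to every discriminator variety. Concretely, I would work with the equivalent reformulation of ES: show that for any homomorphism $h:\mathbf{A}\to\mathbf{B}$ in $V$ whose image is a proper subalgebra of $\mathbf{B}$, there exist an algebra $\mathbf{C}\in V$ and two distinct homomorphisms $k,k':\mathbf{B}\to\mathbf{C}$ such that $k\circ h=k'\circ h$. After factoring $h$ through its image one may assume that $h$ is simply the inclusion of a proper subalgebra $\mathbf{A}\subsetneq\mathbf{B}$.

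The next step is to form the amalgamated coproduct $\mathbf{D}=\mathbf{B}\sqcup_{\mathbf{A}}\mathbf{B}$ inside $V$. This object is available because $V$ enjoys AP: by Werner's theorem cited immediately above, AP for the class of simples lifts to AP for the whole discriminator variety. The two canonical morphisms $i_1,i_2:\mathbf{B}\to\mathbf{D}$ coincide on $\mathbf{A}$ by construction, so the triple $(\mathbf{D},i_1,i_2)$ witnesses that $h$ fails to be an epimorphism in $V$ as soon as one can verify $i_1\neq i_2$.

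The main obstacle is therefore precisely this non-equality, and it is where ES on simples must be deployed. I would invoke the Bulman-Fleming--Keimel--Werner representation (Theorem \ref{Boolean Product}) to view $\mathbf{B}$ as a Boolean product of simple algebras $(\mathbf{B}_x)_{x\in X}$ over a Boolean space, and similarly represent $\mathbf{D}$ stalkwise. Since $\mathbf{A}$ is proper in $\mathbf{B}$, continuity of the Boolean product forces some stalk $x\in X$ at which the projected subalgebra $\mathbf{A}_x\subseteq\mathbf{B}_x$ is also proper; at that stalk, ES for simples furnishes a simple $\mathbf{C}_x\in V$ together with distinct homomorphisms $\mathbf{B}_x\to\mathbf{C}_x$ that agree on $\mathbf{A}_x$. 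The remaining and decisive technical task is the sheaf-theoretic gluing: one must lift this pointwise separating data at the critical $x$ to a genuine pair of distinct global homomorphisms $k,k':\mathbf{B}\to\mathbf{C}$ in $V$ agreeing on $\mathbf{A}$, by combining the stalks where $\mathbf{A}_x=\mathbf{B}_x$ (at which no separation is needed and one uses the identity amalgam) with the critical stalks via continuity over the Boolean base. This coherent assembly, which is where AP and ES on simples are used together, is the heart of the argument; once accomplished, $k$ and $k'$ are distinct (they differ at the critical stalk) and agree on $\mathbf{A}$, contradicting the assumption that $h$ is an epimorphism in $V$ and completing the proof.
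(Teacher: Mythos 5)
The paper does not actually prove this theorem---it is quoted from Comer---so your sketch stands or falls on its own. The framing is fine: ES is equivalent to showing that the inclusion of a proper subalgebra $\mathbf{A}\subsetneq\mathbf{B}$ is never an epimorphism, and the pushout $\mathbf{B}\sqcup_{\mathbf{A}}\mathbf{B}$ with its two canonical maps is the standard witness (note, though, that this pushout exists in any variety; AP is not what provides it). The problem is the step you rely on to produce the ``critical stalk'': the claim that properness of $\mathbf{A}$ in $\mathbf{B}$ forces some stalk of the Boolean product representation at which the projected subalgebra $\mathbf{A}_x\subseteq\mathbf{B}_x$ is proper is false. Take $V$ to be the variety of Boolean algebras---the prototypical discriminator variety, whose unique nontrivial simple member $\mathbf{2}$ certainly has AP and ES---let $\mathbf{B}$ be any infinite Boolean algebra and $\mathbf{A}$ a proper subalgebra. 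In the Boolean product representation every stalk of $\mathbf{B}$ is $\mathbf{2}$, and since $0,1\in\mathbf{A}$ the projection of $\mathbf{A}$ onto every stalk is all of $\mathbf{2}$. There is no critical stalk, and your construction of $k\neq k'$ has nothing to act on. In a discriminator variety a subalgebra can be proper ``horizontally,'' entirely at the level of the Boolean base space (dually: the Stone space of $\mathbf{B}$ maps onto that of $\mathbf{A}$ non-injectively), and the separating pair of homomorphisms must then be manufactured by manipulating the base rather than the stalks. Your sketch never addresses this case.

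Beyond that, the step you yourself flag as ``the decisive technical task''---gluing the pointwise separating data into two distinct global homomorphisms agreeing on $\mathbf{A}$---is exactly the substance of Comer's theorem and is left entirely undone; and even at a genuinely proper stalk, ES for the class of simple algebras does not immediately apply, because the stalkwise image $\mathbf{A}_x$ need not itself be simple, whereas the hypothesis, as defined in this paper, only concerns epimorphisms whose domain and codomain both lie in the class of simples. So the proposal assembles the right ingredients (the Boolean product representation, AP and ES for simples) but omits both the case analysis and the assembly; as written it has a genuine gap.
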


\begin{Corollary} \label{C2}.  
Every subvariety of $\mathbb {RUNO}1$ satisfies ES.
 \end{Corollary}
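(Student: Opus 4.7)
The plan is to apply Comer's theorem (Theorem \ref{ES}) to an arbitrary subvariety $\mathbb{V}$ of $\mathbb{RUNO}1$. To invoke it, I need to verify three things about $\mathbb{V}$: that it is a discriminator variety, that its simple, non-singleton members have AP, and that those same members have ES.

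First I would fix $\mathbb{V} \subseteq \mathbb{RUNO}1$ and observe that the simple, non-singleton members of $\mathbb{V}$ form a subclass of $\mathbb{S} = \{\mathbf{A}1, \mathbf{A}2, \mathbf{A}3, \mathbf{A}4, \mathbf{A}5\}$. Indeed, any such algebra is simple (hence subdirectly irreducible) in $\mathbb{V}$, and therefore in $\mathbb{RUNO}1$, so Theorem \ref{SubIrr} places it in $\mathbb{S}$. Call this subclass $\mathbb{S}_{\mathbb{V}}$.

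Next I would argue that $\mathbb{V}$ itself is a discriminator variety. The discriminator term $T(x,y,z)$ constructed in the proof of Theorem \ref{Runo1 discriminator} functions as a ternary discriminator on each $\mathbf{A}i$, and hence on every member of $\mathbb{S}_{\mathbb{V}}$. Since $\mathbb{V}$ is a subvariety of $\mathbb{RUNO}1$ and the subdirectly irreducibles of $\mathbb{V}$ sit inside $\mathbb{S}_{\mathbb{V}}$ (together possibly with the trivial algebra), $\mathbb{V}$ is generated by $\mathbb{S}_{\mathbb{V}}$ (by Birkhoff's subdirect representation theorem), which witnesses that $\mathbb{V}$ is a discriminator variety.

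Finally, I would invoke Lemma \ref{APLemma} and Lemma \ref{ESLemma} directly: every non-empty subclass of $\mathbb{S}$ has AP and ES, so $\mathbb{S}_{\mathbb{V}}$ has both. Comer's Theorem \ref{ES} then delivers that $\mathbb{V}$ satisfies ES, finishing the proof. There is no real obstacle here; the statement is essentially the ES-analogue of Corollary \ref{C1}, and the only routine check is that the discriminator term of the whole variety still witnesses the discriminator property for each subvariety, which is automatic because the generating simples of $\mathbb{V}$ lie among those of $\mathbb{RUNO}1$.
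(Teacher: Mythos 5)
Your proof is correct and follows essentially the same route as the paper: it combines the discriminator property of the subvariety with Lemma \ref{APLemma}, Lemma \ref{ESLemma}, and Comer's Theorem \ref{ES}. You are in fact slightly more careful than the paper in justifying that each subvariety is itself a discriminator variety generated by a subclass of $\mathbb{S}$, a point the paper merely asserts with ``Recall that.''
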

 
 \begin{proof}
 Let $\mathbb{V}$ be a subvariety of $\mathbb{RUNO}1$. 
 Recall that the variety $\mathbb{V}$ 
 is a discriminator variety.  Also, $\mathbb{S}$ has AP by Lemma \ref{APLemma} and has
ES by Lemma \ref{ESLemma}.  Hence, we conclude, in view of Theorem \ref{ES} that $\mathbb{V}$ has ES. 
\end{proof}

\begin{theorem} Isbell \cite{Is66} \label{SAP} Let V be a variety.  
If V has ES and AP, then V has SAP.
\end{theorem}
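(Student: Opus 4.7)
The plan is to prove this classical theorem of Isbell by reducing SAP, via the epimorphism-surjectivity property, to showing that a certain canonical inclusion is a $V$-epimorphism. Given a diagram $\langle \mathbf A, f, \mathbf B, g, \mathbf C\rangle$ in $V$, I would first apply AP to obtain an amalgam $\langle f_1, g_1, \mathbf D\rangle$ with $f_1: \mathbf B \to \mathbf D$ and $g_1: \mathbf C \to \mathbf D$ embeddings satisfying $f_1 \circ f = g_1 \circ g$. After identifying each algebra with its image, $\mathbf A$, $\mathbf B$, $\mathbf C$ sit inside $\mathbf D$ with $A \subseteq B \cap C$ and all four structural maps acting as inclusions. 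Set $M := B \cap C$ inside $D$; since $A \subseteq M$, the goal becomes to show $M = A$, for then the already-constructed $\mathbf D$ is itself a strong amalgam for the original diagram.

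The key claim is that the inclusion $\iota: \mathbf A \hookrightarrow \mathbf M$ is a $V$-epimorphism. Granted the claim, ES forces $\iota$ to be surjective, so $A = M$, yielding SAP. Thus the argument reduces to the following: whenever $p, q: \mathbf M \to \mathbf N$ are $V$-homomorphisms with $p|_A = q|_A$, we must derive $p = q$.

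To establish the epimorphism claim, I would exploit that every $m \in M$ belongs to both $\mathbf B$ and $\mathbf C$. Form two pushouts in $V$: an algebra $\mathbf W_p$ that glues $\mathbf B$ to $\mathbf N$ along $\mathbf M$ via $p$ (with structural maps $\beta_p: \mathbf B \to \mathbf W_p$ and $\nu_p: \mathbf N \to \mathbf W_p$ satisfying $\beta_p(m) = \nu_p(p(m))$), and analogously $\mathbf W_q$ that glues $\mathbf C$ to $\mathbf N$ along $\mathbf M$ via $q$. Next, amalgamate $\mathbf W_p$ and $\mathbf W_q$ over a common copy of $\mathbf N$ (identifying $\nu_p$ with $\nu_q$), and further amalgamate over $\mathbf D$ so that the identification $B \cap C = M$ from $\mathbf D$ persists in the final algebra $\mathbf F$. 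Inside $\mathbf F$, each $m \in M$ has two routes: via $\mathbf B$ it is identified with $\nu_p(p(m))$, and via $\mathbf C$ with $\nu_q(q(m))$; these two elements must therefore coincide in $\mathbf F$. Since $\mathbf N$ embeds into $\mathbf F$ with $\nu_p$ and $\nu_q$ identified, this forces $p(m) = q(m)$, as required.

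The main obstacle is that $p$ and $q$ need not be injective, while AP is stated only for diagrams of embeddings, so the pushout maps $\beta_p, \nu_p, \gamma_q, \nu_q$ may fail to be embeddings and the subsequent amalgamation over $\mathbf N$ may collapse information. To repair this, I would factor each of $p, q$ through its image: write $p = \bar p \circ \pi_p$ with $\pi_p: \mathbf M \twoheadrightarrow \mathbf M/\ker p$ a surjection and $\bar p: \mathbf M/\ker p \hookrightarrow \mathbf N$ an embedding, and symmetrically for $q$. Every subsequent invocation of AP can then be applied to a span of embeddings, and by threading the embeddings carefully through each intermediate pushout I can ensure that $\mathbf N$ really does inject into $\mathbf F$. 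Managing this bookkeeping, so that the identification $\nu_p(p(m)) = \nu_q(q(m))$ inside $\mathbf F$ transports back to an honest equality in $\mathbf N$, is the delicate step on which the argument hinges.
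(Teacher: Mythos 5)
The paper offers no proof of this statement at all: it is imported as a known classical result with a citation to Isbell, so there is no in-paper argument to compare yours against. Your overall strategy --- amalgamate to get $\mathbf D$, set $M := B \cap C$ inside $D$, argue that $A \hookrightarrow M$ is a $V$-epimorphism, and let ES collapse $M$ to $A$ --- is the right general shape and is essentially how the classical argument is organized. The problem is that the decisive step, the one you yourself flag as ``the delicate step on which the argument hinges,'' contains a genuine gap rather than mere bookkeeping.

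Concretely: the algebras $\mathbf W_p$ and $\mathbf W_q$ are pushouts along the homomorphisms $p, q : \mathbf M \to \mathbf N$, which need not be injective, and AP says nothing about such pushouts --- it only guarantees that a span of \emph{embeddings} can be completed with both legs injective. Factoring $p$ as $\bar p \circ \pi_p$ does not repair this: the pushout of $\mathbf B \hookleftarrow \mathbf M \twoheadrightarrow \mathbf M/\ker p$ is $\mathbf B/\theta$ where $\theta$ is the congruence on $\mathbf B$ generated by $\ker p$, and the induced map $\mathbf M/\ker p \to \mathbf B/\theta$ is injective only if $\theta$ restricts to $\ker p$ on $M$ --- that is the Congruence Extension Property, which is independent of AP and ES and not available here. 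On top of this, your final algebra $\mathbf F$ must simultaneously amalgamate $\mathbf W_p$, $\mathbf W_q$ and $\mathbf D$ over \emph{two different} overlaps ($\mathbf N$ and the copies of $\mathbf B$, $\mathbf C$) while keeping $\mathbf N$ embedded; ordinary AP provides amalgamation over a single common subalgebra and does not yield such multiple amalgams. The classical proof avoids both obstacles: one takes the pushout $P = B *_A C$ (whose coprojections are injective by AP), shows that $u(b) = v(c)$ forces the two copies of $b$ to coincide in $B *_A C *_A B$, and uses the AP-guaranteed injectivity of $B *_A B \to B *_A C *_A B$ (a pushout over a span of embeddings) to conclude that $b$ lies in the dominion of $A$ in $B$; Isbell's equivalence between ES and the triviality of dominions then gives $b \in A$, so the pushout itself is a strong amalgam. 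Every pushout invoked there is over a span of monomorphisms, which is exactly what your construction fails to arrange.
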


\begin{Corollary}
Every subvariety of $\mathbb {RUNO}1$ satisfies SAP.  
 \end{Corollary}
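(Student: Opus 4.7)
The statement follows almost immediately from the machinery already assembled, so my plan is to package three prior results rather than reprove anything. Specifically, I would invoke Corollary \ref{C1} (every subvariety of $\mathbb{RUNO}1$ has AP) together with Corollary \ref{C2} (every subvariety of $\mathbb{RUNO}1$ has ES), and then apply Isbell's Theorem \ref{SAP}, which says that AP + ES implies SAP in any variety.

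Concretely, the plan is: fix an arbitrary subvariety $\mathbb{V} \subseteq \mathbb{RUNO}1$; record that $\mathbb{V}$ has AP by Corollary \ref{C1}; record that $\mathbb{V}$ has ES by Corollary \ref{C2}; then conclude by Theorem \ref{SAP} that $\mathbb{V}$ has SAP. Since $\mathbb{V}$ was arbitrary, every subvariety of $\mathbb{RUNO}1$ satisfies SAP.

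There is essentially no obstacle, because the hard content was already done earlier: reducing AP and ES to the behaviour of the simple non-singleton members $\mathbb{S} = \{\mathbf{A}1,\dots,\mathbf{A}5\}$ via the discriminator variety theorems of Werner and Comer (Theorems \ref{AP} and \ref{ES}), and then observing that the members of $\mathbb{S}$ are subalgebra-free and primal, which trivially gives AP and ES for $\mathbb{S}$. The only thing worth double-checking is that Isbell's theorem applies without extra hypotheses (it does — it holds for arbitrary varieties), so the citation of Theorem \ref{SAP} is clean. Thus the proof will be a two-line deduction consisting of a reference to Corollaries \ref{C1} and \ref{C2} followed by an application of Theorem \ref{SAP}.
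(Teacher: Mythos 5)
Your proposal is correct and matches the paper's proof exactly: the paper likewise fixes a subvariety $\mathbb{V}$ of $\mathbb{RUNO}1$, cites Corollary \ref{C1} for AP and Corollary \ref{C2} for ES, and concludes SAP via Isbell's Theorem \ref{SAP}. Nothing further is needed.
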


 \begin{proof}
Let $\mathbb{V}$ be a subvariety of $\mathbb{RUNO}1$.  Then $\mathbb{V}$ has AP by Corollary \ref{C1} and has ES by Corollary \ref{C2}. Hence by Theorem \ref{SAP} we conclude that $\mathbb{V}$ has SAP.
\end{proof}

\section{{\bf The lattice of subvarieties of $\mathbb{RUNO}1$}}

In this section we present a concrete description of the structure of the lattice of subvarieties of  $\mathbb{RUNO}1$ and also give an (equational) base for each of the thirty nontrivial, proper subvarieties of $\mathbb{RUNO}1$.

\begin{Corollary} \label{CorLat}
The lattice $\mathbf{L}$ of subvarieties of the variety $\mathbb{RUNO}1$ is 
a Boolean lattice of size $2^5$ \rm{(}=32\rm{)} elements. 
\end{Corollary}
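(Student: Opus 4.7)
The plan is to exhibit an explicit bijection between subvarieties of $\mathbb{RUNO}1$ and subsets of $\mathbb{S}:=\{\mathbf{A}1,\mathbf{A}2,\mathbf{A}3,\mathbf{A}4,\mathbf{A}5\}$, given by $T \mapsto \mathbb{V}(T)$, and to verify that this bijection is a lattice isomorphism onto the Boolean lattice $2^{\mathbb{S}}$. The main input is Theorem \ref{SubIrr}, which identifies the subdirectly irreducible algebras of $\mathbb{RUNO}1$ (indeed, all \emph{simple} algebras) with the five members of $\mathbb{S}$.

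Step 1 (surjectivity onto subvarieties). Let $\mathbb{V}$ be any subvariety of $\mathbb{RUNO}1$. By Birkhoff's theorem, $\mathbb{V}$ is generated by $\mathbb{V}_{SI}$, the class of its subdirectly irreducible members. Since $\mathbb{V}_{SI}\subseteq (\mathbb{RUNO}1)_{SI}=\mathbb{S}\cup\{\mathbf{1}\}$ by Theorem \ref{SubIrr}, we have $\mathbb{V}=\mathbb{V}(T)$ for $T:=\mathbb{V}_{SI}\cap\mathbb{S}\subseteq\mathbb{S}$.

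Step 2 (injectivity). Suppose $T_1,T_2\subseteq\mathbb{S}$ with $\mathbb{V}(T_1)=\mathbb{V}(T_2)$. Since $\mathbb{RUNO}1$ is a discriminator variety (Theorem \ref{Runo1 discriminator}), it is congruence-distributive, so J\'onsson's lemma applies: every subdirectly irreducible member of $\mathbb{V}(T_i)$ lies in $HSP_U(T_i)$. Because each $\mathbf{A}j$ is finite, ultraproducts of families from $T_i$ return algebras isomorphic to members of $T_i$; because each $\mathbf{A}j$ has no proper subalgebras (Remark, part (a)(3)) and is simple (Theorem \ref{SubIrr}), $HS$ contributes only the $\mathbf{A}j$ themselves (up to isomorphism) together with the trivial algebra. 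Hence $(\mathbb{V}(T_i))_{SI}\cap\mathbb{S}=T_i$, and the assumption forces $T_1=T_2$. Combined with Step 1, the map $T\mapsto \mathbb{V}(T)$ is a bijection, so $|\mathbf{L}|=2^5=32$.

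Step 3 (lattice isomorphism). It remains to check that the bijection preserves joins and meets. For joins, $\mathbb{V}(T_1)\vee\mathbb{V}(T_2)=\mathbb{V}(T_1\cup T_2)$ is immediate from the definition of the variety generated by a class. For meets, apply Step 2 to the subvariety $\mathbb{V}(T_1)\cap\mathbb{V}(T_2)$: its subdirectly irreducible members are precisely the SI members common to both, namely $T_1\cap T_2$, so $\mathbb{V}(T_1)\cap\mathbb{V}(T_2)=\mathbb{V}(T_1\cap T_2)$. Thus $\mathbf{L}\cong 2^{\mathbb{S}}$, a Boolean lattice with $32$ elements.

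The main obstacle is Step 2: pinning down exactly which SI algebras appear in $\mathbb{V}(T_i)$. I expect the five absorption properties of $\mathbb{S}$ recorded in the Remark (finiteness, no proper subalgebras, simplicity, no nontrivial automorphisms, pairwise non-isomorphism owing to distinct cardinalities or differing $\to$-tables) to make the J\'onsson argument clean, since each application of $H$, $S$, $P_U$ to a subset of $\mathbb{S}$ produces nothing new modulo isomorphism. Once that closure computation is in hand, the remaining steps are formal.
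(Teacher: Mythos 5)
Your proof is correct and follows essentially the same route as the paper: both identify the subvarieties of $\mathbb{RUNO}1$ with the subsets of $\{\mathbf{A}1,\dots,\mathbf{A}5\}$ by combining the characterization of the subdirectly irreducibles (Theorem \ref{SubIrr} / Corollary \ref{Runo1}) with J\'onsson's theorem. The paper's version is a brief sketch that cites primality of the generators where you instead use simplicity, finiteness and the absence of proper subalgebras directly; your Steps 1--3 merely spell out the $HSP_U$ closure computation and the lattice-isomorphism check that the paper leaves implicit.
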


\begin{Proof} Since, by Corollary \ref{Runo1},  $\mathbb{RUNO}1$ is generated by $\{\mathbf{Ai}: i=1, \cdots, 5\}$, and all the generators are primal by Corollary \ref{Primal}, it follows that the five generators are atoms of the distributive lattice $\mathbf{L}$.  Then, using J\'{o}nsson's theorem \cite{Jo67}, we can conclude that 
\begin{itemize}
\item of height 0: The trivial variety;
\item of height 1:  $\mathbb{V}(\mathbf{A}i)$, $i=1,2, \cdots,5$ as atoms;
\item of height 2: subvarieties generated by pairs; 
\item of height 3: subvarieties generated by (unordered) triples; 
\item of height 4: subvarieties generated by (unordered) quadruples of these algebras; 
\item of height 5: the variety $\mathbb{RUNO}1$. 
 \end{itemize}
 It is clear that the lattice $\mathbf{L}$ is complemented and hence is Boolean.  Thus we  conclude that $\mathbf{L}$ is isomorphic to the power set of $\{1,2,3,4,5\}$; hence $\mathbf{L}$ is
a Boolean lattice of size $2^5$ (= 32) elements. 
\end{Proof}

\subsection{{\bf Bases for subvarieties of $\mathbb{RUNO}1$}}

\

\medskip
In this section, we will give bases for all the subvarieties of $\mathbb{RUNO}1$.  
Here ``a base for a class $\mathbb{K}$ of algebras'' means ``a base for the variety 
$\mathbb{V(K)}$ of algebras.''  In particular, ``a base for a single algebra $\mathbf A$'' means 
a base for $\mathbb{V}(\mathbf{A})$.  Also, a ``base`` in the following theorem means ``a base relative to $\mathbb{RUNO}1$.  Note also that $a := 0 \to 1$, $b := 0 \to a.$

\begin{Theorem}
Bases for all non-trivial and proper subvarieties of $\mathbb{RUNO}1$ are as follows:
\smallskip
\begin{thlist}
\item[1] {\bf BASES FOR SINGLE ALGEBRAS :}
          
\item[i] \quad {\bf A BASE FOR $\{\mathbf A1\}$}  
     \begin{thlist}
\item[a]  $(0 \to 1) \to 1 \approx 1$; 
 \item[b]  $0 \to (0 \to 1) \approx 1.$\\ 
      \end{thlist}
\item[ii] \quad  {\bf A BASE FOR $\{\mathbf A2\}$: } 
      \begin{thlist}
\item[a]  $0 \to (0 \to 1) \approx 1$;
\item[b] $(0 \to 1) \to 1 \approx 0 \to1$;
\item[c] $(0 \to 1)^*   \approx 0$.\\
         \end{thlist}
\item[iii] \quad {\bf A BASE FOR $\{\mathbf A3\}$ : } 
      \begin{thlist}
\item[a] $x \to (x \to (x \to y))  \approx  x \to (x \to y)$.\\
        \end{thlist}
\item[iv] \quad {\bf A BASE FOR $\{\mathbf A4\}$ :} 
        \begin{thlist}
\item[a]  $((0 \to 1) \to 1)' \approx 0 \to (0 \to 1)$.\\
        \end{thlist}
\item[v] \quad {\bf A BASE FOR $\{\mathbf A5\}$ : } 
       \begin{thlist}
\item[a]  $(0 \to 1)^* \to 1 \approx 1$.\\
       \end{thlist}

\item[2] {\bf BASES FOR PAIRS OF ALGEBRAS :}

\smallskip
    
\item[i] \quad {\bf A BASE FOR $\{\mathbf A1, \mathbf A2\}$ : } 
\begin{thlist}
\item[a] $(0 \to 1)^* \approx 0$;
\item[b] $(0 \to (0 \to 1) \approx 1.$\\
 \end{thlist}
 
\item[ii] \quad {\bf A BASE FOR $\{\mathbf A1, \mathbf A3\}$ : } 
\begin{thlist}
\item[a] $(0 \to 1)^* \approx 0$;
\item[b] $ (0 \to 1) \to 1 \approx 1.$\\
\end{thlist}

\item[iii] \quad {\bf A BASE FOR $\{\mathbf A1, \mathbf A4\}$ : } 
\begin{thlist}
\item[a] $(0 \to 1)^* \approx 0$;

\item[b] $(0 \to (0 \to 1)) \approx ((0 \to 1) \to 1)$.\\
  \end{thlist}

\item[iv] \quad {\bf A BASE FOR $\{\mathbf A1, \mathbf A5\}$ : } 
     \begin{thlist}
\item[a] $(0 \to (0 \to 1)) \to (0 \to 1)^{**}  \approx  (0 \to 1)^{**};$

\item[b] $(0 \to (0 \to 1)) \geq ((0 \to 1) \to 1)$;
 
\item[c] $(0 \to 1)^* \to ((0 \to 1) \to 1) \approx (0 \to 1).$\\
    \end{thlist}

\item[v] \quad {\bf A BASE FOR $\{\mathbf A2, \mathbf A3\}$: } 
  \begin{thlist}
\item[a] $(0 \to 1)^* \approx 0$;

 \item[b] $(0 \to (0 \to 1)) \to (0 \to 1)^{**}  \approx  (0 \to 1)^{**};$
 
\item[c] $(a \to b) \to a \approx b.$\\  
\end{thlist}

\item[vi] \quad {\bf A BASE FOR $\{\mathbf A2, \mathbf A4\}$ :  } 
 \begin{thlist}
\item[a] $(0 \to 1)^* \approx 0$;

\item[b] $(0 \to (0 \to 1)) \geq ((0 \to 1) \to 1)$;
 
\item[c]  $(a \to b) \to a \approx b.$\\ 
 \end{thlist}

\item[vii] \quad {\bf A BASE FOR $\{\mathbf A2, \mathbf A5\}$ : } 
    \begin{thlist}
  \item[a] $(0 \to 1) \to (0 \to (0 \to 1)) \approx  0 \to 1;$
  
   \item[b] $(0 \to (0 \to 1)) \geq ((0 \to 1) \to 1)$;
   
  \item[c] $(a \to b) \to a \approx b.$ \\ 
    \end{thlist}

\item[viii] \quad {\bf A BASE FOR $\{\mathbf A3, \mathbf A4\}$: } 
 \begin{thlist}
\item[a] $(0 \to 1)^* \approx 0$;

\item[b] $(0 \to (0 \to 1) \approx 0 \to 1;$
 
\item[c] $(a \to b) \to a \approx b.$\\ 
 \end{thlist}

\item[ix] \quad {\bf A BASE FOR $\{\mathbf A3, \mathbf A5\}$ } 
 \begin{thlist}
 \item[a] $(0 \to (0 \to 1)) \to (0 \to 1)^{**}  \approx  (0 \to 1)^{**};$
 
 \item[b] $(0 \to 1)^* \to ((0 \to 1) \to 1) \approx (0 \to 1);$
 
\item[c] $(a \to b) \to a \approx b.$\\  
 \end{thlist}

\item[x] \quad {\bf A BASE FOR $\{\mathbf A4, \mathbf A5\}$ } 
 \begin{thlist}
\item[a] $(0 \to (0 \to 1)) \geq ((0 \to 1) \to 1)$;
  
\item[b] $(0 \to 1)^* \to ((0 \to 1) \to 1) \approx (0 \to 1);$
   
 \item[c] $(a \to b) \to a \approx b.$\\  
 \end{thlist} 

\item[3] {\bf BASES FOR TRIPLES OF ALGEBRAS :}

\item[i] \quad {\bf A BASE FOR $\{\mathbf A1, \mathbf A2, \mathbf A3\}$ : } 
\begin{thlist}
\item[a] $(0 \to 1)^* \approx 0$;

\item[b] $(0 \to (0 \to 1)) \to (0 \to 1)^{**}  \approx  (0 \to 1)^{**}.$\\
\end{thlist}

\item[ii] \quad {\bf A BASE FOR $\{\mathbf A1, \mathbf A2, \mathbf A4\}$ : } 
\begin{thlist}

\item[a]  $(0 \to 1)^* \approx 0$;

\item[b] $(0 \to (0 \to 1)) \geq ((0 \to 1) \to 1)$.\\
\end{thlist}

\item[iii] \quad {\bf A BASE FOR $\{\mathbf A1, \mathbf A2, \mathbf A5\}$ : } 
\begin{thlist}
\item[a] $(0 \to (0 \to 1)) \to (0 \to1)^{**} \approx  (0 \to 1)^{**}$;

\item[b] $(0 \to (0 \to 1)) \geq ((0 \to 1) \to 1)$.\\ 
\end{thlist}

\item[iv] \quad {\bf A BASE FOR $\{\mathbf A1, \mathbf A3, \mathbf A4 \}$: } 
\begin{thlist}
\item[a] $(0 \to 1)^* \approx 0$;

\item[b] $0  \to (0 \to (0 \to 1) \approx 0 \to 1;$

 \item[c] $(0 \to 1)^* \to ((0 \to 1) \to 1) \approx (0 \to 1).$\\
\end{thlist}

\item[v] \quad {\bf A BASE FOR $\{\mathbf A1, \mathbf A3, \mathbf A5\}$ : } 
\begin{thlist}
\item[a] $(0 \to 1)^* \to ((0 \to 1) \to 1) \approx (0 \to 1)$;
 
\item[b] $(0 \to (0 \to 1)) \to (0 \to 1)^{**}  \approx  (0 \to 1)^{**}.$\\
\end{thlist}

\item[vi] \quad {\bf A BASE FOR $\{\mathbf A1, \mathbf A4, \mathbf A5 \}$: } 
\begin{thlist} 
\item[a] $(0 \to (0 \to 1)) \geq ((0 \to 1) \to 1)$;
 
\item[b] $(0 \to 1)^* \to ((0 \to 1) \to 1) \approx (0 \to 1).$\\
\end{thlist}

\item[vii] \quad {\bf A BASE FOR $\{\mathbf A2, \mathbf A3, \mathbf A4\}$ :} 
\begin{thlist} 
\item[a] $(0 \to 1)^* \approx 0$;

\item[b] $(a \to b) \to a \approx b.$\\ 
\end{thlist}

\item[viii] \quad {\bf A BASE FOR $\{\mathbf A2, \mathbf A3, \mathbf A5\}$ : } 
\begin{thlist}
\item[a] $(0 \to (0 \to 1)) \to (0 \to 1)^{**}  \approx  (0 \to 1)^{**}$;
 
\item[b] $(a \to b) \to a \approx b.$ \\ 
\end{thlist}

\item[ix]\quad {\bf A BASE FOR $\{\mathbf A2, \mathbf A4, \mathbf A5 \}$:} 
\begin{thlist}
 \item[a] $(0 \to (0 \to 1)) \geq ((0 \to 1) \to 1)$; 
  
\item[b] $(a \to b) \to a \approx b.$\\ 
\end{thlist}

\item[x] \quad {\bf A BASE FOR $\{\mathbf A3, \mathbf A4, \mathbf A5\}$: } 
  \begin{thlist}
\item[a] $(0 \to 1)^* \to ((0 \to 1) \to 1) \approx (0 \to 1)$;
 
\item[b] $(a \to b) \to a \approx b.$\\ 
      \end{thlist}

\item[4] {\bf BASES FOR QUADRUPLES OF ALGEBRAS :}
\vspace{.3cm}
\item[i] \quad {\bf A BASE FOR $\{\mathbf A1, \mathbf A2, \mathbf A3, \mathbf A4\}$: } 
\begin{thlist}
\item[a] $(0 \to 1)^* \approx 0$.\\
\end{thlist}

\item[ii] \quad {\bf A BASE FOR $\{\mathbf A1, \mathbf A2, \mathbf A3, \mathbf A5\}$: } 
   \begin{thlist}
\item[a] $(0 \to (0 \to 1)) \to (0 \to 1)^{**}  \approx  (0 \to 1)^{**}.$\\
 \end{thlist}

\item[iii] \quad {\bf A BASE FOR $\{\mathbf A1, \mathbf A2, \mathbf A4, \mathbf A5\}$: } 
      \begin{thlist}
 \item[a] $(0 \to (0 \to 1)) \geq ((0 \to 1) \to 1)$.\\ 
       \end{thlist}

\item[iv] \quad {\bf A BASE FOR $\{\mathbf A1, \mathbf A3, \mathbf A4, \mathbf A5\}$: } 
      \begin{thlist}
\item[a]  $(0 \to 1)^* \to ((0 \to 1) \to 1) \approx (0 \to 1).$\\
       \end{thlist}

\item[v] \quad {\bf A BASE FOR $\{\mathbf A2, \mathbf A3, \mathbf A4, \mathbf A5\}$: } 
       \begin{thlist}
\item[a] $(a \to b) \to a \approx b.$\\ 
       \end{thlist}
\end{thlist}
\end{Theorem}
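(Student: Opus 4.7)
The plan is to exploit the Boolean structure of the subvariety lattice. By Corollary \ref{Runo1} together with Theorem \ref{SubIrr}, the only subdirectly irreducible members of $\mathbb{RUNO}1$ of cardinality greater than one are $\mathbf{A}1,\ldots,\mathbf{A}5$, and by Corollary \ref{CorLat} every subvariety of $\mathbb{RUNO}1$ is of the form $\mathbb{V}(\{\mathbf{A}i : i\in S\})$ for a unique $S\subseteq\{1,\ldots,5\}$, with $S$ being precisely the set of indices $i$ for which $\mathbf{A}i$ belongs to the subvariety. Hence to prove that a set $\Sigma$ of identities is a base (relative to $\mathbb{RUNO}1$) for the subvariety indexed by $S$, it suffices to verify the equivalence
\[
\mathbf{A}i\models\Sigma \iff i\in S \qquad (i=1,\ldots,5).
\]
The subvariety of $\mathbb{RUNO}1$ axiomatized by $\Sigma$ is generated by its subdirectly irreducibles, which under the above equivalence are exactly $\{\mathbf{A}i : i\in S\}$, so that this subvariety coincides with $\mathbb{V}(\{\mathbf{A}i : i\in S\})$.

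The verification itself proceeds by direct computation on the finite tables defining the five generators. First I would tabulate, once and for all, the values in each $\mathbf{A}i$ of the closed terms that recur throughout the statement, namely $0\to 1$, $(0\to 1)^*$, $(0\to 1)^{**}$, $0\to(0\to 1)$, $(0\to 1)\to 1$, $((0\to 1)\to 1)'$, $(0\to 1)^*\to((0\to 1)\to 1)$, and $(a\to b)\to a$ with $a:=0\to 1$, $b:=0\to a$, where $x^*:=x\to 0$ is the pseudocomplement. For instance, $0\to 1=2$ in every $\mathbf{A}i$, whereas $(0\to 1)^*=0$ in $\mathbf{A}1$--$\mathbf{A}4$ but equals $3$ in $\mathbf{A}5$, and $(0\to 1)\to 1=1$ in $\mathbf{A}1,\mathbf{A}3$ but equals $2$ in $\mathbf{A}2,\mathbf{A}4,\mathbf{A}5$. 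With this table in hand, each of the closed identities in the theorem reduces to a single look-up. All but one identity in the entire statement are closed; the sole exception, $x\to(x\to(x\to y)) \approx x\to(x\to y)$ in the base for $\{\mathbf{A}3\}$, needs a brief nine-case check in the three-element algebra $\mathbf{A}3$ and must be shown to fail in each of $\mathbf{A}1,\mathbf{A}2,\mathbf{A}4,\mathbf{A}5$ by exhibiting a single bad assignment.

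The principal obstacle is purely bookkeeping: thirty candidate bases, with one to three defining identities each, cross-checked against five algebras, amounts to several hundred elementary verifications. To keep the argument legible I would consolidate the computation into a single master table whose rows are the individual identities appearing in the theorem, whose columns are $\mathbf{A}1,\ldots,\mathbf{A}5$, and whose entries record holding or failure; each of the thirty cases then becomes a one-line appeal to the framework of the first paragraph, reading off that the column-indicator of the checkmarks matches the target set $S$. A minor pitfall is that several bases use the abbreviations $a$ and $b$, which must be expanded to $0\to 1$ and $0\to(0\to 1)$ before evaluating, but once this is done the remaining work is entirely routine.
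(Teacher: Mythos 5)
Your proposal is correct and follows essentially the same route as the paper, which likewise reduces each case to checking that the listed identities hold in exactly the generators $\mathbf{A}i$ with $i\in S$ and fail in the others, citing Theorem \ref{SubIrr} and Corollary \ref{CorLat} for why this finite check suffices. Your write-up merely makes the justification (via the Boolean subvariety lattice and the classification of subdirectly irreducibles) more explicit than the paper's one-line illustration of case (1)(i).
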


\begin{Proof} We illustrate the proofs by proving (1)(i): Routine verification shows that the algebra $\mathbf A1$ satisfies (a) and (b), while both (a) and (b) fail in the remaining four algebras , proving that (a) and (b) form an (equational) base for $\mathbb{V}(\mathbf{A})$.  Similar arguments apply for the remaining parts of the theorem.
\end{Proof}

\begin{Corollary}
Let $\mathbb{A} \in \mathbb{RUNO}1$ such that
$\mathbb{A} \models (0 \to 1) \to1 \approx 1.$ 
Then
$\mathbb{A} \models (0 \to 1)^* \approx 0.$
\end{Corollary}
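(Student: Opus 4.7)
The plan is to use the characterization of subdirectly irreducible algebras in $\mathbb{RUNO}1$ (Theorem \ref{SubIrr}) to reduce the statement to a finite computation on the five algebras $\mathbf{A}1,\ldots,\mathbf{A}5$. Since the hypothesis $(0 \to 1) \to 1 \approx 1$ and the conclusion $(0 \to 1)^* \approx 0$ are both equations in which no variables appear (only the constants $0,1$ and defined operations), they are preserved under homomorphisms and hence under taking subdirectly irreducible quotients; so I only need to check the implication on each SI algebra, then lift the conclusion back to $\mathbb{A}$ via Birkhoff's subdirect representation theorem.

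First I would take an arbitrary nontrivial subdirectly irreducible quotient $\mathbb{B}$ of $\mathbb{A}$. Since $\mathbb{RUNO}1$ is a variety, $\mathbb{B} \in \mathbb{RUNO}1$, and by Theorem \ref{SubIrr} we have $\mathbb{B} \in \{\mathbf{A}1,\mathbf{A}2,\mathbf{A}3,\mathbf{A}4,\mathbf{A}5\}$. Because $\mathbb{A}$ satisfies $(0 \to 1) \to 1 \approx 1$, so does $\mathbb{B}$. The next step is to scan the $\to$-tables of the five algebras to identify which of them satisfy this hypothesis. In every $\mathbf{A}i$ one reads off $a := 0 \to 1 = 2$, and then $2 \to 1$ equals $1$ in $\mathbf{A}1$ and $\mathbf{A}3$ but equals $2$ in $\mathbf{A}2,\mathbf{A}4,\mathbf{A}5$. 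So the only candidates for $\mathbb{B}$ are $\mathbf{A}1$ and $\mathbf{A}3$.

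In both $\mathbf{A}1$ and $\mathbf{A}3$, the underlying lattice is the 3-chain $0 < 2 < 1$, and the pseudocomplement satisfies $2^* = 0$ (equivalently, one reads $2 \to 0 = 0$ from the $\to$-table, since the semi-Heyting pseudocomplement is $x^* = x \to 0$). Hence $(0 \to 1)^* = 0$ holds in each of these algebras. Applying Birkhoff's theorem, $\mathbb{A}$ embeds subdirectly into a product of such quotients, and since $(0 \to 1)^* = 0$ holds componentwise in this product, it holds in $\mathbb{A}$.

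There is no real obstacle here: the argument is a textbook application of the subdirect product theorem, and the only computational burden is a handful of table lookups. A slicker alternative, if one is willing to quote the bases theorem just proved, is to observe that part (2)(ii) gives $\{(0 \to 1)^* \approx 0,\ (0 \to 1) \to 1 \approx 1\}$ as a base for $\mathbb{V}(\mathbf{A}1,\mathbf{A}3)$ relative to $\mathbb{RUNO}1$, and to check directly that the single identity $(0 \to 1) \to 1 \approx 1$ already cuts $\mathbb{RUNO}1$ down to $\mathbb{V}(\mathbf{A}1,\mathbf{A}3)$ (again by inspecting the five SI algebras). Either route yields the corollary.
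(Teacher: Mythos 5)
Your proof is correct and follows essentially the same route as the paper: reduce to the subdirectly irreducible members of $\mathbb{RUNO}1$ via Theorem \ref{SubIrr}, observe that among $\mathbf{A}1,\ldots,\mathbf{A}5$ only $\mathbf{A}1$ and $\mathbf{A}3$ satisfy $(0\to 1)\to 1\approx 1$, check that both satisfy $(0\to 1)^*\approx 0$, and lift back through the subdirect representation. If anything, your version is the more careful one, since the paper's one-line appeal to the bases theorem tacitly requires exactly the inspection of the five tables that you carry out explicitly.
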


\begin{Proof}
Let $\mathbf{A}$ be as in the hypothesis.  Then it follows from the preceding theorem that $\mathbf A \in \mathbb{V}(\mathbf{A1, A3})$.  Observe that the algebras $\mathbf{A}1$ and $\mathbf{A}3$ satisfy the conclusion.  So, the conclusion holds in $\mathbf A$.   
\end{Proof}

\section{More Identities}

\smallskip
We will give several more consequences to Theorem \ref{sub} in the rest of this section.
The following corollaries to Theorem \ref{sub} (or to Corollary \ref{Runo1}) are immediate.
Recall that $a := 0 \to 1$, $b := 0 \to a$.  Also, let $c := 0 \to b$.

\begin{Corollary} 
\begin{thlist}

\item[1] $\mathbb{RUNO}1 \models x^* \lor x^{**} \approx 1$. 

\item[2] $\mathbb{RUNO}1 \models x'{^*}'{^*}'{^*} \approx  x'{^*}$.  

\item[3] $\mathbb{RUNO}1 \models x{^*}'{^*}'{^*}' \approx  x{^*}'$.

\item[4] $\mathbb{RUNO}1  \models  [((x \to y) \to y) \to y ] \to y \approx (x \to y) \to y.$  

\item[5] $\mathbb{RUNO}1  \models  x \to [x \to (x \to (x \to y))] \approx x \to (x \to y).$ 

\item[6] $\mathbb{RUNO}1 \models 0 \to (0 \to (0 \to 1)) \approx (0 \to 1)$.  

\item[7] $\mathbb{RUNO}1 \models (0 \to (0 \to 1))^* \approx 0.$        
 
\item[8] $\mathbb{RUNO}1 \models (0 \to 1){^*}' \to (0 \to 1)\approx  0 \to 1$.

\item[9] $\mathbb{RUNO}1 \models x^* \lor (y \to z) \approx  (x^* \lor y) \to (x^* \lor z)$ (JID*).
.

\item[10] $\mathbb{RUNO}1 \models ((0 \to 1)\to 1) \to 1 \approx (0 \to 1) \to1$.  

\item[11] $\mathbb{RUNO}1 \models  c \approx a$.

\item[12]  $\mathbb{RUNO}1 \models 0 \to 1 \ \leq \ 0 \to (0 \to 1).$ 

\item[13]  $\mathbb{RUNO}1 \models x^* \to (y^* \to z^*) \approx  y^* \to (x^* \to z^*)$  (Weak Exchange).

\item[14]  $\mathbb{RUNO}1 \models (0 \to1) \to (0\to 1)^* \approx  0.$ 
 \end{thlist}
 \end{Corollary}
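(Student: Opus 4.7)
The key bridge is Corollary~\ref{Runo1}, which identifies $\mathbb{RUNO}1$ with the variety $\mathbb{V}(\mathbf{A1}, \mathbf{A2}, \mathbf{A3}, \mathbf{A4}, \mathbf{A5})$. Since an equation holds in a variety iff it holds in each of its generators, each of the fourteen claims reduces to verifying the stated identity in the five explicit finite algebras whose $\to$ and $'$ tables are displayed in Section~1. This is precisely the sense in which the corollary is ``immediate''.

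\textbf{Precomputing the named constants.} I would first tabulate the auxiliary constants $a := 0 \to 1$, $b := 0 \to a$, $c := 0 \to b$, together with $a^{*}, a^{**}, b^{*}$, by direct look-up from the operation tables. One obtains $a = 2$ in every $\mathbf{A}i$; $b = 1$ in $\mathbf{A1}, \mathbf{A2}, \mathbf{A5}$ and $b = 2$ in $\mathbf{A3}, \mathbf{A4}$; using $x^{*} = x \to 0$ one gets $a^{*} = 0$, $a^{**} = 1$, and in each algebra $c = 2 = a$. With these values in hand the ``constants-only'' items (6), (7), (8), (10), (11), (12), (14) each collapse to a specific equality in $\{0,1,2,3\}$ that holds by inspection; for instance (11) reads $2 = 2$, and (12), $a \le b$, becomes $2 \le 1$ or $2 \le 2$ depending on which generator one looks at.

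\textbf{The items with variables.} For the single-variable claims (1)--(3) one enumerates $x$ over the at most four elements of each $\mathbf{A}i$ and evaluates both sides. Item (1) is verified from the fact that in each generator the pair $(x^{*}, x^{**})$ lies in the Boolean skeleton and is complementary. Items (2) and (3) are the assertions that the unary maps $g(x) = (x')^{*}$ and $h(x) = (x^{*})'$ satisfy $g^{3} = g$ and $h^{3} = h$, which one confirms by iterating three times on each of the (3 or 4) domain elements. For the multi-variable items (4), (5), (9) (JID*), and (13) (weak exchange) I would simply run through the at most $4^{3} = 64$ assignments per algebra; tedious but entirely mechanical, since every needed operation is tabulated explicitly.

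\textbf{Main obstacle.} There is no conceptual difficulty; the one nuisance is the bookkeeping for (9) and (13) on the four-element algebra $\mathbf{A5}$. One can cut the cases down substantially by observing that whenever a $^{*}$-term evaluates to $1$ the identity trivialises on that assignment, leaving only the few assignments for which $x^{*}$ (respectively $x^{*}, y^{*}$) equals $0$ to be checked by hand. Moreover, since Corollary~\ref{Primal} has already established primality of each $\mathbf{A}i$, no algebraic manipulation beyond the table verification itself is needed: an identity is true on all tuples iff it is true termwise, and the operation tables supply that directly.
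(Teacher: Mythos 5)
Your strategy coincides with the paper's own: the corollary is asserted there to be ``immediate'' from Corollary~\ref{Runo1}, i.e.\ each identity holds in the variety iff it holds in the five generating algebras, and is then checked by finite table look-up, exactly as you propose. One entry in your precomputed table is wrong, however: in $\mathbf{A5}$ one has $(0\to 1)^{*}=2\to 0=3$, not $0$, and consequently $(0\to 1)^{**}=3\to 0=2$, not $1$; the equalities $a^{*}=0$ and $a^{**}=1$ hold only in $\mathbf{A1}$--$\mathbf{A4}$ (indeed $(0\to 1)^{*}\approx 0$ is precisely the base the paper gives for $\mathbb{V}(\mathbf{A1},\mathbf{A2},\mathbf{A3},\mathbf{A4})$ relative to $\mathbb{RUNO}1$, so it must fail in $\mathbf{A5}$). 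This slip does not affect the truth of any of the fourteen items --- for (8) one gets $3'\to 2=3\to 2=2=0\to 1$ in $\mathbf{A5}$, and for (14) one gets $2\to 3=0$ --- but your verifications of (8) and (14) on $\mathbf{A5}$ need to be redone with the correct values rather than with $a^{*}=0$.
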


\begin{corollary}
\begin{thlist}
\item[a]  $\mathbb{V}(\mathbf{A1, A2, A3, A4}) \models (0 \to 1)^* \approx 0$. 

\item[b]   $\mathbb{V}(\mathbf{A1, A2, A3, A4}) \models x{^*}' \approx  x^{**}$.

\item[c]  $\mathbb{V}(\mathbf{A1, A2, A3, A4}) \models  [(0 \to 1) \to 1]^{**} \approx 1.$

\item[d]  $\mathbb{V}(\mathbf{A1, A2, A3, A4}) \models (0 \to 1) \to [0 \to (0 \to 1)]^* \approx 
[0 \to (0 \to 1)]^*.$

\item[e] $\mathbb{V}(\mathbf{A1, A2, A3, A4}) \models  [0 \to (0 \to 1)]^* \to (0 \to 1)^{**} \approx 0 \to 1.$

\item[f]   $\mathbb{V}(\mathbf{A1, A2, A3, A4}) \models x'{^*}'{^*} \approx  x'{^*}$.

 \item[g]  $\mathbb{V}(\mathbf{A1, A2, A3, A4}) \models (0 \to1) \to (0\to 1)^* \approx  (0 \to 1)^*.$ 
 
 \item[h]  $\mathbb{V}(\mathbf{A1, A2, A3, A4}) \models (0 \to 1){^*}'^* \approx (0 \to 1)^*$.  
 
 \item[i]  $\mathbb{V}(\mathbf{A1, A2, A3, A4}) \models [0 \to (0 \to (0 \to 1))]'^*  \to (0 \to (0 \to 1)) \approx  0 \to 1$.
 
 \item[j] $\mathbb{V}(\mathbf{A1, A2, A3, A4}) \models 0 \to (0 \to 1)^*  \approx  1.$ 
 
 \item[k] $\mathbb{V}(\mathbf{A1, A2, A3, A5}) \models [0 \to (0 \to 1)] \to (0 \to 1)^{**}  \approx  (0 \to 1)^{**}.$

\item[l]  $\mathbb{V}(\mathbf{A1, A2, A4, A5}) \models [0 \to (0 \to 1)] \geq [(0 \to 1) \to 1]$. 

\item[m] $\mathbb{V}(\mathbf{A1, A3, A4,A5}) \models (0 \to 1)^* \to [(0 \to 1) \to 1] \approx 0 \to 1.$ 

\item[n]  $\mathbb{V}(\mathbf{A2, A3, A4, A5}) \models (a \to b) \to b \approx a$.

\item[o]  $\mathbb{V}(\mathbf{A2, A3, A4, A5}) \models (a \to b) \approx b \to a.$

\item[p]  $\mathbb{V}(\mathbf{A2, A3, A4, A5}) \models (a \to b) \to a \approx b.$

\end{thlist}
\end{corollary}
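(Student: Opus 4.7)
The plan is to reduce each of the sixteen assertions (a)--(p) to a finite inspection on the generating algebras via J\'{o}nsson's lemma. Since $\mathbb{RUNO}1$ is a discriminator variety (Theorem \ref{Runo1 discriminator}) and therefore congruence-distributive, and since by Corollary \ref{Runo1} it is generated by the simple (in fact primal, by Corollary \ref{Primal}) algebras $\mathbf{A1},\ldots,\mathbf{A5}$, J\'{o}nsson's theorem gives that for every subset $S \subseteq \{\mathbf{A1}, \mathbf{A2}, \mathbf{A3}, \mathbf{A4}, \mathbf{A5}\}$ the subdirectly irreducible members of $\mathbb{V}(S)$ are, up to isomorphism, exactly the elements of $S$. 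Consequently, an identity holds in $\mathbb{V}(S)$ if and only if it holds in every algebra listed in $S$.

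Granting this reduction, each part of the corollary becomes a purely computational claim. For parts (a)--(m), which involve only the constants $0,1$ and the derived constants $a = 0 \to 1$ and $b = 0 \to a$, I would first read off the values of $a$ and $b$ from the tables (noting that $a=2$ in every $\mathbf{A}i$, while $b$ varies), then substitute into the identity and compare the two sides using the explicit $\to$, $'$, $\lor$, $\land$ tables, abbreviating $x^* := x \to 0$. Parts (n)--(p), which also involve only $a$ and $b$, reduce to a single line per algebra. In each case it remains only to confirm that the identity holds in each of the algebras explicitly listed, which by the J\'{o}nsson reduction is sufficient.

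The only real obstacle is bookkeeping. The five algebras differ precisely in the ``red'' entries of their $\to$-tables (for instance $2 \to 1 = 2$ in $\mathbf{A2}$ and $\mathbf{A4}$ but $2 \to 1 = 1$ in $\mathbf{A1}$ and $\mathbf{A3}$), and these are exactly the entries that discriminate among the identities. To keep the check tractable I would first tabulate the values of the auxiliary terms $a,\,b,\,a^*,\,b^*,\,a^{**},\,a \to 1,\,a \to b,\,b \to a,\,(a \to 1)'$ and the like in each $\mathbf{A}i$, and then read off both sides of each identity directly from that table. No deeper ingredient is required, since the conceptual work---identifying the subdirectly irreducibles (Theorem \ref{SubIrr}) and invoking J\'{o}nsson---has already been carried out.
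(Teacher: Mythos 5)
Your proposal is correct and matches the paper's treatment: the paper presents these as immediate corollaries of Corollary \ref{Runo1}, i.e.\ each identity is checked directly on the finitely many generating algebras listed. (A minor remark: you do not even need J\'{o}nsson's theorem here, since an identity holds in $\mathbb{V}(S)$ if and only if it holds in every member of $S$, identities being preserved under $H$, $S$ and $P$; J\'{o}nsson is only needed for the converse direction of basis results, not for verifying that identities hold.)
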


\begin{corollary}
\begin{thlist}
\item[a]  $\mathbb{V}(\mathbf{A1, A2, A5}) \models 0 \to (0 \to 1) \approx 1$. 

\item[b]  $\mathbb{V}(\mathbf{A1, A2, A5}) \models x \to [x \to (x \to y)] \approx x \to y$. 

\item[c]  $\mathbb{V}(\mathbf{A1}, \mathbf{A2}, \mathbf{A5}) \models 0 \to (0 \to 1) \approx 1$. 

\item[d] $\mathbb{V}(\mathbf{A1, A3, A4}) \models (a \to b) \to a  \approx  a$. 

\item[e]  $\mathbb{V}(\mathbf{A1, A3, A4}) \models [0 \to (0 \to 1)] \leq [(0 \to 1) \to 1]$. %

\item[f]  $\mathbb{V}(\mathbf{A1, A3, A4}) \models (0 \to 1) \to [0 \to (0 \to 1)] \approx 1.$ 

\item[g]  $\mathbb{V}(\mathbf{A2, A4, A5}) \models (0 \to 1) \to 1\approx 0 \to 1.$ 

\item[h] $\mathbb{V}(\mathbf{A2,A4,A5}) \models [(x \to y) \to y] \to y \approx x \to y.$ 

\item[i] $\mathbb{V}(\mathbf{A2, A4, A5}) \models 0 \to [0 \to (0 \to 1)] \approx (0 \to 1) \to 1$. 

\item[j]  $\mathbb{V}(\mathbf{A2, A4, A5}) \models 0 \to [0 \to (0 \to 1)] \approx [(0 \to 1) \to 1] \to 1$.

\item[k]  $\mathbb{V}(\mathbf{A3, A4, A5}) \models (0 \to1){^*} \to (0\to 1) \approx  0 \to 1.$ 

\item[l]  $\mathbb{V}(\mathbf{A3, A4, A5}) \models (0 \to 1)^* \to [0 \to (0 \to 1)] \approx  0 \to (0 \to 1)$. 
\end{thlist}
\end{corollary}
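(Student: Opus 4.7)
The plan is to exploit the special structure of $\mathbb{RUNO}1$ established in the preceding sections. Each $\mathbf{Ai}$ is primal by Corollary \ref{Primal}, and Corollary \ref{CorLat} says the lattice of subvarieties of $\mathbb{RUNO}1$ is the Boolean lattice $2^5$ whose atoms are $\mathbb{V}(\mathbf{A1}),\ldots,\mathbb{V}(\mathbf{A5})$. An application of J\'onsson's theorem then yields the fundamental reduction: for any nonempty subset $S\subseteq\{\mathbf{A1},\ldots,\mathbf{A5}\}$, an identity $s\approx t$ holds in $\mathbb{V}(S)$ if and only if $s\approx t$ holds in each $\mathbf{A}\in S$. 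Accordingly, each of the twelve parts (a)--(l) reduces to a finite verification in the three algebras appearing on the left-hand side.

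For the identities (a), (c), (d), (e), (f), (g), (i), (j), (k), (l) the only terms involved are built from the constants $0, 1$ (and the abbreviations $a := 0 \to 1$, $b := 0 \to a$), so for each part the verification amounts to computing a handful of constant values in the three designated algebras and comparing them. For example, in part (a) one reads off $0 \to 1 = 2$ and then $0 \to 2 = 1$ in each of $\mathbf{A1}$, $\mathbf{A2}$, $\mathbf{A5}$, so the term $0 \to (0 \to 1)$ equals $1$ in all three generators, giving (a). Parts (b) and (h) are genuinely universally quantified in two additional variables $x, y$; for each of them the verification is case-by-case over the finitely many pairs $(x, y)$ in each of the three specified algebras ($9$ pairs in each three-element algebra and $16$ in $\mathbf{A5}$).

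The only real obstacle is bookkeeping: nothing conceptual is required beyond Corollary \ref{Runo1}, Corollary \ref{CorLat}, and the primality of the $\mathbf{Ai}$, but the sheer volume of table-lookups is considerable. A cleaner presentation would first tabulate, for each $\mathbf{Ai}$, the values of the recurring constant terms $0 \to 1$, $0 \to (0 \to 1)$, $(0 \to 1) \to 1$, $(0 \to 1)^*$, and their iterates; then each of (a), (c)--(g), (i)--(l) collapses to reading off three entries of that master table and checking equality. I would therefore begin the proof by assembling such a table from the defining tables of $\mathbf{A1},\ldots,\mathbf{A5}$, dispatching (a), (c)--(g), (i)--(l) by inspection, and then handling (b) and (h) by a separate but analogous mechanical enumeration over $x$ and $y$ in each of the three listed generators.
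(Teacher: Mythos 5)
Your proposal is correct and matches the paper's own (unstated) argument: the paper simply declares these identities ``immediate'' consequences of the generation result $\mathbb{RUNO}1=\mathbb{V}(\mathbf{A1},\ldots,\mathbf{A5})$, i.e.\ each part is verified by finite table-lookup in the listed generators, exactly as you propose. One small remark: the reduction ``$\mathbb{V}(S)\models s\approx t$ iff every $\mathbf{A}\in S\models s\approx t$'' needs neither J\'{o}nsson's theorem nor primality --- it is just the preservation of identities under $H$, $S$, and $P$ --- so your argument is if anything over-equipped, not deficient.
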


\begin{corollary}
\begin{thlist}
 
\item[1]  $\mathbb{V}(\mathbf{A1, A3}) \models (0 \to 1) \to 1 \approx 1.$  
 
\item[2]   $\mathbb{V}(\mathbf{A1, A4}) \models [0 \to (0 \to 1)] \approx (0 \to 1) \to 1.$ 

\item[3]   $\mathbb{V}(\mathbf{A2, A5}) \models (0 \to 1) \to [0 \to (0 \to 1)] \approx  0 \to 1.$  

\item[4]  $\mathbb{V}(\mathbf{A3}, \mathbf{A4}) \models 0 \to (0 \to 1)\approx  0 \to 1.$ 
 \end{thlist}
\end{corollary}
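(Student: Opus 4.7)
The plan is to reduce each of the four identities to a finite verification on the generator algebras. By Corollary \ref{Primal} each $\mathbf{A}i$ is primal, and by Theorem \ref{SubIrr} the only subdirectly irreducible members of $\mathbb{RUNO}1$ (up to isomorphism) are $\mathbf{A}1,\ldots,\mathbf{A}5$. Consequently, by J\'onsson's theorem applied inside $\mathbb{RUNO}1$, the subdirectly irreducibles of $\mathbb{V}(\mathbf{A}i_1,\ldots,\mathbf{A}i_k)$ are exactly $\{\mathbf{A}i_1,\ldots,\mathbf{A}i_k\}$. Therefore each asserted identity holds in the variety in question iff it holds in each listed generator algebra, and this is a pure table check.

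Concretely, I would first tabulate the relevant derived constants $a := 0 \to 1$, $b := 0 \to a$ and $a \to 1$ in each algebra, by reading off the $\to$-tables given in the introduction (row index is the first argument, column index is the second). In every one of the five algebras we get $a = 2$. The pair $(b,\, a \to 1)$ then works out to $(1,1)$ in $\mathbf{A}1$, $(1,2)$ in $\mathbf{A}2$, $(2,1)$ in $\mathbf{A}3$, $(2,2)$ in $\mathbf{A}4$, and $(1,2)$ in $\mathbf{A}5$. With this short table in hand, parts (1), (2), and (4) are immediate: (1) asks for $a \to 1 = 1$, which is true in $\mathbf{A}1$ and $\mathbf{A}3$; (2) asks for $b = a \to 1$, which holds in $\mathbf{A}1$ (both $1$) and in $\mathbf{A}4$ (both $2$); (4) asks for $b = a$, which holds in $\mathbf{A}3$ and $\mathbf{A}4$ (both sides equal $2$).

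Part (3) needs one extra lookup, namely $a \to b = 2 \to b$. In $\mathbf{A}2$ this is $2 \to 1 = 2 = a$, and in $\mathbf{A}5$ this is also $2 \to 1 = 2 = a$, so the identity $(0 \to 1) \to [0 \to (0 \to 1)] \approx 0 \to 1$ holds in both generators, hence in $\mathbb{V}(\mathbf{A}2,\mathbf{A}5)$. There is no deep obstacle here; the only source of error is miscopying a table entry or forgetting the row-first/column-second convention, so the writing task is essentially that of presenting the derivation of the five derived constants cleanly and letting the four conclusions follow by inspection.
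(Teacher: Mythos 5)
Your proposal is correct and follows essentially the same route as the paper, which offers no explicit argument but declares these identities ``immediate'' consequences of the identification of $\mathbb{RUNO}1$ with $\mathbb{V}(\mathbf{A}1,\ldots,\mathbf{A}5)$ --- that is, a reduction to checking each equation in the listed generators followed by table lookups, and all of your computed values of $a$, $b$, and $a \to 1$ (and of $a \to b$ in $\mathbf{A}2$ and $\mathbf{A}5$) agree with the published $\to$-tables. The only remark is that your detour through primality, the classification of subdirectly irreducibles, and J\'{o}nsson's theorem is unnecessary for verifying identities: since equations are preserved under $H$, $S$, and $P$, an identity holds in $\mathbb{V}(\mathbf{A}i_1,\ldots,\mathbf{A}i_k)$ if and only if it holds in each $\mathbf{A}i_j$, with no appeal to the structure of the subvariety lattice.
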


\begin{corollary} \label{corA}
\begin{thlist}
\item[a]  $\mathbb{V}(\mathbf{A1}) \models (0 \to 1) \to [0 \to (0 \to 1)] \approx  0 \to (0 \to 1)$. 

\item[b]  $\mathbb{V}(\mathbf{A3}) \models (0 \to 1) \to 1 \approx 1.$

\item[c] $\mathbb{V}(\mathbf{A3}) \models 0 \to (0 \to 1) \approx  0 \to 1.$

\item[d]  $\mathbb{V}(\mathbf{A5}) \models (0 \to1){^*}' \approx  (0 \to 1)^*.$ 

\item[e]  $\mathbb{V}(\mathbf{A5}) \models (0 \to 1){^*}^* \approx (0 \to 1)$.

\item[f]  $\mathbb{V}(\mathbf{A5}) \models 0 \to [0 \to (0 \to1)] \approx  0 \to (0 \to 1)^*.$ 

\item[g]  $\mathbb{V}(\mathbf{A5}) \models (0 \to 1)^* \to 1 \approx 1$.

\item[h]  $\mathbb{V}(\mathbf{A5}) \models x \to (y \to z) \approx  y \to (x \to z)$  (Exchange). 

\item[i]  $\mathbb{V}(\mathbf{A5}) \models x' \lor (y \to z) \approx  (x' \lor y) \to (x' \lor z)$  (JID).

\item[j]  $\mathbb{V}(\mathbf{A5}) \models [(0 \to 1) \to 1]{^*}^* \approx  0 \to 1.$ 

\item[k]  $\mathbb{V}(\mathbf{A5}) \models (0 \to 1) \lor (0 \to 1)^* \approx 1$. 
\end{thlist}
\end{corollary}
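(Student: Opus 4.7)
The plan is to use the standard fact that, for any algebra $\mathbf{B}$, an equational identity is valid in $\mathbb{V}(\mathbf{B}) = \mathsf{HSP}(\mathbf{B})$ if and only if it is valid in $\mathbf{B}$ itself, since identities are preserved under homomorphic images, subalgebras and products. Consequently each of the eleven assertions in Corollary \ref{corA} reduces to a finite table verification on one of the three-element algebras $\mathbf{A1}$, $\mathbf{A3}$ or on the four-element algebra $\mathbf{A5}$ displayed in the introduction.

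For the ground (constants-only) identities I would evaluate once and for all the key quantities $a := 0 \to 1$, $a^* := a \to 0$, $a'$, $a^{**}$ and $b := 0 \to a$ in each of $\mathbf{A1}$, $\mathbf{A3}$, $\mathbf{A5}$. In $\mathbf{A1}$ (chain $0 < 2 < 1$) one has $a = 2$, $b = 0 \to 2 = 1$, so part (a) collapses to $2 \to 1 = 2 = 0 \to 2$, which is a direct table look-up. In $\mathbf{A3}$ one has $a = 2$, $b = 0 \to 2 = 2$, and parts (b), (c) reduce to $2 \to 1 = 1$ and $0 \to 2 = 2$ respectively. In $\mathbf{A5}$ one computes $a = 2$, $a' = 3$, $a^* = 2 \to 0 = 3$, $a^{**} = 3 \to 0 = 2$; then each of (d), (e), (f), (g), (j), (k) becomes a single equality check. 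For instance (e) reads $a^{**} = 2 = a$, and (k) reads $2 \lor 3 = 1$, which holds since $\{0, 2, 3, 1\}$ is the 4-element Boolean lattice.

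The only identities with free variables are (h) (Exchange, $x \to (y \to z) \approx y \to (x \to z)$) and (i) (JID, $x' \lor (y \to z) \approx (x' \lor y) \to (x' \lor z)$), both on $\mathbf{A5}$. For these I would tabulate the full $\to$ and $'$ operations on $\mathbf{A5}$ and verify the $4^3 = 64$ assignments for (h); symmetry in $x$ and $y$ cuts the count essentially in half, and whenever $x \in \{0, 1\}$ or $y \in \{0, 1\}$ the equality is immediate, leaving only a handful of triples with $x, y \in \{2, 3\}$ to check by hand. Part (i) is then handled in the same way, or derived from (h) together with the $\lor$-De~Morgan table.

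The only obstacle is bookkeeping: no step requires a new idea, and the verification, while tedious for (h) and (i), is bounded in advance by the small size of $\mathbf{A5}$. A short hand computation (or a brief machine-assisted sweep) completes the proof.
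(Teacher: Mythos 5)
Your approach is exactly the paper's: Section~6 presents these as immediate corollaries of Corollary~\ref{Runo1}, which amounts to checking each identity in the finitely many generating algebras, and your reduction via $\mathsf{HSP}$ preservation plus table look-up is that same argument made explicit. Two small slips in your sample computations should be corrected, though neither affects the conclusions: in $\mathbf{A1}$ one has $2 \to 1 = 1$ and $0 \to 2 = 1$ (not $2$), so part (a) reads $2 \to 1 = 1 = 0 \to 2$; and in $\mathbf{A5}$ the De Morgan operation fixes $2$, so $a' = 2' = 2$ (not $3$) --- indeed $a'=a$ is precisely the unorthodoxy condition --- while the value you actually need, $a^* = 2 \to 0 = 3$, is correct.
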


\begin{Remark}
Each of the equations in Corollary \ref{corA} gives a 1-base for the variety generated by the corresponding algebra.
\end{Remark}

\section{Toward a logic for the variety $\mathbb{RUNO}1$}

In this section we introduce and investigate a propositional logic, in Hilbert-style, called  
$\mathcal{RUNO}1$. 
It will be shown that $\mathcal{RUNO}1$ is
 implicative in the sense of Rasiowa, and hence is
algebraizable (in the sense of Blok and Pigozzi) with
 the variety $\mathbb {RUNO}1$ of regular unorthodox algebras of level 1 as its equivalent algebraic semantics.

\subsection{Abstract Algebraic Logic:  Basic definitions and results} 
 
\

\medskip
In this subsection, we recall the basic definitions and results of Abstract Algebraic Logic that will play a crucial role. 

\begin{itemize}
\item Let us fix the  language $\mathbf L = \langle \lor, \land, \to, \sim,  \bot, \top\rangle$.

\item The set of {\bf formulas} in {\bf L} will be denoted by $\mbox{\it Fm}_{\,\bf L}$.   

\item The set of formulas $\mbox{\it Fm}_{\,\bf L}$  can be turned into an {algebra of formulas}, denoted by ${\bf Fm}_{\bf L}$, in the usual way. 

\item  $\Gamma$ denotes a set of formulas and lower case Greek letters denote single formulas.  

\item The homomorphisms from the formula algebra ${\bf Fm}_{\bf L}$ into an $\mathbf L$-algebra (i.e, an algebra of type $\mathbf L$) $\mathbf A$ are called {\bf interpretations} (or {\bf valuations}) in A. The set of all such interpretations is denoted by  $Hom({\bf Fm}_{\bf L},\mathbf A)$. 

\item If $h \in Hom({\bf Fm}_{\bf L},\mathbf A)$ then the {\em interpretation of a formula} $\alpha$ under $h$ is its image $h( \alpha) \in A$. 
\item  $h[\Gamma]$ denotes the set $\{h\phi \ | \ \phi \in \Gamma\}$.  
\end{itemize}

\newpage
{\bf Consequence Relations and Logics}: 
\begin{itemize}
\item  A {\bf consequence relation} on $Fm_{\mathbf L}$ is a binary relation $\vdash$ between sets of
formulas and formulas that satisfies the following conditions:

 for all $\Gamma$, $\Delta  \subseteq Fm_{\mathbf L}$ and $\phi \in Fm_{\bf L}$ :

(i) $\phi \in \Gamma$ implies $\Gamma \vdash \phi,$

(ii) $ \Gamma \vdash \phi$ and $\Gamma \subseteq \Delta$ imply $\Delta \vdash \phi,$

(iii) $\Gamma \vdash \phi$ and $\Delta \vdash \beta$ for every $\beta \in \Gamma$ imply $\Delta \vdash \phi.$\\

\item A consequence relation $\vdash$ is {\bf finitary} if
$\Gamma \vdash \phi$ implies $\Gamma' \vdash \phi$ for some finite $\Gamma' \subseteq \Gamma$. \\

\item  
A consequence relation $\vdash$ is {\bf structural} if
$\Gamma \vdash \phi$  implies $\sigma[\Gamma] \vdash \sigma(\phi)$ for every substitution $\sigma$ 
$\in Hom(\mathbf{Fm_L}, \mathbf{Fm_L})$.   ($\sigma[\Gamma] :=\{\sigma(\alpha) : \alpha \in \Gamma\}$). \\

\item 
A {\bf logic} (or {\bf deductive system}) is a pair $\mathcal L := \langle \mathbf{L}, \vdash_{\mathcal L} \rangle$, where $\mathbf{L}$ is a propositional
language and $\vdash_{\mathcal L}$ is a finitary, structural consequence relation on Fm$_{\mathbf L}$.

\item {\bf Logics in a Hilbert-style}:

One way to present a logic $\mathcal L$ is by displaying it (syntactically) in {\bf Hilbert-style}; that is, giving its axioms and rules of inference which induce a
 consequence relation $\vdash_{\mathcal L}$ as follows:\\
\indent  $\Gamma \vdash_{\mathcal L} \phi$ if there is  a {\bf (formal) proof} (or, a {\bf derivation}) of $\phi$ from $\Gamma$, where a proof is defined as a 
 sequence of formulas $\phi_1, \dots, \phi_n$, $n \in \mathbb N$, such that
$\phi_n = \phi$, and for every $i \leq n$, one of the following conditions holds:

(i) $\phi_i  \in \Gamma$,

(ii) there is an axiom $\psi$  and a substitution $\sigma$ such that $\phi_i = \sigma \psi $, 

(iii) there is a rule $\langle \Delta, \psi \rangle$ and a substitution $\sigma$  such that $\phi_i  = \sigma\psi$  and
$\sigma[\Delta] \subseteq \{\phi_j : j < i\}$. 
 
\item  Identities in $\mathbf L$ are ordered pairs of $\mathbf L$-formulas that will be written in the form $\alpha \approx \beta$.  

\item If $\bar{x}$ is a sequence of variables and $h$ is an interpretation in $\mathbf{A}$, then we write $\bar{a}$ for $h(\bar{x})$.

\item $\mathbf{A} \models \alpha \approx \beta \quad \mbox{ if and only if } \quad \alpha^{\mathbf A} (\bar{a}) = \beta^{\mathbf A} (\bar{a}),  
\mbox{ for all } \bar{a} \in \mathbf A.$  

\item For a class $\mathbb K$ of algebras of type {\bf L},  
$\mathbb{K} \models \alpha \approx \beta \mbox{ if and only if } \mathbf{A}\models \alpha \approx \beta, \mbox{ for all }\mathbf A \in \mathbb K.$ 

\item
We define the relation $\models_{\mathbb K}$ between a set $\Gamma$ of identities and an identity $\alpha \approx \beta$ as follows: 

\quad $\Gamma \models_{\mathbb K} \alpha \approx \beta \quad \text{ if and only if }$ \quad
 for every $\mathbf A \in \mathbb{K}$ and for every interpretation $\bar{a}$ of the 
 variables of $\Gamma \cup  \{\alpha \approx \beta\}$ in $\mathbf{A}$, \\
{\bf if $\phi^{\mathbf A}(\bar{a} ) = \psi^{\mathbf A} (\bar{a}) $,  for every $\phi \approx \psi \in \Gamma$, then \\ 
\quad $\alpha^{\mathbf A}(\bar{a}) = \beta^{\mathbf A}(\bar{a})$}.

\item
The relation $\models_{\mathbb{K}}$ is called the {\bf (semantic) equational consequence relation determined by $\mathbb K$}.

\item
In this case, we say that $\alpha \approx \beta$ is a {$\mathbb K$-consequence} of $\Gamma$. \\
\end{itemize}

\subsection{{\bf Algebraizability of a logic}} 

\

\medskip
\noindent {\bf Algebraic Semantics for a Logic:}

Let $\langle L, \vdash_L\rangle$ be a logic and K a class of $L$-algebras. K is called an {\bf algebraic semantics} for $\langle L, \vdash_L\rangle$ 
if $\vdash_L$ can be interpreted in $\vdash_K$ in the following sense:

There exists a finite set $\delta_i (p) \approx \epsilon_i (p)$, for $i < n$, of identities with a single variable $p$ such that, for all $\Gamma \cup \phi \subseteq Fm$ and each $ j < n$, 
$$ \quad \Gamma \vdash_L \phi \Leftrightarrow  \{\delta_i [\psi /p]  \approx \epsilon_i [\psi /p], i < n, \psi \in \Gamma\} 
\models_K \delta_j [\phi /p] \approx \epsilon_j [\phi /p],$$ 

\indent where $\delta[\psi / p]$  denotes the formula obtained by the substitution of $\psi$ at every occurrence of $p$ in $\delta. $

 The identities $\delta_i \approx \epsilon_i$, for $i < n$, are called {\bf defining identities} for $\langle L, \vdash_L\rangle$ and $\mathbb K$.

\medskip
\noindent {\bf Equivalent Algebraic Semantics and Algebraizable Logic}:

\begin{Definition}  Let $\mathcal S$ be a logic over a language {\bf L} and $\mathbb K$ an algebraic semantics
of S with defining equations $\delta_i(p) \approx \epsilon_i(p)$, $i \leq n$.  Then, $\mathbf K$ is an {\bf equivalent
algebraic semantics} of $\mathbf{S}$ if there exists a {\bf finite} set $\{\Delta_j (p, q) : j \leq m\}$ of
formulas in two variables satisfying the condition:

For every  $\phi  \approx \psi \in Eq_{\mathbf L}$,

$\phi  \approx \psi 
            \models_K \{\delta_i(\Delta_j (\phi, \psi)) \approx \epsilon_i(\Delta_j (\phi, \psi)) : i \leq n, j \leq m\}$ and

$\{\delta_i(\Delta_j (\phi, \psi)) \approx \epsilon_i(\Delta_j (\phi, \psi)) : i \leq n, j \leq m\} \ \models_K  \phi  \approx \psi$.\\

\noindent The set $\{\Delta_j (p, q) : j \leq m \}$ is called an {\bf equivalence system}. \\

A logic is {\bf algebraizable (in the sense of Blok and Pigozzi)} if it has an equivalent algebraic semantics.
\end{Definition}

\subsection{Algebraizabilty of Implicative Logics}:
\

\medskip

The following definition  is well known (see \cite[page 179]{Ra74}). 
\begin{definition} {\rm \cite{Ra74}}
	Let $\mathcal L$  be a logic in a language that includes a binary connective $\to$, either primitive or defined by a term in exactly two variables. Then $\mathcal L$ is called an implicative logic with respect to the binary connective $\to$ if the following conditions are satisfied:
	\begin{itemize}
		\item[\rm (IL1)] $\vdash_{\mathcal L} \alpha \to \alpha$. 
		\item[\rm (IL2)] $\alpha \to \beta, \beta\to \gamma \vdash_{\mathcal L} \alpha \to \gamma$.
		\item[\rm (IL3)] For each connective $f$ in the language of arity $n>0$,\\ $\left\{\begin{array}{c}
		\alpha_1 \to \beta_1, \ldots, \alpha_n \to \beta_n, \\
		\beta_1 \to \alpha_1, \ldots, \beta_n \to \alpha_n
		\end{array}
		\right\} \vdash_{\mathcal L} f(\alpha_1, \ldots, \alpha_n) \to f(\beta_1, \ldots, \beta_n)$.
		\item[\rm (IL4)] $\alpha, \alpha \to \beta \vdash_{\mathcal L} \beta$.
		\item[\rm (IL5)] $\alpha \vdash_{\mathcal L} \beta \to \alpha$.
	\end{itemize}
\end{definition}

\begin{definition} 
(Rasiowa) 
	Let $\mathcal{L}$  be an implicative logic in $\mathbf L$  with $\to$. \\ An {\bf $\mathcal{L}$-algebra} is an algebra $\mathbf A$ in the language $\mathbf L$ that has an element $1$ with the following properties:
	\begin{quote}
		\begin{itemize}
			\item[{\rm (LALG1)}] For all $\Gamma \cup \{\phi\} \subseteq \mbox{\it Fm} $ and all $h \in Hom({\bf Fm}_{\bf L}, \mathbf A)$,\\
			 if $\Gamma \vdash_{\mathcal{L}} \phi$ and $h \Gamma \subseteq \{1\}$ then $h \phi = 1$, 
			\item[{\rm (LALG2)}] For all $a,b \in A$, if $a \to b = 1$ and $b \to a = 1$ then $a=b$.
		\end{itemize}
	\end{quote} 
	
\noindent	The class of $\mathcal{L}$-algebras is denoted by $Alg^*(\mathcal{L})$. 
\end{definition}

\begin{theorem} \label{algebraizabilityOfImplicative}
(The algebraizability theorem of Blok and Pigozzi)\\
Let $\mathcal L$ be an implicative logic and $\mathbb K =Alg^*{\mathcal (L)}$.   $\mbox{For all } \Gamma \cup \{\phi\} \subseteq  \mbox{\it Fm, }$ 
and $\mbox{for all } \Sigma \cup \{\epsilon \approx \delta\} \subseteq  \mbox{\bf Id, }$ the following properties hold:
        \begin{quote}
\begin{enumerate}
\item[{\rm(ALG1)}] 	$\Gamma \vdash_{\mathcal{L}} \phi \mbox{ if and only if }$ 
	$\{\alpha \approx 1: \alpha \in \Gamma\} \models_{Alg^*(\mathcal{L})} \phi \approx 1$. 

\item[{\rm(ALG2)}]    $\Sigma \models_K \epsilon \approx \delta \mbox{ if and only if }$

  \qquad      $ \{\alpha \to \beta, \beta \to \alpha : \alpha \approx \beta \in \Sigma\} \vdash_{\mathcal L} \{ \epsilon \to \delta, \delta \to \epsilon\} $. 
  
\item[{\rm(ALG3)}]   $\phi \vdash\dashv_{\mathcal{L}} \{\phi \to 1, 1 \to \phi\}$.

\item[{\rm(ALG4)}]  $\epsilon \approx \delta \models _{\mathbb K} \{\epsilon \to \delta \approx 1, \delta \to \epsilon \approx 1\}$ and

\qquad  $\{\epsilon \to \delta \approx 1, \delta \to \epsilon \approx 1\} \models_{\mathbb K} \epsilon \approx \delta $.
\end{enumerate}	
\end{quote}
\end{theorem}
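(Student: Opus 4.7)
The plan is to verify the four conditions by identifying the right defining equation and equivalence formulas, and then leaning on the Lindenbaum--Tarski quotient to bridge syntax and semantics. Specifically, take the defining equation to be $\delta(p) \approx \epsilon(p)$ with $\delta(p) = p$ and $\epsilon(p) = 1$, and take the equivalence system to be $\Delta(p,q) = \{p \to q,\; q \to p\}$. The axioms (IL1)--(IL5) of an implicative logic, together with the defining properties (LALG1), (LALG2) of an $\mathcal{L}$-algebra, are exactly what is required to make these choices work.

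First, I would prove (ALG1). The forward direction is a one-line application of (LALG1): if $\Gamma \vdash_{\mathcal{L}} \phi$ and $h \in \mathrm{Hom}(\mathbf{Fm}_\mathbf{L}, \mathbf{A})$ with $\mathbf{A} \in Alg^*(\mathcal{L})$ maps $\Gamma$ into $\{1\}$, then $h(\phi) = 1$. The harder direction is via the Lindenbaum--Tarski construction. For a fixed $\Gamma$, define a binary relation on $\mathit{Fm}_\mathbf{L}$ by
\[
\phi \equiv_\Gamma \psi \iff \Gamma \vdash_{\mathcal{L}} \phi \to \psi \ \text{ and }\ \Gamma \vdash_{\mathcal{L}} \psi \to \phi.
\]
Reflexivity uses (IL1), symmetry is immediate, transitivity uses (IL2), and compatibility with each connective $f$ of the language uses exactly (IL3); thus $\equiv_\Gamma$ is a congruence on $\mathbf{Fm}_\mathbf{L}$. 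In the quotient $\mathbf{Fm}_\mathbf{L}/\equiv_\Gamma$ take the distinguished element $1$ to be the class of any $\Gamma$-derivable formula (e.g.\ $\alpha \to \alpha$). Using (IL4) and (IL5), one checks (LALG1) for this quotient by induction on the length of a derivation, while (LALG2) is a tautological consequence of the definition of $\equiv_\Gamma$. Now if the semantic consequence on the right of (ALG1) holds, instantiate it with the canonical interpretation $h(\chi) := [\chi]_{\equiv_\Gamma}$; since each $\alpha \in \Gamma$ is sent to $[1]$, we conclude $[\phi] = [1]$, i.e.\ $\Gamma \vdash_{\mathcal{L}} 1 \to \phi$, whence (IL4) applied to $\vdash_{\mathcal{L}} 1$ (by (IL1)) yields $\Gamma \vdash_{\mathcal{L}} \phi$.

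With (ALG1) established, the remaining items are routine syntactic calculations inside $\mathcal{L}$. For (ALG3), $\phi \vdash_{\mathcal{L}} \phi \to 1$ and $\phi \vdash_{\mathcal{L}} 1 \to \phi$ follow from (IL5) (and the fact that $\vdash_{\mathcal{L}} 1$), while $\{\phi \to 1,\; 1 \to \phi\} \vdash_{\mathcal{L}} \phi$ is one application of (IL4) using $\vdash_{\mathcal{L}} 1$. For (ALG2) and (ALG4), one translates equational consequence to consequence in $\mathcal{L}$ through the bijection $\phi \approx \psi \ \leftrightsquigarrow\ \{\phi \to \psi,\; \psi \to \phi\}$, using (ALG1) in both directions, (LALG2) to collapse the pair $\{\epsilon \to \delta = 1,\; \delta \to \epsilon = 1\}$ back to $\epsilon = \delta$, and (ALG3) to move between the forms $\chi$ and $\chi \approx 1$.

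The main obstacle is the Lindenbaum--Tarski step inside the proof of (ALG1): checking that $\equiv_\Gamma$ is a congruence requires (IL3) for every connective in $\mathbf{L}$ (so the argument depends uniformly on the signature), and verifying that the quotient satisfies (LALG1) needs a clean induction over derivations in which axiom instances, substitutions, and the inference rules must each be shown to be sent to $1$ in the quotient. Once this is done cleanly, the rest of the theorem is purely formal manipulation using (IL1)--(IL5) and (LALG1)--(LALG2).
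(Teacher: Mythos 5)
The paper offers no proof of this theorem at all: it is recalled as a classical result of Blok and Pigozzi (with pointers to \cite{BlPi89} and \cite{Fo16}), so there is no ``paper proof'' to compare against. Judged on its own merits, your sketch is the standard Lindenbaum--Tarski argument for implicative logics and is essentially correct: the choice of defining equation $p \approx 1$ and equivalence system $\{p\to q,\ q\to p\}$, the use of (IL1)--(IL3) to make $\equiv_\Gamma$ a congruence, and the derivation of (ALG2)--(ALG4) from (ALG1) by purely syntactic translation are all exactly how the result is proved in the literature. One point deserves more care than your sketch gives it: (LALG1) must be verified for \emph{every} homomorphism $h$ from the formula algebra into the quotient, not just the canonical projection. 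The standard way to handle this is to lift an arbitrary such $h$ to a substitution $\sigma$ on formulas (choosing representatives of the classes $h(p)$ for each variable $p$) so that $h = \pi\circ\sigma$, and then invoke structurality of $\vdash_{\mathcal L}$ to get $\sigma[\Delta]\vdash_{\mathcal L}\sigma(\psi)$ before descending to the quotient; your phrase about an induction in which ``substitutions'' are tracked gestures at this, but the lifting step is the one place where the argument genuinely uses that $\vdash_{\mathcal L}$ is structural rather than only (IL1)--(IL5), and it should be made explicit. With that addition your outline fills in a proof the paper leaves entirely to the references.
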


\begin{Remark}  
(ALG2) and (ALG3) are derivable from (ALG1) and (ALG4) and conversely.
\end{Remark}

The following theorem, which is a restatement of the preceding theorem,  provides a ``translation algorithm'' to go back and forth between an implicative logic $\mathcal{L}$ and 
$Alg^*(\mathcal{L})$.

\begin{Theorem} (BP89), (Fo16) \label{imp-alg}
Every implicative logic  $\mathcal{L}$ is algebraizable (in the sense of Blok and Pigozzi) with respect to the class $Alg^*(\mathcal{L})$ and the algebraizability is witnessed by the set of defining identities 
$E(x) = \{x \approx x \to x\} $ and the set of equivalence formulas  
$\Delta(x, y) =\{x \to y, y \to x\}$. 
\end{Theorem}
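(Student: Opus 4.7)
The plan is to verify, directly from (IL1)--(IL5) and (LALG1)--(LALG2), the four algebraizability conditions (ALG1)--(ALG4) of Theorem \ref{algebraizabilityOfImplicative} using the specified defining identities $E(x) = \{x \approx x \to x\}$ and equivalence formulas $\Delta(x,y) = \{x \to y, y \to x\}$. The cornerstone observation is that every $\mathcal{L}$-algebra $\mathbf{A}$ satisfies $a \to a = 1$ for all $a \in A$: this follows by applying (LALG1) to the theorem $\vdash_{\mathcal L} \alpha \to \alpha$ supplied by (IL1). Consequently, the defining identity $x \approx x \to x$ interprets on $Alg^*(\mathcal{L})$ exactly as $x \approx 1$, so the ``designated value'' role of $1$ on the algebraic side is formally justified even when $1$ is not primitive in the language.

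First I would establish (ALG3), namely $\phi \vdash\dashv_{\mathcal L} \{\phi \to (\phi \to \phi),\ (\phi \to \phi) \to \phi\}$, where the r\^ole of ``$1$'' is played by the tautology $\phi \to \phi$. The forward direction is two applications of (IL5), which yield $\psi \to \phi$ from $\phi$ for any $\psi$; the converse uses (IL1) to produce $\phi \to \phi$ and then modus ponens (IL4) applied to $(\phi \to \phi) \to \phi$ to recover $\phi$. Next I would verify (ALG4): if $\epsilon = \delta$ in an $\mathcal{L}$-algebra then $\epsilon \to \delta = \delta \to \epsilon = a \to a = 1$; conversely, if $\epsilon \to \delta = 1 = \delta \to \epsilon$, then (LALG2) yields $\epsilon = \delta$. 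For (ALG1), the $\Rightarrow$ direction follows at once from (LALG1), which forces any $\mathcal{L}$-derivation to be preserved under interpretations sending $\Gamma$ into $\{1\}$; the $\Leftarrow$ direction is the nontrivial part and is best handled by the standard Lindenbaum--Tarski argument, using the quotient of $\mathbf{Fm}_{\mathbf L}$ by the relation $\phi \sim \psi$ defined by $\vdash_{\mathcal L} \phi \to \psi$ and $\vdash_{\mathcal L} \psi \to \phi$, and checking that this quotient is an $\mathcal{L}$-algebra into which the identity interpretation falsifies $\Gamma \not\vdash_{\mathcal L} \phi$. Then (ALG2) follows by translating back and forth via (ALG3) and (ALG4).

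The main obstacle, as I see it, is showing that $\sim$ above is genuinely a congruence on $\mathbf{Fm}_{\mathbf L}$ and that the resulting quotient lies in $Alg^*(\mathcal{L})$. Reflexivity of $\sim$ comes from (IL1), transitivity from (IL2), and the crucial substitutivity inside arbitrary connectives is exactly the content of (IL3). Once these three properties are packaged together with (LALG1) and (LALG2), the theorem reduces to an application of Theorem \ref{algebraizabilityOfImplicative} with the translations $\tau(\phi) = \{\phi \approx \phi \to \phi\}$ and $\rho(\epsilon \approx \delta) = \{\epsilon \to \delta, \delta \to \epsilon\}$, which are shown above to be mutually inverse modulo $\vdash\dashv_{\mathcal L}$ and $\models_{Alg^*(\mathcal L)}$ respectively.
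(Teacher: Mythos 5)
The paper offers no proof of this theorem: it is presented as a known result, a ``restatement'' of Theorem \ref{algebraizabilityOfImplicative}, with the citations (BP89) and (Fo16) doing all the work. Your proposal therefore goes well beyond what the paper does, by actually deriving the four conditions (ALG1)--(ALG4) from (IL1)--(IL5) and (LALG1)--(LALG2); this is the standard Blok--Pigozzi/Font argument, and your outline of it is sound. The preliminary observation that $a \to a = 1$ holds in every $\mathcal{L}$-algebra (so that $E(x)$ reads as $x \approx 1$), the derivations of (ALG3) from (IL1), (IL4), (IL5), and of (ALG4) from (LALG2), are all correct, and you rightly identify the Lindenbaum--Tarski construction as the only substantial step, with (IL2) and (IL3) supplying transitivity and compatibility of the interderivability relation. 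One imprecision worth flagging: for the completeness direction of (ALG1) you must quotient $\mathbf{Fm}_{\mathbf L}$ by the relation \emph{relative to the theory generated by} $\Gamma$ (i.e.\ $\phi \sim_\Gamma \psi$ iff $\Gamma \vdash_{\mathcal L} \phi \to \psi$ and $\Gamma \vdash_{\mathcal L} \psi \to \phi$), not by the theorem-based relation you wrote down; with the latter, the images of the formulas in $\Gamma$ need not be the designated element $1$, so the resulting interpretation would not witness the failure of $\Gamma \models_{Alg^*(\mathcal L)} \phi \approx 1$. With that adjustment (and a routine check that the quotient satisfies (LALG1) and (LALG2)), your argument is the standard textbook proof of the result the paper cites.
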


{\bf Axiomatic Extensions of Algebraizable logics:} 
  
A logic $\mathcal{L'}$ is an {\bf axiomatic extension} of $\mathcal{L}$ if $\mathcal{L}$ is obtained by adding new axioms but keeping the rules of inference the same as in $\mathcal{L}$. \\	
Let $Ext(\mathcal{L})$ denote the lattice of axiomatic extensions of a logic $\mathcal{L}$ and 
$\mathbf{L_V(\mathbb V)}$ denote the lattice of subvarieties of a variety $\mathbb V$ of algebras. 

The following theorems due to Blok and Pigozzi \cite{BlPi89} are useful here. 
	
\begin{theorem}\cite{BlPi89} ,   \cite[Theorem 3.33] {Fo16} \label{Ext}
Let $\mathcal{L}$ be an algebraizable logic whose equivalent algebraic semantics $\mathbb V$ is a variety.  Then
$Ext(\mathcal{L})$ is dually isomorphic to $\mathbf{L_V(\mathbb V)}$. \\
\end{theorem}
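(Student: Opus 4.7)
\begin{Proof}[Proof proposal for Theorem \ref{Ext}]
The plan is to exhibit a pair of order-reversing maps between $Ext(\mathcal{L})$ and $\mathbf{L_V(\mathbb{V})}$ and show that they are mutually inverse, using the translation algorithm from Theorem \ref{imp-alg} (equivalently, the properties (ALG1)--(ALG4) of Theorem \ref{algebraizabilityOfImplicative}).

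First I would set up the two maps. Given an axiomatic extension $\mathcal{L}'$ of $\mathcal{L}$, presented by $\mathcal{L}$ together with an additional set $\Sigma \subseteq \mbox{\it Fm}_{\mathbf L}$ of axioms, define
\[
F(\mathcal{L}') := \mathbb{V} \cap \mbox{Mod}\bigl(\{\phi \approx 1 : \phi \in \Sigma\}\bigr),
\]
that is, the subvariety of $\mathbb{V}$ obtained by imposing the defining identities of the new axioms. Conversely, given a subvariety $\mathbb{W}$ of $\mathbb{V}$ presented by an additional set $E$ of identities $\alpha \approx \beta$, define $G(\mathbb{W})$ to be the extension of $\mathcal{L}$ obtained by adding, as new axioms, the formulas in $\{\alpha \to \beta,\, \beta \to \alpha : (\alpha \approx \beta) \in E\}$. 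Both assignments are independent of the chosen presentation because, by (ALG1), two axiomatic extensions are equal iff they have the same consequence relation, and by (ALG4), two subvarieties are equal iff they satisfy the same identities.

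Next I would verify that $F$ and $G$ are order-reversing with respect to the natural orderings (extension $\subseteq$ on the logic side, inclusion on the variety side; note the convention: a \emph{stronger} logic has \emph{more} theorems, so corresponds to a \emph{smaller} variety). This is direct: adding more axioms to $\mathcal{L}$ translates into imposing more identities via $F$, shrinking the model class; and enlarging the defining set $E$ for a subvariety adds more translated axioms via $G$, strengthening the logic.

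The heart of the argument, and the place where the main technical obstacle lies, is to show that $G \circ F$ and $F \circ G$ are identities. For $G(F(\mathcal{L}')) = \mathcal{L}'$: starting from axioms $\Sigma$, $F$ produces the identities $\{\phi \approx 1 : \phi \in \Sigma\}$, and $G$ translates these back into axioms $\{\phi \to 1,\, 1 \to \phi : \phi \in \Sigma\}$. By (ALG3), each $\phi$ is interderivable in $\mathcal{L}$ with the pair $\{\phi \to 1, 1 \to \phi\}$, so the resulting extension has exactly the same theorems as $\mathcal{L}'$. For $F(G(\mathbb{W})) = \mathbb{W}$: starting from identities $E$, $G$ yields axioms of the form $\alpha \to \beta$ and $\beta \to \alpha$, and $F$ converts each of these to the identity $(\alpha \to \beta) \approx 1$ and $(\beta \to \alpha) \approx 1$; by (ALG4) this system of identities is equivalent (over $\mathbb{V}$) to $E$, so the same subvariety is recovered. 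The delicate point, and where I would expect to spend the most care, is checking that these equivalences lift from single formulas/identities to arbitrary sets in a way that actually preserves (respectively) derivability and equational consequence, i.e., checking the claim at the level of the consequence relations $\vdash_{\mathcal L}$ and $\models_{\mathbb V}$ rather than just for fixed axioms. This is precisely what (ALG1) and (ALG2) of Theorem \ref{algebraizabilityOfImplicative} supply, and invoking them uniformly completes the bijection.

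Finally, since $F$ and $G$ are mutually inverse and each reverses the order, $Ext(\mathcal{L})$ is dually isomorphic to $\mathbf{L_V(\mathbb{V})}$, as required.
\end{Proof}
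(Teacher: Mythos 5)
The paper does not actually prove this theorem: it is quoted from Blok--Pigozzi \cite{BlPi89} and Font \cite[Theorem 3.33]{Fo16} and used as a black box, so there is no in-paper argument to compare yours against. Judged on its own terms, your proposal is the standard textbook proof (essentially the one in Font's book) and its structure is sound: translate axioms to identities via $\tau(x)=\{x\approx 1\}$, translate identities to axioms via $\Delta(x,y)=\{x\to y,\,y\to x\}$, check both maps reverse order, and use (ALG3)/(ALG4) to see that the round trips do not change the extension or the subvariety.

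Two points deserve more than the gesture you give them. First, you prove $G\circ F=\mathrm{id}$ and $F\circ G=\mathrm{id}$ at the level of a \emph{chosen presentation}, but the real content is the well-definedness of $F$ and $G$: if $\Sigma_1$ and $\Sigma_2$ axiomatize the same extension, you must show $\mathbb V\cap \mathrm{Mod}(\tau[\Sigma_1])=\mathbb V\cap \mathrm{Mod}(\tau[\Sigma_2])$. Your stated justification (``two extensions are equal iff they have the same consequence relation'') merely restates what equality means; the actual argument uses (ALG1) together with substitution-invariance of $\vdash_{\mathcal L}$ and the observation that an algebra satisfies $\tau(\phi)$ \emph{as an identity} iff every valuation satisfies $\tau(\sigma\phi)$ for every substitution $\sigma$ (restrict to the subalgebra generated by the image of a valuation). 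You correctly flag this lifting from single formulas to consequence relations as the delicate step and point at (ALG1)/(ALG2), so I would call this an incomplete step rather than a wrong one; the clean way to package it is the lemma that the equivalent algebraic semantics of $\mathcal L+\Sigma$ is exactly $\mathbb V\cap\mathrm{Mod}(\tau[\Sigma])$, which is what Theorem \ref{Ext-algebraizable} is recording. Second, you have silently specialized to the implicative transformers $\{x\approx 1\}$ and $\{x\to y,\,y\to x\}$; that is harmless in this paper, where every logic in sight is implicative, but the theorem as stated is for arbitrary algebraizable logics and needs the general transformers $\tau$, $\rho$ --- the argument is verbatim the same, so this is a matter of phrasing rather than substance.
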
 

\begin{theorem} (Blok and Pigozzi 1989) \label{Ext-algebraizable} 
Let $\mathcal{L}$ be an algebraizable logic and $\mathcal{L'} $  be an axiomatic extension of $\mathbf{\mathit{L}}$.  Then  $\mathcal{L'}$ is also algebraizable with the same transformers $\tau$ and $\rho$ of $\mathcal{L}$.
\end{theorem}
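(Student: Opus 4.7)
The plan is to produce an equivalent algebraic semantics for $\mathcal{L'}$ using the very same transformers $\tau(p) = \{\delta_i(p) \approx \epsilon_i(p) : i \leq n\}$ and $\rho(p,q) = \{\Delta_j(p,q) : j \leq m\}$ that witness the algebraizability of $\mathcal{L}$. Let $\Sigma$ denote the set of new axioms added to $\mathcal{L}$ to form $\mathcal{L'}$, and define
\[
\mathbb{V'} := Alg^*(\mathcal{L}) \cap Mod\bigl(\tau(\Sigma)\bigr),
\]
where $\tau(\Sigma) := \{\delta_i(\psi) \approx \epsilon_i(\psi) : i \leq n, \, \psi \in \Sigma\}$. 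I claim $\mathbb{V'}$ is an equivalent algebraic semantics for $\mathcal{L'}$ via $\tau,\rho$; since $\mathbb{V'}$ is defined by identities relative to the quasivariety $Alg^*(\mathcal{L})$, this also captures $Alg^*(\mathcal{L'})$.

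First I would check the ``pointwise'' properties (ALG3) and (ALG4) of Theorem \ref{imp-alg} for $\mathcal{L'}$ and $\mathbb{V'}$. Condition (ALG3), namely $\phi \dashv\vdash_{\mathcal{L'}} \rho(\tau(\phi))$, holds because the corresponding derivations already exist in $\mathcal{L}$ and $\vdash_{\mathcal{L}} \subseteq \vdash_{\mathcal{L'}}$ (axiomatic extensions preserve all derivations while possibly producing more). Condition (ALG4), asserting $\epsilon \approx \delta \models_{\mathbb{V'}} \tau(\rho(\epsilon,\delta))$ and conversely, follows from the corresponding equivalence in $Alg^*(\mathcal{L})$ together with the inclusion $\mathbb{V'} \subseteq Alg^*(\mathcal{L})$.

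The substantive step is (ALG1): showing $\Gamma \vdash_{\mathcal{L'}} \phi$ iff $\tau(\Gamma) \models_{\mathbb{V'}} \tau(\phi)$. I would exploit the standard characterization of axiomatic extensions,
\[
\Gamma \vdash_{\mathcal{L'}} \phi \ \Longleftrightarrow\ \Gamma \cup \Sigma^{\mathrm{sub}} \vdash_{\mathcal{L}} \phi,
\]
where $\Sigma^{\mathrm{sub}}$ is the closure of $\Sigma$ under substitutions (allowable because the rules of $\mathcal{L'}$ coincide with those of $\mathcal{L}$). Applying (ALG1) for $\mathcal{L}$ yields
\[
\tau(\Gamma) \cup \tau(\Sigma^{\mathrm{sub}}) \models_{Alg^*(\mathcal{L})} \tau(\phi).
\]
Since every algebra satisfies an identity iff it satisfies all its substitution instances, $Mod(\tau(\Sigma)) = Mod(\tau(\Sigma^{\mathrm{sub}}))$, and thus the displayed semantic consequence is equivalent to $\tau(\Gamma) \models_{\mathbb{V'}} \tau(\phi)$. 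This chain of equivalences establishes (ALG1).

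The main obstacle is the bookkeeping around substitution: one must verify that $\tau$ commutes with substitutions in the precise sense that $\tau(\sigma\psi) = \sigma\tau(\psi)$ (applied coordinate-wise on identities), and that the defining identities $\tau(\psi)$ for the new axioms carve out exactly the intended subvariety of $Alg^*(\mathcal{L})$. Both facts are routine because $\tau$ is built from a fixed finite set of term-schemes in a single variable, but they are the technical point on which the whole argument turns. Once (ALG1), (ALG3), (ALG4) are in hand, algebraizability of $\mathcal{L'}$ with the transformers $\tau,\rho$ follows immediately, as (ALG2) is derivable from the others per the Remark preceding the theorem.
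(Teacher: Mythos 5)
The paper does not prove this statement; it is imported verbatim from Blok and Pigozzi (1989) (see also Font's textbook), so there is no in-paper argument to compare against. Your proposal follows the standard route for this result: take $\mathbb{V'} = Alg^*(\mathcal{L}) \cap Mod(\tau(\Sigma))$, verify (ALG3) and (ALG4) by monotonicity ($\vdash_{\mathcal{L}}\subseteq\vdash_{\mathcal{L'}}$ and $\mathbb{V'}\subseteq Alg^*(\mathcal{L})$), and reduce (ALG1) for $\mathcal{L'}$ to (ALG1) for $\mathcal{L}$ via the characterization $\Gamma\vdash_{\mathcal{L'}}\phi \Leftrightarrow \Gamma\cup\Sigma^{\mathrm{sub}}\vdash_{\mathcal{L}}\phi$. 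All of that is correct and is exactly how the original proof goes.

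There is, however, one genuine gap in the step you yourself flag as the crux. You assert that $\tau(\Gamma)\cup\tau(\Sigma^{\mathrm{sub}})\models_{Alg^*(\mathcal{L})}\tau(\phi)$ is equivalent to $\tau(\Gamma)\models_{\mathbb{V'}}\tau(\phi)$ ``since $Mod(\tau(\Sigma))=Mod(\tau(\Sigma^{\mathrm{sub}}))$.'' That identity of model classes settles only one direction. For the other direction you must take an arbitrary $\mathbf{A}\in Alg^*(\mathcal{L})$ and a \emph{single} valuation $h$ satisfying $\tau(\Gamma)\cup\tau(\Sigma^{\mathrm{sub}})$ and conclude $h$ satisfies $\tau(\phi)$; but such an $h$ does not force $\mathbf{A}\models\tau(\Sigma)$ as identities, so $\mathbf{A}$ need not lie in $\mathbb{V'}$ and the hypothesis $\tau(\Gamma)\models_{\mathbb{V'}}\tau(\phi)$ cannot be applied to $\mathbf{A}$ directly. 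The standard repair is to pass to the subalgebra $\mathbf{B}=h[\mathbf{Fm}_{\mathbf L}]$ generated by the image of $h$: every valuation into $\mathbf{B}$ has the form $h\circ\sigma$ for some substitution $\sigma$, so $h$ satisfying all of $\tau(\Sigma^{\mathrm{sub}})$ is exactly the statement $\mathbf{B}\models\tau(\Sigma)$; since $Alg^*(\mathcal{L})$ is a quasivariety and hence closed under subalgebras, $\mathbf{B}\in\mathbb{V'}$, and applying the hypothesis to $h$ viewed as a valuation into $\mathbf{B}$ gives $h\models\tau(\phi)$. With that insertion (and the routine commutation $\tau(\sigma\psi)=\sigma\tau(\psi)$, which you correctly identify), your argument is complete.
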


\section{A logic for the variety $\mathbb{RUNO}1$}

In 2022, \cite{CoSa22a} introduced, among others, the logic $\mathcal{DMSH}$ which arose in an attempt to ``logicize'' the variety of De Morgan semi-Heyting algebras. 
This logic will play a crucial role in 
the rest of this paper.  We will now recall this logic from \cite{CoSa22a}.\\

\subsection{{\bf De Morgan Semi-Heyting Logic} }

\

\medskip

\noindent {\bf The logic $\mathcal{DMSH}$}:\\

\smallskip
{\bf \noindent LANGUAGE}: $\langle  \lor,  \wedge,  \to,  \sim, \bot, \top \rangle$\\

Let $\alpha\to_H \beta$ := $\alpha\to(\alpha\land \beta)$ 
and 
 $\alpha \leftrightarrow_H \beta := (\alpha \to _H \beta) \land (\beta \to_H  \alpha)$.\\

{\bf \noindent AXIOMS:}

{\bf \noindent (a) SEMI-HEYTING LOGIC}: 
\begin{itemize}
\item[1] \noindent \rm $\alpha \to_H (\alpha \vee \beta)$, \label{axioma_supremo_izq}  

\item[2] \noindent $\beta \to_H (\alpha \vee \beta)$, \label{axioma_supremo_der}

\item[3] \noindent $(\alpha \to_H  \gamma) \to_H [(\beta \to_H  \gamma) \to_H (\alpha \vee \beta) \to_H \gamma)]$, \label{axioma_supremo_cota_inferior}

\item[4] \noindent $(\alpha \wedge \beta) \to_H {\alpha}$, \label{axioma_infimo_izq}

\item[5] \noindent $(\gamma \to_H  \alpha) \to_H  [(\gamma \to_H  \beta) \to_H  (\gamma \to_H  (\alpha \wedge \beta))]$, \label{axioma_infimo_cota_superior} 

\item[6] \noindent $\top$, \label{axioma_top}

\item[7] \noindent $\bot \to_H \alpha$, \label{axioma_bot}

\item[8] \noindent $[(\alpha \land \beta) \to_H  \gamma] \to_H  [\alpha \to_H  (\beta \to_H  \gamma)]$, \label{axioma_condicRes_InfAImplic}

\item[9] \noindent $[\alpha \to_H  (\beta \to_H  \gamma )] \to_H  [(\alpha \wedge \beta) \to_H  \gamma]$, \label{axioma_condicRes_ImplicAInf}

\item[10] \noindent $(\alpha \to_H  \beta) \to_H  [(\beta \to_H  \alpha) \to_H ((\alpha \to \gamma) \to_H  (\beta \to \gamma))]$, \label{axioma_BuenaDefImplic2} \label{axioma_ImplicADerecha}

\item[11] \noindent $(\alpha \to_H \beta) \to_H  [(\beta \to_H  \alpha) \to_H ((\gamma \to \beta) \to_H  (\gamma \to \alpha))]$. \label{axioma_BuenaDefImplic1} \label{axioma_ImplicAIzquierda}  
\end{itemize}

{\bf (b) De Morgan Axioms}
\begin{itemize}
\item[12]  $\top \leftrightarrow_H  \ \sim \bot$, \label{axiom_bot_neg}  

\item[13]  $\sim (\alpha \wedge \beta) \leftrightarrow_H  \ (\sim \alpha \ \vee \ \sim \beta)$. \label{axiom_DM_law1}  

\item[14] {$\sim(\alpha \vee \beta) \ \leftrightarrow_H  \ (\sim\alpha \ \land \ \sim\beta)$,} \label{axiom_DSDSH_1}
		
\item[15]{$(\sim\sim\alpha) \ \leftrightarrow_H  \ \alpha$.} 
\end{itemize}

\medskip
 {\bf \noindent RULES OF INFERENCE:}\\

\medskip 
	 
	       {\bf (SMP)}  \quad From $\phi$ and $\phi \to_H  {\gamma}$, deduce $\gamma$ \quad ({\rm semi-Modus Ponens}),\\
	       
\medskip 
              
          {\bf (SCP)} \quad From  $\phi \to_H  \gamma$, deduce $\sim \gamma \ \to_H \  \sim \phi$ \quad \rm(semi-Contraposition rule\rm).\\

\medskip 
The following theorem was proved in \cite{CoSa22a}.

\begin{theorem} \label{teorema_DMSH_implicativa} \label{teorema_DHMSH_implicativa}
	The logic $\mathcal{DMSH}$ is an implicative logic with respect to the connective $\to_H$.
\end{theorem}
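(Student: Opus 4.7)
The plan is to verify the five defining conditions (IL1)--(IL5) of Rasiowa's notion of implicative logic, with the primitive $\to$ replaced by the defined $\to_H$. Condition (IL4) is modus ponens for $\to_H$, which is precisely the primitive rule (SMP), so it comes for free. Conditions (IL1), (IL2), (IL5) will be proved first, and then (IL3) will follow with their help.

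The key observation for (IL1), (IL2) and (IL5) is that axioms 1--11 form, essentially, a Hilbert-style axiomatization of positive intuitionistic propositional logic recast with the derived connective $\to_H$. Axioms 8 and 9 encode a deduction-theorem bridge between $(\alpha \land \beta) \to_H \gamma$ and $\alpha \to_H (\beta \to_H \gamma)$. Combined with the left-projection axiom 4 via (SMP) they yield the weakening schema $\vdash \alpha \to_H (\beta \to_H \alpha)$; a standard S/K-combinator-style argument then produces the reflexivity $\vdash \alpha \to_H \alpha$ of (IL1); and transitivity (IL2) drops out by a further application of axiom 8 with (SMP). Weakening, together with (SMP), immediately yields (IL5).

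For (IL3), I would treat the four primitive connectives $\sim$, $\land$, $\lor$, $\to$ separately. The case of $\sim$ is an immediate double application of the rule (SCP). For $\land$, I would first derive the dual projection $(\alpha \land \beta) \to_H \beta$ from axiom 4 using axioms 8, 9 and reflexivity, and then combine both projections with transitivity and axiom 5 to obtain the congruence. The case of $\lor$ is dual, using axioms 1, 2, 3. The congruence clause for the primitive $\to$ is exactly what axioms 10 and 11 were designed to supply: from $\alpha_i \leftrightarrow_H \beta_i$ they yield $(\alpha_1 \to \gamma) \to_H (\beta_1 \to \gamma)$ and $(\gamma \to \alpha_2) \to_H (\gamma \to \beta_2)$, and a single use of transitivity glues them into $(\alpha_1 \to \alpha_2) \to_H (\beta_1 \to \beta_2)$.

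The most delicate step is the preliminary derivation of reflexivity and transitivity of $\to_H$ from axioms 4, 5, 8, 9 alone. Although this is a well-rehearsed calculation in positive intuitionistic logic, the ordering matters here: several clauses of (IL3) rest on transitivity, so the derivations must be sequenced carefully to avoid a circular appeal.
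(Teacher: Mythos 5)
First, note that the paper does not actually prove this theorem: it is imported from \cite{CoSa22a} (``The following theorem was proved in \cite{CoSa22a}''), where the verification occupies a long sequence of derived rules and auxiliary lemmas. So there is no in-paper argument to compare against, and your proposal has to stand on its own. Its overall architecture --- checking Rasiowa's conditions (IL1)--(IL5) one by one, observing that (IL4) is literally the rule (SMP), extracting the weakening schema $\alpha \to_H (\beta \to_H \alpha)$ from axiom 4 via axiom 8 and (SMP), deducing (IL5) from weakening, and handling (IL3) connective by connective with (SCP) for $\sim$, axioms 1--5 for the lattice connectives, and axioms 10--11 for the primitive $\to$ --- is the right one and matches what is done in the cited source.

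The genuine gap is exactly at the step you yourself flag as delicate: the derivation of (IL1) and (IL2). You describe it as ``a standard S/K-combinator-style argument'' and ``a well-rehearsed calculation in positive intuitionistic logic,'' but this axiom system contains no analogue of the S schema $(\alpha \to_H (\beta \to_H \gamma)) \to_H ((\alpha \to_H \beta) \to_H (\alpha \to_H \gamma))$, and with only K, the adjunction axioms 4 and 5, and the import/export axioms 8 and 9, the obvious attempts run in circles. For instance, axioms 4 and 8 immediately give $\phi \to_H (\phi \to_H \phi)$, but contracting this via axiom 9 only returns $(\phi \land \phi) \to_H \phi$, and passing from there to $\phi \to_H \phi$ requires $\phi \to_H (\phi \land \phi)$, which by axiom 5 requires $\phi \to_H \phi$ --- the very thing being proved. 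Similarly, your claim that transitivity ``drops out by a further application of axiom 8 with (SMP)'' does not go through: uncurrying $\alpha \to_H (\beta \to_H \gamma)$ to $(\alpha \land \beta) \to_H \gamma$ leaves you needing $\alpha \to_H (\alpha \land \beta)$ (axiom 5 again, hence reflexivity) and then a further instance of transitivity to chain the pieces. The actual derivations must exploit that $\to_H$ is a \emph{defined} connective, $\alpha \to_H \beta := \alpha \to (\alpha \land \beta)$, together with axioms 10--11 governing the primitive $\to$, and they constitute the technical heart of the theorem; as written, your proposal asserts precisely the part that needs proof.
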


From Theorem \ref{imp-alg} and Theorem \ref{teorema_DMSH_implicativa},
we have the following corollary which was already mentined in \cite{CoSa22a}.

\begin{Corollary} \label{CorExt} 
 The logic $\mathcal{DMSH}$ is algebraizable and  the variety  
 $\mathbb{DMSH}$ is the equivalent algebraic semantics for $\mathcal{DMSH}$ with the the two transformers $\tau$ and $\rho$ given by  
$\tau(x) =E(x) = \{x \approx x \to_H x\} = \{x \approx 1\}$ and   
$\rho(x, y)=\Delta(x, y) =\{x \to_H y, y \to_H x\} = \{x \leftrightarrow_H y \}$. \\
 \end{Corollary}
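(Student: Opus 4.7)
The plan is to chain together the two preceding theorems and then transcribe the transformers into the stated form. First, I would invoke Theorem~\ref{teorema_DMSH_implicativa}, which already establishes that $\mathcal{DMSH}$ is implicative with respect to $\to_H$. Then Theorem~\ref{imp-alg} immediately yields that $\mathcal{DMSH}$ is algebraizable in the sense of Blok and Pigozzi with equivalent algebraic semantics $Alg^*(\mathcal{DMSH})$, and that the algebraizability is witnessed by the defining identities $E(x) = \{x \approx x \to_H x\}$ and the equivalence formulas $\Delta(x,y) = \{x \to_H y,\, y \to_H x\}$.

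Next, two simplifications put the transformers into the final stated form. For $\tau$, I would unfold $x \to_H x = x \to (x \land x) = x \to x$ and apply the semi-Heyting axiom (SH4), obtaining $x \to x \approx 1$; this collapses the identity $x \approx x \to_H x$ to $x \approx 1$. For $\rho$, the set $\{x \to_H y,\, y \to_H x\}$ is, by the definition of $\leftrightarrow_H$ introduced at the top of the subsection, simply another way of writing the singleton $\{x \leftrightarrow_H y\}$.

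The only remaining step is to identify $Alg^*(\mathcal{DMSH})$ with the variety $\mathbb{DMSH}$. The inclusion $\mathbb{DMSH} \subseteq Alg^*(\mathcal{DMSH})$ I would establish by checking that each axiom of $\mathcal{DMSH}$ evaluates to $1$ in every $\mathbf{A} \in \mathbb{DMSH}$: each axiom has the form $\phi \to_H \psi$ (or a conjunction of such), so by the semi-Heyting lemma from Section~2, namely $a \to_H b = 1$ iff $a \leq b$, the claim reduces to a lattice inequality that follows from the semi-Heyting and De Morgan identities; the rules (SMP) and (SCP) plainly preserve the value $1$; and condition (LALG2) is exactly antisymmetry of $\leq$, again via the same lemma. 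For the reverse inclusion, each defining identity of $\mathbb{DMSH}$ must be produced as a derivable biconditional $\leftrightarrow_H$ in $\mathcal{DMSH}$, which is a routine axiom-by-axiom verification using the eleven semi-Heyting axiom schemes and the four De Morgan axiom schemes.

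The main (though mild) obstacle is this last identification, as it requires tediously matching the Hilbert-style axioms to the equational defining identities in both directions. However, this correspondence was already worked out explicitly in \cite{CoSa22a}, so I would finish by appealing to that paper for the equality $Alg^*(\mathcal{DMSH}) = \mathbb{DMSH}$, completing the proof.
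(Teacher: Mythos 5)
Your proposal is correct and follows essentially the same route as the paper, which derives the corollary directly from Theorem~\ref{teorema_DMSH_implicativa} (implicativity of $\mathcal{DMSH}$) and Theorem~\ref{imp-alg} (algebraizability of implicative logics), with the identification $Alg^*(\mathcal{DMSH}) = \mathbb{DMSH}$ deferred to \cite{CoSa22a}. Your additional detail on simplifying the transformers and sketching the two inclusions is a harmless elaboration of what the paper treats as immediate.
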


\subsection{{\bf The logic $\mathcal{RUNO}1$}}

\

\medskip
The logic $\mathcal{RUNO}1$ will be defined as an axiomatic extension of the logic $\mathcal{DMSH},$ obtained by adding the following additional axioms:

 Define $\neg$ by: $\neg \alpha := \alpha \to 0$. \\   

{\bf (Additional) AXIOMS:}
 
\begin{itemize}

\item[16]  $\sim (0 \to 1) \leftrightarrow_H  (0 \to 1)$ (Unorthodoxy),  

\item[17] $(\alpha \ \wedge \sim\neg\sim\alpha) \vee (\beta \vee \neg \beta) \leftrightarrow_H  (\beta \vee \neg \beta)$  \quad (Regularity),

\item[18] $\neg \sim (\alpha \land \neg \sim \alpha)  \leftrightarrow_H  \alpha \land \neg \sim \alpha$ \quad (Level 1).
\end{itemize}

Since we know the logic $\mathcal{RUNO}1$ is an axiomatic extension of $\mathcal{DMSH}$,
we have the following consequence of Corollary  \ref{CorExt}.

\begin{Corollary} \label{CorExt1} 
 The logic $\mathcal{RUNO}1$ is algebraizable, and  the variety  
 $\mathbb{RUNO}1$ is the equivalent algebraic semantics for $\mathcal{RUNO}1$, with the $E(x)$ and 
 $\Delta(x, y)$ given by 
$E(x) := \{x \approx x \to_H x\} = \{x \approx 1\}$ and   
$\Delta(x, y) :=\{x \to_H y, y \to_H x\} = \{x \leftrightarrow_H y \}$. 
 \end{Corollary}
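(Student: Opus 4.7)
The plan is to obtain the corollary as a direct instance of Theorem \ref{Ext-algebraizable}, applied to Corollary \ref{CorExt}. By construction, $\mathcal{RUNO}1$ is defined in exactly the same language $\mathbf{L}$ as $\mathcal{DMSH}$, shares its rules of inference (SMP) and (SCP), and simply adjoins the three new axiom schemes (16)--(18). So $\mathcal{RUNO}1$ is, by definition, an axiomatic extension of $\mathcal{DMSH}$. Corollary \ref{CorExt} states that $\mathcal{DMSH}$ is algebraizable with equivalent algebraic semantics $\mathbb{DMSH}$, using the transformers $E(x)=\{x\approx 1\}$ and $\Delta(x,y)=\{x\leftrightarrow_H y\}$. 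Applying Theorem \ref{Ext-algebraizable} then gives algebraizability of $\mathcal{RUNO}1$ with \emph{the same} $E$ and $\Delta$, and identifies its equivalent algebraic semantics as the subvariety $\mathbb{V}$ of $\mathbb{DMSH}$ axiomatized (relative to $\mathbb{DMSH}$) by the identities $\tau(\chi) = \{\chi \approx 1\}$, one for each new axiom $\chi \in \{(16),(17),(18)\}$.

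The only substantive task is to show that $\mathbb{V}$ is literally $\mathbb{RUNO}1$, i.e.\ that the three translated identities coincide, modulo $\mathbb{DMSH}$, with the defining axioms of unorthodoxy, regularity (R), and level~1 from Section~3. To do this, I will use two small translation facts. First, by the Blok--Pigozzi framework (property (ALG4) specialized to our $E$ and $\Delta$) together with Lemma~2.3, $\varphi \leftrightarrow_H \psi \approx 1$ and $\varphi \approx \psi$ are mutually derivable modulo $\mathbb{DMSH}$, so each axiom with a top-level $\leftrightarrow_H$ translates to a plain equation. Second, in any semi-Heyting algebra the term $\alpha \to 0$ returns the pseudocomplement $\alpha^{*}$: by (SH2) we have $x\wedge(x\to 0)=x\wedge 0 = 0$, and by Lemma~2.3, $a\wedge x =0$ iff $a \leq x\to(x\wedge 0) = x\to 0$. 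Hence the logical $\neg\alpha:=\alpha\to 0$ corresponds algebraically to $\alpha^{*}$, and $\sim$ corresponds to $\,'\,$.

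With these in hand the translation is mechanical: axiom~(16) becomes $(0\to 1)' \approx 0\to 1$, which is exactly the unorthodoxy axiom defining $\mathbb{UNO}$; axiom~(17) becomes $(x\wedge x'^{*}{'})\vee(y\vee y^{*})\approx y\vee y^{*}$, equivalent to $x\wedge x'^{*}{'} \leq y\vee y^{*}$, which is identity~(R) of regularity; and axiom~(18) becomes $(x\wedge x'^{*})'^{*}\approx x\wedge x'^{*}$, which is the level~1 identity. Conversely, each of these equations, once rewritten as a $\leftrightarrow_H$-equation set equal to $1$, yields the corresponding new axiom. Therefore $\mathbb{V} = \mathbb{RUNO}1$, and the stated $E$, $\Delta$ witness algebraizability.

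The main obstacle, such as it is, lies not in the application of Theorem \ref{Ext-algebraizable} (which is black-box) but in the bookkeeping of the last paragraph: one must be careful that $\neg\sim\alpha$ in the logical axioms is parsed as $\alpha'^{*}$ (and $\sim\neg\sim\alpha$ as $\alpha'^{*}{'}$) so that axioms (17) and (18) match the algebraic identities syntactically, and that $\neg\alpha=\alpha\to 0$ really is the pseudocomplement and not merely a term below it. Both points are handled by the two translation facts above.
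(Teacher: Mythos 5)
Your proposal is correct and follows essentially the same route as the paper, which simply cites that $\mathcal{RUNO}1$ is an axiomatic extension of $\mathcal{DMSH}$ and invokes Corollary \ref{CorExt} (together with Theorem \ref{Ext-algebraizable}). You additionally carry out the translation check that axioms (16)--(18) correspond to the unorthodoxy, regularity, and level-1 identities --- a verification the paper leaves implicit --- and that check is accurate (aside from a harmless misnumbering of the adjunction lemma, which is Lemma 2.1).
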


  We have the following important consequence of Corollary \ref{CorExt1}, Theorem \ref{Ext}  and  Theorem \ref{Ext-algebraizable}.

\begin{Theorem}  
\label{Ext_subvariety_RUNO1} 
\begin{thlist}
\item The lattice $Ext(\mathcal{RUNO}1)$ of axiomatic extensions of the logic $\mathcal{RUNO}1$ is dually isomorphic to the lattice 
$\mathbf{L_V(\mathbb{RUNO}1)}$ of subvarieties of the variety $\mathbb{RUNO}1$.

\item 
Every axiomatic extension of $\mathcal{RUNO}1$ is also algebraizable with its corresponding variety as its equivalent algebraic semantics.
\end{thlist}
\end{Theorem}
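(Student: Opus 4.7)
The plan is to derive both statements as straightforward consequences of the three results cited just before the theorem, doing essentially no new work and instead assembling the pieces carefully.

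For part (1), I would start from Corollary \ref{CorExt1}, which tells us that $\mathcal{RUNO}1$ is algebraizable and that its equivalent algebraic semantics is precisely the variety $\mathbb{RUNO}1$. Since $\mathbb{RUNO}1$ is a variety (not merely a quasi-variety or generalized quasi-variety), the hypotheses of Theorem \ref{Ext} are met verbatim with $\mathcal{L}=\mathcal{RUNO}1$ and $\mathbb{V}=\mathbb{RUNO}1$. Applying that theorem then delivers the dual isomorphism between $Ext(\mathcal{RUNO}1)$ and $\mathbf{L_V(\mathbb{RUNO}1)}$, which is exactly the content of (1). I would just note explicitly that the dual isomorphism sends an axiomatic extension $\mathcal{L}'\supseteq \mathcal{RUNO}1$ to the subvariety of $\mathbb{RUNO}1$ cut out by the identities obtained from the new axioms via the translation $\tau(x)=\{x\approx 1\}$, and conversely sends a subvariety $\mathbb{V}\subseteq\mathbb{RUNO}1$ to the extension obtained by adding the axioms $\{\alpha\leftrightarrow_H\beta:\alpha\approx\beta \text{ holds in }\mathbb{V}\}$ via $\rho$.

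For part (2), I would simply invoke Theorem \ref{Ext-algebraizable}: since $\mathcal{RUNO}1$ is algebraizable (by Corollary \ref{CorExt1}) with transformers $\tau(x)=\{x\approx 1\}$ and $\rho(x,y)=\{x\leftrightarrow_H y\}$, every axiomatic extension $\mathcal{L}'$ of $\mathcal{RUNO}1$ is also algebraizable with the same $\tau$ and $\rho$. It then remains to identify the equivalent algebraic semantics of $\mathcal{L}'$: by the dual isomorphism from part (1), $\mathcal{L}'$ corresponds to a unique subvariety $\mathbb{V}_{\mathcal{L}'}$ of $\mathbb{RUNO}1$, and unpacking the definitions shows that $\mathbb{V}_{\mathcal{L}'}$ is indeed the equivalent algebraic semantics of $\mathcal{L}'$.

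There is no real obstacle here, since all the heavy lifting has already been done in the three preceding results; the only point one needs to be attentive to is the verification that $\mathbb{RUNO}1$ is a variety rather than merely a quasi-variety, so that Theorem \ref{Ext} applies cleanly. That was established in Section 3 by the equational definition of $\mathbb{RUNO}1$, so the argument amounts to a clean citation-chain rather than any new calculation.
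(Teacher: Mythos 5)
Your proposal is correct and follows exactly the route the paper intends: the paper gives no separate proof but presents the theorem as an immediate consequence of Corollary \ref{CorExt1}, Theorem \ref{Ext}, and Theorem \ref{Ext-algebraizable}, which is precisely the citation chain you assemble. Your added remarks spelling out the dual isomorphism and the transformers are a harmless (and helpful) elaboration of the same argument.
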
  
The above theorem justifies the use of the phrase ``the logic corresponding to a subvariety $\mathbb  V$ of $\mathbb{SH}$. 

\smallskip

Let $\mathbf{Mod}(\mathcal E) := \{\mathbf{A} \in \mathbb{SH}: \mathbf{A} \models \delta \approx 1, \mbox{ for every } \delta \in \mathcal E\} $.  

\smallskip
The following theorem is an immediate consequence of Corollary \ref{CorExt1} and plays an important role in the next section.

 \begin{Theorem} \label{completeness_SH_extension} \label{teo_040417_01}
 	Let $\mathcal{E}$ be an axiomatic extension of the logic $\mathcal{SH}$.  Then          
	
   {\rm (a)}  $\mathcal{E}$ is also algebraizable with the same equivalence formulas and defining equations as those of the logic $\mathcal{SH}$.

   {\rm (b)}  $\mathbf{Mod}(\mathcal E)$ is an equivalent algebraic semantics of $\mathcal E$.   
 \end{Theorem}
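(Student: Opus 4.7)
The plan is to derive both parts directly from the Blok--Pigozzi machinery already established for the base logic, combined with the lattice correspondence between axiomatic extensions and subvarieties.

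For part (a), the base logic $\mathcal{SH}$ (read here in the sense of $\mathcal{DMSH}$ as set up in the paper) has been shown to be implicative with respect to $\to_H$ (Theorem \ref{teorema_DMSH_implicativa}) and hence algebraizable (Corollary \ref{CorExt}), with defining equations $E(x) = \{x \approx x \to_H x\} = \{x \approx 1\}$ and equivalence formulas $\Delta(x,y) = \{x \leftrightarrow_H y\}$. Since $\mathcal{E}$ is an axiomatic extension, it is obtained by adjoining new axioms while keeping the rules of inference of $\mathcal{SH}$ intact. Theorem \ref{Ext-algebraizable} of Blok and Pigozzi applies verbatim in this situation and yields that $\mathcal{E}$ is algebraizable with exactly the same transformers $\tau(x) = \{x \approx 1\}$ and $\rho(x,y) = \{x \leftrightarrow_H y\}$, proving (a).

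For part (b), the task is to identify the equivalent algebraic semantics of $\mathcal{E}$ with the concrete class $\mathbf{Mod}(\mathcal{E}) = \{\mathbf{A} \in \mathbb{SH} : \mathbf{A} \models \delta \approx 1 \text{ for every } \delta \in \mathcal{E}\}$. Here the plan is to use the translation algorithm from Theorem \ref{imp-alg} together with Theorem \ref{Ext}. Because the defining equation is $x \approx 1$, each new axiom $\delta$ of $\mathcal{E}$ translates, via $\tau$, into the single identity $\delta \approx 1$. Hence, under the dual isomorphism between $Ext(\mathcal{SH})$ and $\mathbf{L_V}(\mathbb{SH})$ supplied by Theorem \ref{Ext}, the subvariety of $\mathbb{SH}$ corresponding to $\mathcal{E}$ is cut out precisely by the identities $\{\delta \approx 1 : \delta \in \mathcal{E}\}$, which is exactly the class $\mathbf{Mod}(\mathcal{E})$. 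Combining this identification with part (a) gives (b).

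The main obstacle is not technical but bookkeeping: one has to check that the lattice anti-isomorphism of Theorem \ref{Ext} actually sends the extension obtained by adjoining the axiom schema $\delta$ to the subvariety carved out by the equation $\delta \approx 1$, rather than by some more complicated set of equations derived from $\tau \circ \rho$. This is immediate here because $\tau$ is a singleton whose right-hand side is the constant $1$, so $\tau(\delta) = \{\delta \approx 1\}$, with no expansion. Everything else is a direct citation of results already available in the excerpt.
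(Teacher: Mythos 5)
Your proposal is correct and follows essentially the same route as the paper, which dispatches this theorem as an immediate consequence of the algebraizability of the base logic (Corollary \ref{CorExt}/\ref{CorExt1}) together with the Blok--Pigozzi result that axiomatic extensions inherit algebraizability with the same transformers (Theorem \ref{Ext-algebraizable}). You merely make explicit the bookkeeping for part (b) --- that $\tau(\delta)=\{\delta\approx 1\}$ identifies the corresponding subvariety with $\mathbf{Mod}(\mathcal{E})$ --- which the paper leaves implicit.
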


\medskip
\section{Unorthodox Logics: Axiomatic Extensions of the logic $\mathcal{RUNO}1$}
\smallskip

Let us call the axiomatic extensions of the logic $\mathcal{RUNO}1$ as ``unorthodox logics''.
In this section, we provide axiomatizations (i.e., bases) for these unorthodox logics.  
The presentation of the bases is done according to the levels mentioned in Corollary \ref{CorLat} of the lattice of subvarieties of $\mathbb {RUNO}1$.

In the following theorem, we let $\alpha :=\bot \to \top$, $\beta :=\bot \to \alpha$ and $\gamma := \bot \to \beta$ 
\begin{Theorem}
\begin{thlist}
\item[1] {\bf BASES FOR LOGICS ASSOCIATED WITH SINGLE ALGEBRAS}  
\begin{thlist}
\item[i] {\bf A BASE FOR the logic $\mathcal{A}1$ of $\mathbb{V}(\mathbf{A1})$: }

 \qquad \quad (a)  $\alpha \to \top$,   
 
 \qquad \quad (b)  $ \beta$.

\item[ii] {\bf A BASE FOR the logic $\mathcal{A}2$ of $\mathbb{V}(\mathbf{A2})$: }

\qquad \quad (a) $\beta$  

\qquad \quad (b) $\alpha \to \top \leftrightarrow_H  \alpha.$

\qquad \quad (c) $\neg \alpha \leftrightarrow_H  \bot$\\

\item[iii] {\bf A BASE FOR the logic $\mathcal{A}3$ of $\mathbb{V}(\mathbf{A3})$: }

 \qquad \quad (a) $\phi \to (\phi \to (\phi \to \psi)) \leftrightarrow_H  \phi \to (\phi \to \psi)$.\\

\item[iv] {\bf A BASE FOR the logic $\mathcal{A}4$ of $\mathbb{V}(\mathbf{A4})$: }

 \qquad \quad $\sim (\alpha \to \top) \  \leftrightarrow_H  \ \beta$.\\

\item[v] {\bf A BASE FOR the logic $\mathcal{A}5$ of $\mathbb{V}(\mathbf{A5})$: }

\qquad \quad $\neg \alpha \to \top$.  
\end{thlist}

\medskip

\item[2] {\bf BASES FOR LOGICS OF PAIRS OF ALGEBRAS}      

\smallskip
\begin{thlist}
\item[i] {\bf A BASE FOR the logic of $\mathbb{V}(\{\mathbf A1, \mathbf A2\})$ :}

\qquad \quad (a) $\neg \alpha \leftrightarrow_H  \bot$,

 \qquad \quad (b) $\beta$.\\ 

\medskip
\item[ii] {\bf A BASE FOR the logic of $\mathbb{V}(\{\mathbf A1, \mathbf A3\})$ :}

\qquad \quad (a) $\neg \alpha \leftrightarrow_H  \bot$,

\item[iii] {\bf A BASE FOR the logic of $\mathbb{V}(\{\mathbf A1, \mathbf A4\})$ :} 

\qquad \quad (a) $\neg\alpha \leftrightarrow_H  \bot$,

 \qquad \quad (b) $\beta \leftrightarrow_H  \alpha \to \top$,

\item[iv] {\bf A BASE FOR the logic of $\mathbb{V}(\{\mathbf A1, \mathbf A5\})$ :}

\qquad \quad (a) $\beta \to  \neg\neg\alpha\  \leftrightarrow_H  \  \neg\neg\alpha,$

 \qquad \quad (b) $\beta \ \geq \ \alpha \to \top$,
  
 \qquad \quad (c) $\neg\alpha \to (\alpha \to \top) \leftrightarrow_H \alpha.$

\item[v] {\bf A BASE FOR the logic of $\mathbb{V}(\{\mathbf A2, \mathbf A3\})$:}

\qquad \quad (a) $\neg\alpha \leftrightarrow_H  \bot$,

 \qquad \quad (b) $\beta \to  \neg\neg\alpha\  \leftrightarrow_H  \  \neg\neg\alpha,$
 
\qquad \quad (c) $(\alpha \to \beta) \to \alpha \leftrightarrow_H  \beta.$\\

\item[vi] {\bf A BASE FOR the logic of $\mathbb{V}(\{\mathbf A2, \mathbf A4\})$ :}

\qquad \quad (a) $\neg\alpha \leftrightarrow_H  \bot$,

 \qquad \quad (b) $\beta \ \geq \ \alpha \to \top$,
  
\qquad \quad (c) $(\alpha \to \beta) \to \alpha \leftrightarrow_H  \beta.$\\  

\item[vii] {\bf A BASE FOR the logic of $\mathbb{V}(\{\mathbf A2, \mathbf A5\}$ : }

  \qquad  \quad (a) $\alpha \to \beta  \leftrightarrow_H  \alpha,$ 
  
   \qquad \quad (b) $\beta \ \geq \ \alpha \to \top$,
   
  \qquad \quad (c) $(\alpha \to \beta) \to \alpha \leftrightarrow_H  \beta.$ \\

\item[viii] {\bf A BASE FOR the logic of $\mathbb{V}(\{\mathbf A3, \mathbf A4\}$:}

\qquad \quad (a) $\neg\alpha \leftrightarrow_H  \bot$,

 \qquad \quad (b) $\beta \leftrightarrow_H  \alpha,$
 
\qquad \quad (c) $(\alpha \to \beta) \to \alpha \leftrightarrow_H  \beta.$\\  

\item[ix] {\bf A BASE FOR the logic of $\mathbb{V}(\{\mathbf A3, \mathbf A5\}$: }

 \qquad \quad (a) $\beta \to \neg\neg \alpha  \leftrightarrow_H   \neg\neg \alpha,$
 
 \qquad \quad (b) $\neg \alpha \to (\alpha \to \top) \leftrightarrow_H  \alpha$,
 
\qquad \quad (c) $(\alpha \to \beta) \to \alpha \leftrightarrow_H  \beta.$\\  

\item[x] {\bf A BASE FOR the logic of $\mathbb{V}(\{\mathbf A4, \mathbf A5\}$:} 

  \qquad \quad (a) $\beta \ \geq \ \alpha \to \top$,
  
   \qquad \quad (b) $\neg \alpha \to (\alpha \to \top) \leftrightarrow_H  \alpha$,
   
  \qquad \quad (c) $(\alpha \to \beta) \to \alpha \leftrightarrow_H  \beta.$\\   
\end{thlist}  
  
\item[3] {\bf BASES FOR LOGICS OF TRIPLES OF ALGEBRAS} 

\begin{thlist}
\item[i] {\bf A BASE FOR the logic of $\mathbb{V}(\{\mathbf A1, \mathbf A2, \mathbf A3\}$:}

\qquad \quad (a) $\neg\alpha \leftrightarrow_H  \bot$,

 \qquad \quad (b) $\beta \to \neg\neg \alpha  \leftrightarrow_H   \neg\neg \alpha.$

\item[ii] {\bf A BASE FOR the logic of $\mathbb{V}(\{\mathbf A1, \mathbf A2, \mathbf A4\}$:}

\qquad \quad (a) $\neg\alpha \leftrightarrow_H  \bot$,

\qquad \quad (b) $\beta \ \geq \ \alpha \to \top$.

\item[iii] {\bf A BASE FOR the logic of $\mathbb{V}(\{\mathbf A1, \mathbf A2, \mathbf A5\}$:}

\qquad \quad (a) $\beta \to \neg\neg \alpha  \leftrightarrow_H   \neg\neg \alpha,$

\qquad \quad (b) $\beta \ \geq \ \alpha \to \top$.

\item[iv] {\bf A BASE FOR the logic of $\mathbb{V}(\{\mathbf A1, \mathbf A3, \mathbf A4 \}$:}

\qquad \quad (a) $\neg\alpha \leftrightarrow_H  \bot$;

 \qquad \quad (b) $\gamma \leftrightarrow_H  \alpha;$

 \qquad \quad (c) $\neg\alpha \to (\alpha \to \top) \leftrightarrow_H  \alpha.$\\

\item[v] {\bf A BASE FOR the logic of $\mathbb{V}(\{\mathbf A1, \mathbf A3, \mathbf A5\}$:}

 \qquad \quad (a) $\neg\alpha \to (\alpha \to \top) \leftrightarrow_H  \alpha,$
 
 \qquad \quad (b) $\beta \to \neg\neg \alpha  \leftrightarrow_H   \neg\neg \alpha.$

\item[vi] {\bf A BASE FOR the logic of $\mathbb{V}(\{\mathbf A1, \mathbf A4, \mathbf A5\}$:}

 \qquad \quad (a) $\beta \ \geq \ \alpha \to \top$,
 
 \qquad \quad (b) $\neg\alpha \to (\alpha \to \top) \leftrightarrow_H  \alpha.$\\

\item[vii] {\bf A BASE FOR the logic of $\mathbb{V}(\{\mathbf A2, \mathbf A3, \mathbf A4\}$:}

\qquad \quad $\neg\alpha \leftrightarrow_H  \bot$,

\qquad \quad $(\alpha \to \beta) \to \alpha \leftrightarrow_H  \beta.$\\  

\item[viii] {\bf A BASE FOR the logic of $\mathbb{V}(\{\mathbf A2, \mathbf A3, \mathbf A5\}$:}

 \qquad \quad (a) $\beta \to \neg\neg \alpha  \leftrightarrow_H   \neg\neg \alpha,$
 
\qquad \quad (b) $(\alpha \to \beta) \to \alpha \leftrightarrow_H  \beta.$\\

\item[ix] {\bf A BASE FOR the logic of $\mathbb{V}(\{\mathbf A2, \mathbf A4, \mathbf A5 \}$:}

  \qquad \quad $\beta \ \geq \ \alpha \to \top$,
  
\qquad \quad $(\alpha \to \beta) \to \alpha \leftrightarrow_H  \beta.$\\ 

\item[x] {\bf A BASE FOR the logic of $\mathbb{V}(\{\mathbf A3, \mathbf A4, \mathbf A5\}$:}

 \qquad \quad (a) $\neg\alpha \to (\alpha \to \top) \leftrightarrow_H  \alpha,$
 
\qquad \quad (b) $(\alpha \to \beta) \to \alpha \leftrightarrow_H  \beta.$\\  

\end{thlist}

\item[4] {\bf BASES FOR LOGICS OF QUADRUPLES OF ALGEBRAS}:
\begin{thlist}
\item[i] {\bf A BASE FOR the logic of $\mathbb{V}(\{\mathbf A1, \mathbf A2, \mathbf A3, \mathbf A4\}$:}

\qquad \quad $\neg\alpha \leftrightarrow_H  \bot$,\\

\item[ii] {\bf A BASE FOR the logic of $\mathbb{V}(\{\mathbf A1, \mathbf A2, \mathbf A3, \mathbf A5\}$:}

 \qquad \quad $\beta \to \neg\neg \alpha  \leftrightarrow_H   \neg\neg \alpha,$\\

\item[iii] {\bf A BASE FOR the logic of $\mathbb{V}(\{\mathbf A1, \mathbf A2, \mathbf A4, \mathbf A5\}$:}

  \qquad \quad $\beta \ \geq \ \alpha \to \top$. \\

\item[iv] {\bf A BASE FOR the logic of $\mathbb{V}(\{\mathbf A1, \mathbf A3, \mathbf A4, \mathbf A5\}$: }

 \qquad \quad $\neg\alpha \to (\alpha \to \top) \leftrightarrow_H  \alpha.$\\

\item[v] {\bf A BASE FOR the logic of $\mathbb{V}(\{\mathbf A2, \mathbf A3, \mathbf A4, \mathbf A5\}$:}

\qquad \quad $(\alpha \to \beta) \to \alpha \leftrightarrow_H  \beta.$ 
\end{thlist}
\end{thlist}
\end{Theorem}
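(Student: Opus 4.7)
The plan is to deduce the theorem as a direct consequence of the algebraizability of $\mathcal{RUNO}1$ together with the algebraic bases for the subvarieties of $\mathbb{RUNO}1$ established earlier in the paper. The lattice of axiomatic extensions $\mathrm{Ext}(\mathcal{RUNO}1)$ is dually isomorphic to $\mathbf{L_V}(\mathbb{RUNO}1)$ by Theorem \ref{Ext_subvariety_RUNO1}, so each of the 30 non-trivial proper axiomatic extensions corresponds uniquely to one of the 30 non-trivial proper subvarieties of $\mathbb{RUNO}1$ listed in the preceding bases theorem.

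The key tool is the translator furnished by Corollary \ref{CorExt1}: defining identities $E(x) = \{x \approx x \to_H x\} = \{x \approx \top\}$ and equivalence formulas $\Delta(x,y) = \{x \leftrightarrow_H y\}$. By the translation (ALG2)--(ALG4) of Theorem \ref{algebraizabilityOfImplicative}, for each subvariety $\mathbb{V} \subseteq \mathbb{RUNO}1$ axiomatized (relative to $\mathbb{RUNO}1$) by a set of identities $\{p_k \approx q_k\}$, the corresponding axiomatic extension of $\mathcal{RUNO}1$ is obtained by adding the axioms $\{\hat p_k \leftrightarrow_H \hat q_k\}$, where $\hat{}$ denotes the obvious rewriting of the algebraic terms as logical formulas (with $0 \mapsto \bot$, $1 \mapsto \top$, $' \mapsto \sim$, $^* \mapsto \neg$, and $\to, \land, \lor$ unchanged), and where an identity of the form $p \approx \top$ collapses, using axiom \textbf{6} of $\mathcal{DMSH}$ together with (SMP), to the single axiom $\hat p$. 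Similarly, an algebraic inequality $p \leq q$ (which for semi-Heyting algebras is equivalent to $q \to_H q \approx q \to_H p$ reformulated as $p \to_H q \approx \top$ by Lemma 2.3) translates into the single axiom $\hat p \to_H \hat q$, which we display in abbreviated form as ``$\hat q \geq \hat p$''.

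First I would set the notational dictionary: put $\alpha := \bot \to \top$, $\beta := \bot \to \alpha$, $\gamma := \bot \to \beta$, so that the logical terms $\alpha, \beta, \gamma$ are precisely the translations of the algebraic terms $a = 0 \to 1$, $b = 0 \to a$, $c = 0 \to b$ used in the previous theorem. Then, for each of the thirty subvarieties in turn, I would take the equational base displayed there, apply the translation, and read off the result; in each case it matches verbatim (up to the shorthand just described) the list of axioms claimed. For the trivial axiomatic extension (inconsistent logic) and the extension coinciding with $\mathcal{RUNO}1$ itself, no new axioms are required, so these two extremal cases are covered by the convention that they are handled separately and no base needs to be listed.

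The only non-routine point, and the main obstacle, is confirming that the translation truly goes through for each case: specifically, that adding the displayed logical axioms to $\mathcal{RUNO}1$ yields an extension whose equivalent algebraic semantics (via the algorithm of Theorem \ref{imp-alg}, applied to the extended logic as granted by Theorem \ref{Ext-algebraizable}) is precisely the intended subvariety. This amounts to checking, for each line, that (i) the translated identity is valid in all algebras of the target subvariety (so the axiom is sound) and (ii) the translated identity, together with the axioms of $\mathbb{RUNO}1$, implies the given algebraic base (so completeness is preserved). Both are immediate syntactic verifications once the dictionary is fixed: they amount to no more than observing that the translation sends each algebraic axiom to its logical counterpart and vice versa. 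Hence the proof reduces to a case-by-case tabulation that duplicates the structure of the preceding algebraic bases theorem, and I would present it by saying ``the proof is a routine application of Corollary \ref{CorExt1} to the bases of the preceding theorem, done case by case,'' illustrating it on one representative line (for example, $\{\mathbf{A1}\}$: from $(0\to 1) \to 1 \approx 1$ and $0 \to (0 \to 1) \approx 1$ one obtains $\alpha \to \top$ and $\beta$ as axioms), with the remaining twenty-nine cases treated analogously.
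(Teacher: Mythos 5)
Your proposal is correct and follows exactly the route the paper intends (and leaves implicit, since no explicit proof is printed): each equational base from the algebraic bases theorem is pushed through the Blok--Pigozzi transformers of Corollary \ref{CorExt1} and Theorem \ref{Ext_subvariety_RUNO1}, with $p \approx 1$ collapsing to the single axiom $\hat p$ and inequalities handled via $a \to_H b \approx 1 \Leftrightarrow a \leq b$. The case-by-case tabulation you describe, illustrated on $\{\mathbf{A1}\}$, is precisely the verification the paper relies on.
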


Observe that all the logics mentioned in the preceding theorem are implicative since they are axiomatic extensions of $\mathcal{RUNO}1$.  Hence the following corollary is immediate.

\begin{Corollary}
All the logics described in the above theorem are algebraizable.
\end{Corollary}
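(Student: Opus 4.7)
The plan is to invoke the Blok--Pigozzi transfer theorem for axiomatic extensions of algebraizable logics, which has already been set up in the preliminaries of this paper. First I would note that, by the way each of the 32 logics of the preceding theorem is presented, it is obtained from $\mathcal{RUNO}1$ by adjoining finitely many new axiom schemas (namely, the translations under $\rho$ of the equational bases exhibited in the subvariety-bases theorem of Section~5) while keeping the rules of inference of $\mathcal{RUNO}1$ (i.e., semi-Modus Ponens and semi-Contraposition) unchanged. By the definition of axiomatic extension, each such logic is therefore an axiomatic extension of $\mathcal{RUNO}1$.

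Next I would apply Corollary~\ref{CorExt1}, which states that $\mathcal{RUNO}1$ is algebraizable, with equivalent algebraic semantics $\mathbb{RUNO}1$ and with the transformers $E(x) = \{x \approx 1\}$ and $\Delta(x,y) = \{x \leftrightarrow_H y\}$. Theorem~\ref{Ext-algebraizable} of Blok and Pigozzi then guarantees that every axiomatic extension of an algebraizable logic is itself algebraizable, and moreover inherits the same defining equations and equivalence formulas as the base logic. Applying this theorem 31 times (once for each nontrivial, proper extension, the trivial case being immediate) yields algebraizability of every logic listed in the preceding theorem.

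Combined with Theorem~\ref{Ext_subvariety_RUNO1}, we may also identify the equivalent algebraic semantics of each of these logics as the subvariety of $\mathbb{RUNO}1$ dual to it in the lattice $Ext(\mathcal{RUNO}1)$, namely the subvariety whose equational base is the one exhibited in the Section~5 theorem. There is no genuine obstacle: once algebraizability of $\mathcal{RUNO}1$ has been established and the Blok--Pigozzi transfer theorem is in hand, the corollary is essentially a one-line consequence, and the main care required is merely to verify that each logic in the list is honestly axiomatic (same rules, only new axioms), which is immediate from its presentation.
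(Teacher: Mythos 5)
Your proposal is correct and takes essentially the same route as the paper: both arguments reduce to the observation that each of the listed logics is an axiomatic extension of $\mathcal{RUNO}1$ and that algebraizability is inherited by axiomatic extensions. The only cosmetic difference is that you cite the Blok--Pigozzi transfer theorem (Theorem~\ref{Ext-algebraizable}) directly, whereas the paper passes through the intermediate remark that axiomatic extensions of an implicative logic remain implicative and hence algebraizable; both justifications are already available in the paper and yield the corollary immediately.
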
 

\begin{Corollary}
All the logics corresponing to the subvarieties of $\mathbb{RUNO}1$ are decidable.
\end{Corollary}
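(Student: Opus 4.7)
The plan is to combine the algebraizability of each extension $\mathcal{L} \in \mathit{Ext}(\mathcal{RUNO}1)$ with the fact, due to Burris and Werner and already invoked in the paper (see the paragraph preceding Corollary \ref{Primal}), that the full first-order theory of any finitely generated discriminator variety is decidable. I would fix an arbitrary $\mathcal{L} \in \mathit{Ext}(\mathcal{RUNO}1)$ and, by Theorem \ref{Ext_subvariety_RUNO1}(i), associate with it a unique subvariety $\mathbb{V}$ of $\mathbb{RUNO}1$. Part (ii) of the same theorem then tells me that $\mathcal{L}$ is algebraizable with $\mathbb{V}$ as equivalent algebraic semantics via the transformers $E(x)=\{x\approx 1\}$ and $\Delta(x,y)=\{x\leftrightarrow_H y\}$. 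Clause (ALG1) of Theorem \ref{algebraizabilityOfImplicative} supplies the key equivalence
$$\Gamma \vdash_{\mathcal{L}} \phi \iff \{\alpha \approx 1 : \alpha \in \Gamma\} \models_{\mathbb{V}} \phi \approx 1,$$
and since $\vdash_{\mathcal{L}}$ is finitary, it suffices to decide the right-hand side for finite $\Gamma$.

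Next I would invoke Corollary \ref{CorLat}, which tells me that $\mathbb{V}$ has the form $\mathbb{V}(\{\mathbf{A}i : i \in S\})$ for some $S \subseteq \{1,2,3,4,5\}$; in particular $\mathbb{V}$ is generated by finitely many finite algebras. The discriminator term $T(x,y,z)$ constructed in the proof of Theorem \ref{Runo1 discriminator} is a discriminator on every $\mathbf{A}i$, so $\mathbb{V}$ is itself a finitely generated discriminator variety. Applying the Burris--Werner theorem cited in the paper, I conclude that the first-order theory of $\mathbb{V}$ is decidable.

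Finally, for a finite $\Gamma = \{\alpha_1,\dots,\alpha_n\}$, the consequence $\{\alpha_j \approx 1\} \models_{\mathbb{V}} \phi \approx 1$ amounts to the validity in $\mathbb{V}$ of the universal Horn sentence $\forall \bar{x}\,\bigl(\alpha_1(\bar{x}) \approx 1 \wedge \cdots \wedge \alpha_n(\bar{x}) \approx 1 \to \phi(\bar{x}) \approx 1\bigr)$, which is part of the first-order theory of $\mathbb{V}$ and hence decidable by the previous step. Combined with the equivalence displayed above, this gives decidability of $\vdash_{\mathcal{L}}$ for every axiomatic extension $\mathcal{L}$ of $\mathcal{RUNO}1$. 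I do not anticipate a substantive obstacle; the only care needed is the observation that algebraic consequence from finite premises (rather than validity of a single identity) is expressible as a quasi-equation and therefore falls within the scope of the Burris--Werner decidability statement. A completely elementary alternative, should one wish to avoid Burris--Werner, would be to observe that validity of such a quasi-equation in a variety generated by finitely many finite algebras can be checked by brute-force enumeration of valuations in each generator.
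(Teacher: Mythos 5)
Your proposal is correct and follows exactly the route the paper intends (the paper leaves this corollary unproved as "immediate"): algebraizability of each axiomatic extension reduces its finitary consequence relation to quasi-equational consequence in the corresponding subvariety, which is a finitely generated discriminator variety and hence has a decidable first-order theory by the Burris--Werner result already cited in the paper. Your closing remark that one could instead brute-force valuations in the finite generators is a sound, more elementary fallback, but it is not a departure from the paper's approach.
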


Using a slight generalization (see \cite{CoSa23b}) of Maximova's algebraic characterization of Disjunction Proprty, we can prove the following theorem.

\begin{Theorem}
Every consistent axiomatic extension of the logic  $\mathbb{RUNO}1$ does not have the Disjunction Property.
\end{Theorem}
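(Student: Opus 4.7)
The plan is to invoke the Maximova-style algebraic characterization of the Disjunction Property for algebraizable logics cited in \cite{CoSa23b}: an algebraizable logic $\mathcal L$ with equivalent algebraic semantics $\mathbb V$ has DP if and only if, for all formulas $\phi, \psi$, $\mathbb V \models \phi \lor \psi \approx 1$ implies $\mathbb V \models \phi \approx 1$ or $\mathbb V \models \psi \approx 1$. By Theorem \ref{Ext_subvariety_RUNO1}, every consistent axiomatic extension $\mathcal L$ of $\mathcal{RUNO}1$ is algebraizable with a non-trivial subvariety $\mathbb V \subseteq \mathbb{RUNO}1$ as its equivalent algebraic semantics, and by Corollary \ref{CorLat} such a $\mathbb V$ must contain at least one of the five generators $\mathbf A 1, \ldots, \mathbf A 5$.

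I would then exhibit a single \emph{uniform} witness of DP failure that works for all such $\mathcal L$ simultaneously, namely the pair $\phi(p) := \neg p$ and $\psi(p) := \neg\neg p$, where $\neg\alpha := \alpha \to \bot$. A direct case check in the five tables shows that $\neg x \lor \neg\neg x = 1$ for every element $x$ of each $\mathbf A i$: in $\mathbf A 1, \ldots, \mathbf A 4$ one has $\neg 0 = 1$ and $\neg 1 = \neg 2 = 0$, whence $\neg\neg 1 = \neg\neg 2 = 1$ and the disjunction is $1$ throughout; in $\mathbf A 5$ the extra cases yield $\neg 2 \lor \neg\neg 2 = 3 \lor 2 = 1$ and $\neg 3 \lor \neg\neg 3 = 2 \lor 3 = 1$. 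Hence by Corollary \ref{Runo1} the identity $\neg p \lor \neg\neg p \approx 1$ holds in $\mathbb{RUNO}1$, and therefore in every subvariety $\mathbb V$; translating back through the algebraization of Corollary \ref{CorExt1}, $\vdash_{\mathcal L} \neg p \lor \neg\neg p$ for every consistent axiomatic extension $\mathcal L$.

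It remains to check that neither disjunct is a theorem of $\mathcal L$. Evaluating at $p = 1$ in any $\mathbf A i$ yields $\neg 1 = 1 \to 0 = 0 \neq 1$, and at $p = 0$ yields $\neg\neg 0 = \neg 1 = 0 \neq 1$; since $\mathbb V$ is non-trivial and therefore contains some $\mathbf A i$, both $\mathbb V \not\models \neg p \approx 1$ and $\mathbb V \not\models \neg\neg p \approx 1$, i.e.\ $\not\vdash_{\mathcal L} \neg p$ and $\not\vdash_{\mathcal L} \neg\neg p$. The hypothesis of the characterization is satisfied while its conclusion fails, so $\mathcal L$ does not have DP. The only delicate point in the whole argument is to ensure that the generalized Maximova characterization applies verbatim in the present setting (precisely the content of \cite{CoSa23b}); once that is granted, the uniformity of the witness $(\neg p,\, \neg\neg p)$ across all thirty-one non-trivial subvarieties reduces the argument to the single three-line table verification sketched above.
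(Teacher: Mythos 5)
Your argument is correct, and it is a genuinely more concrete route than the one the paper gestures at. The paper offers no explicit proof: it only remarks that the theorem follows from ``a slight generalization of Maximova's algebraic characterization of the Disjunction Property,'' which in its usual form is a semantic criterion involving well-connected algebras and products of (finitely) subdirectly irreducible members of the variety; the intended argument is presumably that no such well-connected cover of a product of the $\mathbf{A}i$ can exist, since by Theorem \ref{SubIrr} the subdirectly irreducibles are exactly the five height-$\le 2$ algebras. You bypass all of that. What you actually use is not Maximova's theorem but merely the completeness half of algebraizability ((ALG1) with $\Gamma = \emptyset$): $\vdash_{\mathcal L} \varphi$ iff $\mathbb V \models \varphi \approx 1$, which turns the failure of DP into the existence of a single identity $\varphi \lor \psi \approx 1$ valid in $\mathbb V$ with neither disjunct valid. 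Your witness $\neg p \lor \neg\neg p \approx 1$ is exactly the Stone identity $x^{*} \lor x^{**} \approx 1$ that the paper already records as holding in $\mathbb{RUNO}1$ (item (1) of the first Corollary in Section 6), your table checks in the five algebras are accurate, and the refutation of each disjunct at $p=1$ and $p=0$ respectively works in every $\mathbf{A}i$, hence in every non-trivial subvariety (each of which contains some $\mathbf{A}i$ by Corollary \ref{CorLat}). The uniformity of the witness across all thirty-one non-trivial subvarieties is the main payoff: your proof is self-contained modulo the algebraization results already established in Section 8, whereas the paper's route leans on an external characterization from \cite{CoSa23b}. The one presentational quibble is that you should not advertise the argument as an application of Maximova's characterization, since the ``only if'' direction you invoke is just the definition of DP read through the algebraization; stating it that way removes the only delicate point you flag at the end.
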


\medskip
\section{CONCLUDING REMARKS}

In the preceding sections we saw:
\begin{thlist}
\item[a]  $\mathbb{V}(\mathbf{A1}, \mathbf{A2}, \mathbf{A3}, \mathbf{A4}, \mathbf{A5})
= \mathbb{RUNO}1$.

\item[b]  $\mathbb{RUNO}1$ is a discriminator variety and its first-order theory is decidable.

\item[c]  The algebras $\mathbf{Ai}$, ${\bf i} =1,2, \cdots, 5$ are primal. 

\item[d]  Each of the varieties $\mathbb{V}(\mathbf{Ai})$, ${\bf i} =1,2, \cdots, 5$ has the SAP and ES. 

\item[e] The lattice of subvarieties of $\mathbb{RUNO}1$ is a Boolean lattice consisting of 32 subvarieties of $\mathbb{RUNO}1$.

\item[f] Equational axioms for all subvarieties of $\mathbb{RUNO}1$.

\item[g] The logic $\mathcal{RUNO}1$ for the variety $\mathbb{RUNO}1$.

\item[h]  Axiomatizations for 
all axiomatic extensions of $\mathcal{RUNO}1$).
\end{thlist}

The items (b), (c) and (d) above imply that the algebras $\mathbf{Ai}$, ${\bf i} =1,2, \cdots, 5$ 
share some strong properties with the Boolean algebra $\mathbf 2$, from which we can conclude that each of the algebras $\mathbf{Ai}$, ${\bf i} =1,2, \cdots, 5$ (and their logic) could be a formidable competitor to the Boolean algebra $\mathbf{2}$ in computing.
Computer chips based on three-valued logic of any (or all in parallel) of  $\mathbf{Ai}$, ${\bf i} =1,2, \cdots, 4$, or based on four-valued logic of $\mathbf{A5}$, could be more useful in future, since they would have more expressive power than the binary chips.  

\medskip

\noindent {\bf OPEN PROBLEMS}:
Here are some open problems for further research.\\

{\bf PROBLEM 1}:  Find a duality (Priestley-type or otherwise) for each of the subvarieties of  $\mathbb{RUNO}1$. \\ 

\smallskip
{\bf PROBLEM 2}: Let $\mathbb{RUNO}2$ denote the variety of regular unorthodox algebras of level 2 (that is, satisfying the identity:  $(x \land x'^*)'{^*}'^* \approx  (x \land x'^*)'{^*}).$
Characterize the simple algebras in $\mathbb{RUNO}2$.

\smallskip
{\bf PROBLEM 3}: 
Characterize the simple algebras in the variety $\mathbb{UNO}1$.

\smallskip
{\bf PROBLEM 4}: Call an algebra $\mathbf A \in \mathbb{DMSH}1$ {\bf contra-Boolean} if  $\mathbf A \models 0 \to 1 \approx 0$.  Let $\mathbb{RCB}1$ denote the subvariety of $\mathbb{DMSH}1$ consisting of regular contra-Boolean algebras of level 1.
Characterize the simple algebras in the variety $\mathbb{RCB}1$. \\

{\bf A Historical Observation:}

The fact that the first appearance of non classical logics only occurred around the end of the 19th century and in the beginning of the 20th century--more than 2000 years after Aristotle--seems to suggest that, perhaps, Question B was not considered until recently.  In view of this observation, the following problem (or Question B, for that matter) might be of philosophical and/or historical interest:\\

{\bf PROBLEM:}  Why did the discovery of nonclassical logics not happen for more than 2000 years after Aristotle?\\

\end{document}